\theoremstyle{definition}
\newtheorem{theorem}{Theorem}[section]
\newtheorem{lemma}[theorem]{Lemma}
\newtheorem{proposition}[theorem]{Proposition}
\newtheorem{corollary}[theorem]{Corollary}
\newtheorem{definition}[theorem]{Definition}
\newtheorem{example}[theorem]{Example}
\newcommand{\mrmdd}{\mathrm{d}}
\newcommand{\perm}{\mathrm{Perm}}
\newcommand{\R}{\mathbb{R}}
\newcommand{\N}{\mathbb{N}}
\newcommand{\nQ}{\mathbb{Q}}
\newcommand{\E}{{\mathbf E}}
\renewcommand{\P}{{\mathbf P}}
\newcommand{\Q}{{\mathbf Q}}
\newcommand{\pR}{{\mathbf R}}
\newcommand{\one}{\mathbbm{1}}
\newcommand{\deq}{\stackrel{d}{=}}
\newcommand{\Z}{\mathbb{Z}}
\newcommand{\sB}{\mathcal{B}}
\newcommand{\sE}{\mathcal{E}}
\newcommand{\sM}{\mathcal{M}}
\newcommand{\sI}{\mathcal{I}}
\newcommand{\sS}{\mathcal{S}}
\newcommand{\sT}{\mathcal{T}}
\newcommand{\sF}{\mathcal{F}}
\newcommand{\sG}{\mathcal{G}}
\newcommand{\sA}{\mathcal{A}}
\newcommand{\sC}{\mathcal{C}}
\newcommand{\sD}{\mathcal{D}}
\newenvironment{benumber}{

\vspace{-.05in} \begin{enumerate}}{\end{enumerate}}
\begin{document}

\thispagestyle{empty}

\begin{center}
{\Large \bf Swap-invariant and exchangeable\\[.3em] random measures}\\

\vspace{.5cm}

{\large \scshape Felix~Nagel}\footnote{Email: felix.nagel@stat.unibe.ch}\\

\vspace{.2cm}

{\it Institute of Mathematical Statistics and Actuarial Science\\
University of Berne, Switzerland}

\vspace{.8cm}

\begin{minipage}{.85\textwidth}
\begin{center}
{\bf Abstract}
\end{center}
\vspace{-.3cm}
In this work we analyze the concept of swap-invariance, which is a weaker variant of exchangeability. A random vector $\xi$ in~$\R^n$ is called swap-invariant if $\,\E \,\big| \!\sum_j u_j \xi_j \big|\,$ is invariant under all permutations of $(\xi_1, \ldots, \xi_n)$ for each $u \in \R^n$. We extend this notion to random measures. For a swap-invariant random measure $\xi$ on a measure space $(S,\sS,\mu)$ the vector $(\xi(A_1), \ldots, \xi(A_n))$ is swap-invariant for all disjoint $A_j \in \sS$ with equal $\mu$-measure. Various characterizations of swap-invariant random measures and connections to exchangeable ones are established. We prove the ergodic theorem for swap-invariant random measures and derive a representation in terms of the ergodic limit and an exchangeable random measure. Moreover we show that diffuse swap-invariant random measures on a Borel space are trivial. As for random sequences two new representations are obtained using different ergodic limits.

\vspace{.2cm}

{\bf Keywords:} ergodic theorem; exchangeability; random measure; swap-invariance; zonoid equivalence.

\vspace{.2cm}

{\bf AMS MSC 2010:} 28D99; 37A50; 60F25; 60G09; 60G57.
\end{minipage}
\end{center}

\vspace{.5cm}

\section{Introduction}

Two integrable random vectors $\xi$ and $\eta$ in~$\R^n$ are called {\em zonoid equivalent} if $\E \left| \langle u, \xi \rangle \right| = \E \left| \langle u, \eta \rangle \right|$ for every $u \in \R^n$, where $\langle \,\cdot\,, \,\cdot\, \rangle$ denotes the Euclidean inner product. A vector~$\xi$ of integrable random variables is called {\em swap-invariant} if $\xi \circ \pi$ and $\xi$ are zonoid equivalent for all permutations $\pi$ of $\left\{ 1, \ldots, n \right\}$. Swap-invariance is weaker than exchangeability. Both exchangeability and swap-invariance are extended to random sequences by requiring the respective property for all finite-dimensional distributions. Swap-invariant sequences are introduced and analyzed in~\cite{mss:14}. An important property of a swap-invariant sequence $\xi$ is that $n^{-1} \sum_{j = 1}^n \xi_j \to X$ almost surely as $n \to \infty$ for some random variable~$X$ (cf.\  \protect{\cite[Theorem 17]{mss:14}}). In contrast to the ergodic theorem for integrable exchangeable sequences (see for example \protect{\cite[Theorem 10.6]{kallenberg:fou}}), this convergence is not necessarily in~$L^1$.

The definitions of exchangeability and swap-invariance can both be extended to random measures on a measure space $(S, \sS, \mu)$ where $\mu$ is a deterministic reference measure. Whereas exchangeable random measures are well known (see e.g.\ \cite{kallenberg:sym} and \protect{\cite[Chapter 10]{kallenberg:fou}}), our definition of swap-invariance is new, and it is strictly weaker than exchangeability. If a random measure $\xi$ on a measure space $(S,\sS,\mu)$ is $\mu$-exchangeable, then the vector $(\xi(A_1), \ldots, \xi(A_n))$ is exchangeable for all disjoint $A_j \in \sS$ with equal $\mu$-measure; if $\xi$ is $\mu$-swap-invariant, then this vector is merely swap-invariant. Exchangeable or swap-invariant sequences arise as special cases where $S = \N$ and $\mu$ is the counting measure.

We show the ergodic theorem for swap-invariant random measures, that is
\[
\frac{\xi(A_n)}{\mu(A_n)} \to X \quad\quad \mbox{a.s.\ as $n \to \infty$}
\]
for some integrable random variable~$X$. Here $(A_n)_{n \geq 1}$ is any increasing sequence of measurable sets such that $\mu(A_n) < \infty$ and $\mu(A_n) \to \infty$ as $n \to \infty$. The ergodic limit always exists if $\mu$ is atomless, and it is independent of the sequence of sets under additional assumptions. As an important consequence we obtain the representation
\begin{equation}
\label{intro-repr}
\xi = X \eta
\end{equation}
where the random measure $\eta$ is exchangeable under a certain probability measure.

In Section~\ref{sec-rm} we first give five characterizations of swap-invariant random measures and their counterparts in the exchangeable case. Then a construction method for swap-invariant random measures is provided, which is based on a change of the probability measure. We give an example of a swap-invariant non-exchangeable point process derived from a Poisson process. For finite $\mu$ we demonstrate that a swap-invariant random measure can be represented as $\xi = \xi(S) \, \eta$ where $\eta$ is exchangeable under a certain probability measure. This formula resembles~\eqref{intro-repr}; however in the case of finite $\mu$ no ergodic limit is involved. If the reference measure $\mu$ is $\sigma$-finite and atomless, and if the intensity measure~$\E \, \xi$ is $\sigma$-finite, then $\E \, \xi = c \mu$ for some $c \geq 0$. Finally it is shown that a swap-invariant diffuse random measure on a Borel space has the form $\alpha \mu$ where $\alpha \geq 0$ is a random variable. This fact is known for exchangeable random measures.

Section~\ref{sec-sisequ} of this paper is devoted to a detailed analysis of the connection between swap-invariant and exchangeable sequences. These results are the basis for the proof of~\eqref{intro-repr} and are interesting by themselves. First it is shown that each swap-invariant sequence of random variables with only two positive values is exchangeable. We then present a simple construction method for swap-invariant sequences, which consists in a multiplication of a given swap-invariant or exchangeable sequence by a random factor and a simultaneous change of the probability measure. In the remainder of Section~\ref{sec-sisequ} we prove that two large classes of swap-invariant sequences can always be represented in this way, namely by using an ergodic limit as the random factor; in one case we use the ergodic limit of the sequence itself, in the other case the limit of $p$-norms of the means.

Finally in Section~\ref{sec-erg} the ergodic theorem for swap-invariant random measures is shown and \eqref{intro-repr} is derived. We also prove a variant of this theorem for the special case of exchangeable random measures.

\section{Swap-invariant random measures}
\label{sec-rm}

\subsection{Preliminaries}

Let $(\Omega, \sF, \P)$ be a probability space and $(S,\sS)$ a measurable space. A {\em random measure $\xi$ on~$S$} is a map $\xi: \Omega \times \sS \to \overline{\R}_+$ such that $\xi ( \,\cdot\, , M)$ is a $\overline{\R}_+$-valued random element for each $M \in \sS$, and $\xi (\omega, \,\cdot\, )$ is a measure on~$S$ for each $\omega \in \Omega$. The {\em intensity measure} of~$\xi$ is $\E \, \xi (M)$ for $M \in \sS$.

If $(A,\sA)$ and $(B,\sB)$ are measurable spaces, $\mu$~is a measure on~$A$, and $f : A \to B$ is measurable, then the image of $\mu$ under~$f$ is denoted by $\mu \circ f^{-1}$, which is a measure on~$B$. If $\xi$ is a random measure on~$A$, $\xi \circ f^{-1}$ is defined pointwise for each~$\omega$, and therefore is a random measure on~$B$.

A random measure $\xi$ on~$S$ is called {\em $\sigma$-finite} if  there exists a fixed measurable partition $(S_j)_{j \geq 1}$ of~$S$ such that $\xi (S_j) < \infty$ almost surely for every $j \geq 1$. An {\em atom} $M$ of a measure $\mu$ is a measurable set such that $\mu(M) > 0$ and for each measurable $C \subset M$ either $\mu(C) = 0$ or $\mu(C) = \mu(M)$ holds; $\mu$~is called {\em atomless} if it has no atoms. If $(A,\sA)$ and $(B,\sB)$ are measurable spaces, then $f : A \to B$ is called a {\em Borel isomorphism} if $f$ is a bijection and both $f$ and $f^{-1}$ are measurable. A measurable space $(S,\sS)$ is called a {\em Borel space} if $S$ is Borel isomorphic to a Borel subset of~$([0,1], \sB[0,1])$. A measure $\mu$ on a Borel space~$S$ is called {\em diffuse} if $\mu (\left\{s\right\}) = 0$ for each $s \in S$.

For $n \geq 1$ we write $\perm(n)$ for the family of permutations of~$\left\{1, \ldots, n \right\}$.

\subsection{Exchangeability}
\label{subsec-exch-rm}

The definition of exchangeable random measures relies on the following lemma.

\begin{lemma}
\label{exch-random}
Let $\xi$ be a random measure on a measure space $(S,\sS,\mu)$. Consider the following statements:
\begin{benumber}
\item \label{exch-random-twosets} $( \xi (A_1), \ldots, \xi (A_n) ) \deq ( \xi (B_1), \ldots, \xi (B_n) )$ for any disjoint $A_1\,, \ldots, A_n \in \sS$, $n \geq 1$, and any disjoint $B_1\,, \ldots, B_n \in \sS$ with $\mu (A_j) = \mu (B_j)$ for $1 \leq j \leq n$.
\item \label{exch-random-twofunctions} $\int f \, d\xi \,\deq\, \int g \, d\xi\,$ for each two measurable functions $f, g : S \to \R_+$ with $\mu \circ f ^{-1} = \mu \circ g^{-1}$.
\item \label{exch-random-twotrans} $\xi \circ f^{-1} \deq \xi \circ g^{-1}$ for each two measurable functions $f, g : S \to \R_+$ with $\mu \circ f ^{-1} = \mu \circ g^{-1}$.
\item \label{exch-random-onetrans} $\xi \circ f^{-1} \deq \xi\,$ for each measurable function $f : S \to S$ with $\mu \circ f^{-1} = \mu$.
\item \label{exch-random-perm} There exists a sequence $c_k \downarrow 0$ such that for each $k \geq 1$, $n \geq 1$, any disjoint $A_1\,, \ldots, A_n \in \sS$ with $\mu (A_j) = c_k$ for $1 \leq j \leq n$, and each $\pi \in \perm(n)$, we have
\begin{equation*}
\big( \xi (A_1), \ldots, \xi (A_n) \big) \,\deq\, \big( \xi (A_{\pi(1)}), \ldots, \xi (A_{\pi(n)})\big) \,.
\end{equation*}
\end{benumber}
Statements \eqref{exch-random-twosets}, \eqref{exch-random-twofunctions}, and \eqref{exch-random-twotrans} are equivalent, and they imply \eqref{exch-random-onetrans} and~\eqref{exch-random-perm}. If $\mu$ is atomless and $\sigma$-finite, \mbox{$\mu (S) = \infty$}, and $\xi$ is $\sigma$-finite, then \eqref{exch-random-perm} implies~\eqref{exch-random-twosets}. If, in addition, $(S,\sS)$~is Borel, then also \eqref{exch-random-onetrans} implies~\eqref{exch-random-twosets}.
\end{lemma}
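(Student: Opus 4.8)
The plan is first to prove the three equivalences via the cycle \eqref{exch-random-twosets} $\Rightarrow$ \eqref{exch-random-twotrans} $\Rightarrow$ \eqref{exch-random-twofunctions} $\Rightarrow$ \eqref{exch-random-twosets}, and then to read off \eqref{exch-random-onetrans} and \eqref{exch-random-perm} as specializations. For \eqref{exch-random-twosets} $\Rightarrow$ \eqref{exch-random-twotrans}, given $f,g$ with $\mu\circ f^{-1}=\mu\circ g^{-1}$ I would test $\xi\circ f^{-1}$ and $\xi\circ g^{-1}$ on disjoint Borel sets $C_1,\dots,C_m\subset\R_+$: their preimages are disjoint with $\mu(f^{-1}(C_i))=(\mu\circ f^{-1})(C_i)=(\mu\circ g^{-1})(C_i)=\mu(g^{-1}(C_i))$, so \eqref{exch-random-twosets} gives $(\xi(f^{-1}(C_i)))_i\deq(\xi(g^{-1}(C_i)))_i$; as finite-dimensional distributions over disjoint sets determine the law of a random measure, \eqref{exch-random-twotrans} follows. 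Then \eqref{exch-random-twotrans} $\Rightarrow$ \eqref{exch-random-twofunctions} holds because $\int f\,d\xi=\int x\,(\xi\circ f^{-1})(dx)$ is a fixed measurable functional of $\xi\circ f^{-1}$. For \eqref{exch-random-twofunctions} $\Rightarrow$ \eqref{exch-random-twosets} I would apply \eqref{exch-random-twofunctions} to the step functions $f=\sum_j t_j\one_{A_j}$ and $g=\sum_j t_j\one_{B_j}$ with distinct positive $t_j$; then $\mu\circ f^{-1}=\mu\circ g^{-1}$ (the masses at each $t_j$ match by hypothesis and the masses at $0$ match since $\mu(S\setminus\bigcup_j A_j)=\mu(S\setminus\bigcup_j B_j)$ in $[0,\infty]$), so $\sum_j t_j\xi(A_j)\deq\sum_j t_j\xi(B_j)$, and letting $t$ range over a dense set yields equality of the joint Laplace transforms, which is \eqref{exch-random-twosets}. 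Finally \eqref{exch-random-onetrans} and \eqref{exch-random-perm} are \eqref{exch-random-twosets} applied to $f^{-1}(C_i)$ for $f:S\to S$ with $\mu\circ f^{-1}=\mu$, and to permutations of equal-measure disjoint sets.

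For \eqref{exch-random-perm} $\Rightarrow$ \eqref{exch-random-twosets} under the standing assumptions I would proceed in stages. Using that $\mu$ is atomless with $\mu(S)=\infty$, any two finite families of disjoint sets all of measure $c_k$ can be compared by interposing a third family disjoint from both, so by \eqref{exch-random-perm} their $\xi$-vectors share one exchangeable law depending only on the cardinality. Since, by atomlessness, a set of measure $m c_k$ splits into $m$ disjoint $c_k$-pieces, the statement \eqref{exch-random-twosets} for families whose measures are common integer multiples of a single $c_k$ reduces to matching block-sums of one such exchangeable vector and therefore holds. For arbitrary measures I would approximate, writing $\mu(A_j)=m_{j,k}c_k+r_{j,k}$ with $0\le r_{j,k}<c_k$, splitting off a main part of measure $m_{j,k}c_k$ and a remainder of measure $r_{j,k}$ on both sides, applying the integer-multiple case to the main parts, and letting $k\to\infty$.

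The main obstacle is to show the remainders are negligible, i.e.\ that $\xi(D)\to0$ in probability as $\mu(D)\to0$ along such sets. I would reduce this to showing that the common law $\nu_k$ of $\xi(E)$ over sets $E$ with $\mu(E)=c_k$ satisfies $\nu_k((\eps,\infty])\to0$ for each $\eps>0$, since every remainder is dominated by $\xi(E)$ for some $E\supset D$ of measure exactly $c_k$. To prove $\nu_k\Rightarrow\delta_0$ I would fix a set $T$ with $0<\mu(T)<\infty$ on which $\xi$ is a.s.\ finite (obtained by refining the $\sigma$-finiteness partitions of $\mu$ and $\xi$) and partition $T$ into $N_k=\lfloor\mu(T)/c_k\rfloor\to\infty$ disjoint $c_k$-pieces: at most $\xi(T)/\eps$ of these can have $\xi$-value exceeding $\eps$, so by exchangeability $\nu_k((\eps,\infty])=\P(\xi(\text{piece})>\eps)\le\E[\min(1,\xi(T)/(N_k\eps))]\to0$ by dominated convergence. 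With the remainders thus shown to vanish in probability, a Slutsky-type argument combined with the integer-multiple equality at each $k$ delivers \eqref{exch-random-twosets} in the limit.

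For \eqref{exch-random-onetrans} $\Rightarrow$ \eqref{exch-random-twosets} with $(S,\sS)$ Borel, the plan is to realize the comparison by a single measure-preserving endomorphism. Given disjoint $A_1,\dots,A_n$ and disjoint $B_1,\dots,B_n$ with $\mu(A_j)=\mu(B_j)$, the Borel isomorphism theorem for atomless $\sigma$-finite standard measure spaces provides measure isomorphisms $A_j\to B_j$ and $S\setminus\bigcup_j A_j\to S\setminus\bigcup_j B_j$ (the latter two sets having equal measure in $[0,\infty]$), which glue to a measurable $f:S\to S$ with $\mu\circ f^{-1}=\mu$ and $f^{-1}(B_j)=A_j$ for each $j$. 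Then \eqref{exch-random-onetrans} gives $\xi\circ f^{-1}\deq\xi$, whence $(\xi(A_j))_j=(\xi(f^{-1}(B_j)))_j\deq(\xi(B_j))_j$. The delicate point is to achieve $f^{-1}(B_j)=A_j$ exactly, rather than only modulo $\mu$-null sets, so that no uncontrolled $\xi$-mass is lost; Borelness is precisely what makes the requisite isomorphisms available, and the null-set bookkeeping is where I expect to spend the most care.
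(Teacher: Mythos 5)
Your equivalence cycle (i)$\Rightarrow$(iii)$\Rightarrow$(ii)$\Rightarrow$(i) is essentially the paper's argument; the only cosmetic difference is that you close the loop with joint Laplace transforms at points with distinct positive coordinates, where the paper tests $\sum_j u_j\xi(A_j)$ against arbitrary $u\in\R_+^n$ and invokes its Cram\'{e}r--Wold lemma for $\overline{\R}_+^n$ (Lemma~\ref{cramwoldinf}); these are interchangeable. Where you genuinely diverge is in \eqref{exch-random-perm}$\Rightarrow$\eqref{exch-random-twosets}. You decompose each $A_j$ into a main part of measure $m_{j,k}c_k$ plus a remainder and then work to kill the remainder in probability, via the pigeonhole bound $\nu_k((\eps,\infty])\le\E\min(1,\xi(T)/(N_k\eps))\to0$ on a set $T$ of finite $\mu$- and a.s.\ finite $\xi$-measure; this is correct (the well-definedness of the common law $\nu_k$ by interposing a third family is exactly right, and the Slutsky step goes through in $\overline\R_+$), and it is where the $\sigma$-finiteness of $\xi$ enters for you. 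The paper avoids this entire estimate: its Lemma~\ref{setapprox} produces \emph{increasing} inner approximations $A_j^N\uparrow A_j$ and $B_j^N\uparrow B_j$ with the \emph{same} integer multiplicities $m_{jN}$ on both sides, so that $\xi(A_j^N)\uparrow\xi(A_j)$ almost surely by continuity from below and the identity in law passes to the limit with no control on $\xi$ of the remainder whatsoever ($\sigma$-finiteness of $\xi$ is then needed only to dispose of $\mu$-null sets). Your route costs more work but yields the quantitative byproduct $\nu_k\Rightarrow\delta_0$; the paper's monotonicity trick is the shorter path.

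For \eqref{exch-random-onetrans}$\Rightarrow$\eqref{exch-random-twosets} you correctly identify the construction but leave open precisely the point you call delicate, and it is worth recording how the paper closes it: you do not need measure \emph{isomorphisms} at all, only everywhere-defined measurable maps $h_j:A_j\to B_j$ with $\mu\circ h_j^{-1}=\mu(\,\cdot\,\cap B_j)$ (Lemma~\ref{propborelm}, which requires no invertibility). Since each $h_j$ is defined on all of $A_j$ and takes values in $B_j$, and the glued map sends the complement into the complement, the identity $f^{-1}(B_j)=A_j$ holds exactly by construction --- there is no null-set bookkeeping to do. The paper also sidesteps your infinite-measure complement $S\setminus\bigcup_j A_j$ by enclosing everything in a finite-measure envelope $E=A\cup B\cup F$ (with $F$ a buffer set guaranteeing the leftover pieces $A_0=E\setminus A$ and $B_0=E\setminus B$ have equal measure in $(0,\infty)$) and taking $f=\mathrm{id}$ on $E^c$. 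As written, your appeal to an isomorphism theorem ``mod null sets'' between the two infinite-measure complements would lose exactly the pointwise control you need, so this substitution is not optional.
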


\begin{definition}
\label{def-exch-rm}
Let $\xi$ be a random measure on a measure space $(S,\sS,\mu)$. If $\xi$ satisfies either of the three conditions \eqref{exch-random-twosets}, \eqref{exch-random-twofunctions}, \eqref{exch-random-twotrans} in Lemma~\ref{exch-random}, then it is called {\em $\mu$-exchangeable}.
\end{definition}

It is easily seen that the $\mu$-exchangeable random measures on~$\N$, where $\mu$ is the counting measure, are exactly the exchangeable sequences in~$\overline{\R}_+$\,.

For the proof of Lemma~\ref{exch-random} some technical prerequisites are required. We need the following version of the Cram\'{e}r-Wold device for $\overline{\R}_+^{\, n}$\,. To this end the definition of the Euclidean inner product $\left\langle \,\cdot\, , \,\cdot\, \right\rangle$ is extended to~$\overline{\R}_+^{\, n} \!\times \overline{\R}_+^{\, n}$ in the canonical way.

\begin{lemma}
\label{cramwoldinf}
Two random elements $\xi$ and $\eta$ in~$\overline{\R}_+^{\, n}$ are equal in distribution if and only if $\left\langle u, \xi \right\rangle \deq \left\langle u, \eta \right\rangle$ for all $u \in \R_+^n$\,.
\end{lemma}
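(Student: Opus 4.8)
The forward implication is immediate, since $\langle u, \cdot \rangle$ is a fixed measurable map and equality in distribution is preserved under it. For the converse the plan is to pass from equality in distribution of the one-dimensional projections to equality of multivariate Laplace transforms, and then to transport the problem to a compact space on which a law is determined by its moments. The first step is to note that $z \mapsto \RMe^{-z}$ is bounded and continuous on $\overline{\R}_+$ (with the convention $\RMe^{-\infty} = 0$). Hence, for each fixed $u \in \R_+^n$, the hypothesis $\langle u, \xi \rangle \deq \langle u, \eta \rangle$ yields
\[
\E\, \RMe^{-\langle u, \xi \rangle} = \E\, \RMe^{-\langle u, \eta \rangle} .
\]
In particular this identity holds for every $u = m \in \N_0^n$.

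Next I would compactify. The map $\phi(x) = \big( \RMe^{-x_1}, \ldots, \RMe^{-x_n} \big)$ is a homeomorphism, and in particular a Borel isomorphism, from $\overline{\R}_+^{\,n}$ onto the compact space $[0,1]^n$. Writing $\xi' = \phi(\xi)$ and $\eta' = \phi(\eta)$, one has for every $m \in \N_0^n$
\[
\E \prod_{j=1}^n (\xi'_j)^{m_j} = \E\, \RMe^{-\langle m, \xi \rangle} = \E\, \RMe^{-\langle m, \eta \rangle} = \E \prod_{j=1}^n (\eta'_j)^{m_j} ,
\]
so $\xi'$ and $\eta'$ share all mixed moments. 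Since $[0,1]^n$ is compact, the polynomials are dense in $C([0,1]^n)$ by the Stone--Weierstrass theorem, whence equal moments force $\E\, h(\xi') = \E\, h(\eta')$ for every $h \in C([0,1]^n)$, and therefore $\xi' \deq \eta'$. Applying the Borel isomorphism $\phi^{-1}$ returns $\xi \deq \eta$.

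The only genuine subtlety is the presence of the value $+\infty$, which is exactly why the classical Cram\'er--Wold device over~$\R^n$---via characteristic functions and all directions $u \in \R^n$---cannot be used verbatim. The positivity of $\xi$ and $\eta$ allows me to replace characteristic functions by Laplace transforms, so that restricting to $u \in \R_+^n$ (indeed to $u \in \N_0^n$) already captures the full law; meanwhile the homeomorphism $\phi$ simultaneously tames the point at infinity---mass of $\xi$ on $\{x_j = \infty\}$ is recorded as mass of $\xi'$ on $\{x'_j = 0\}$---and reduces the whole question to a moment problem on a compact set, where uniqueness is automatic.
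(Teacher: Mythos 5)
Your proof is correct, and in fact the paper states Lemma~\ref{cramwoldinf} without giving any proof, so there is no argument of the author's to compare against. Your route is the natural one: restricting to $u\in\N_0^n$ turns the hypothesis into equality of all mixed moments of $\phi(\xi)$ and $\phi(\eta)$ on the compact cube, where Stone--Weierstrass settles uniqueness, and the compactification $x\mapsto \RMe^{-x}$ cleanly absorbs the point at infinity (note that your factorization $\RMe^{-\langle m,\xi\rangle}=\prod_j(\xi'_j)^{m_j}$ relies on the conventions $0\cdot\infty=0$ and $0^0=1$, which are exactly the ``canonical'' extension of the inner product the paper intends). The only cosmetic alternative would be to quote uniqueness of multivariate Laplace transforms directly, but your moment-problem finish is self-contained and equally short.
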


\begin{corollary}
\label{ranmeas}
Let $\xi$ and $\eta$ be two random measures on a measurable space~$(S, \sS)$. Then $\xi \deq \eta$ under each of the following conditions:
\begin{benumber}
\item \label{ranmeas-dissets} $( \xi(A_1), \ldots, \xi(A_n) \big) \deq \big( \eta(A_1), \ldots, \eta(A_n) )$ for any disjoint $A_1\,, \ldots, A_n \in \sS$, $n \geq 1$.
\item \label{ranmeas-posfun} $\int f \, d\xi \, \deq \, \int f \, d\eta\,$ for each measurable function $f \geq 0$.
\end{benumber}
\end{corollary}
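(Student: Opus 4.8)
The plan is to reduce both cases to the single structural fact that the law of a random measure on $(S,\sS)$ is determined by its finite-dimensional distributions, i.e.\ by the joint laws of the vectors $(\xi(M_1), \ldots, \xi(M_n))$ for arbitrary (not necessarily disjoint) $M_1, \ldots, M_n \in \sS$ and all $n \geq 1$. Once this reduction is granted, it suffices to show, under either hypothesis, that
\[
\big( \xi(M_1), \ldots, \xi(M_n) \big) \deq \big( \eta(M_1), \ldots, \eta(M_n) \big)
\]
for every such choice of sets. I expect the reduction itself to be the one point that needs care: the $\sigma$-algebra on the space of measures on $(S,\sS)$ is generated by the evaluation maps $m \mapsto m(M)$, and the finite-dimensional cylinders form a $\pi$-system generating it, so two laws agreeing on all of them must coincide (a standard monotone-class argument, cf.\ \cite{kallenberg:fou}).

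For hypothesis~\eqref{ranmeas-posfun} the argument is immediate via Lemma~\ref{cramwoldinf}. I would fix $M_1, \ldots, M_n$ and $u \in \R_+^n$ and apply the hypothesis to the simple function $f = \sum_{j=1}^n u_j \one_{M_j} \geq 0$. Then $\int f \, d\xi = \langle u, (\xi(M_1), \ldots, \xi(M_n)) \rangle$ and likewise for $\eta$, so \eqref{ranmeas-posfun} gives $\langle u, (\xi(M_1), \ldots, \xi(M_n)) \rangle \deq \langle u, (\eta(M_1), \ldots, \eta(M_n)) \rangle$ for every $u \in \R_+^n$. The Cram\'er--Wold device then yields equality in distribution of the two vectors, as required.

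For hypothesis~\eqref{ranmeas-dissets} I would pass from arbitrary sets to disjoint ones by refining to atoms. Given $M_1, \ldots, M_n$, let $A_1, \ldots, A_m$ be the nonempty sets among the intersections $\bigcap_j M_j^{\eps_j}$, where $\eps_j \in \{0,1\}$, $M_j^1 = M_j$, and $M_j^0 = S \setminus M_j$. These are pairwise disjoint, and each $M_i$ is the union of the atoms it contains, so there is a fixed index set $J_i$ with $\xi(M_i) = \sum_{k \in J_i} \xi(A_k)$ by finite additivity of $\xi(\omega, \,\cdot\,)$. Hence there is a fixed measurable (indeed linear) map $T : \overline{\R}_+^m \to \overline{\R}_+^n$ with $(\xi(M_1), \ldots, \xi(M_n)) = T\big(\xi(A_1), \ldots, \xi(A_m)\big)$ and $(\eta(M_1), \ldots, \eta(M_n)) = T\big(\eta(A_1), \ldots, \eta(A_m)\big)$. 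Since \eqref{ranmeas-dissets} applied to the disjoint atoms gives $(\xi(A_1), \ldots, \xi(A_m)) \deq (\eta(A_1), \ldots, \eta(A_m))$, applying the fixed map $T$ preserves equality in distribution and the claim follows.

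The genuine obstacle is the reduction in the first paragraph; the two case arguments are then short. A secondary point to keep in mind throughout is that the measures may take the value $+\infty$, so all vectors live in $\overline{\R}_+^n$ and all sums are interpreted there. This causes no difficulty, since every quantity is nonnegative, so the sums defining $\xi(M_i)$ and the inner products involve no ambiguity, and the map $T$ remains well defined and measurable on $\overline{\R}_+^m$.
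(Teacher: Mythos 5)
Your proof is correct and follows the route the paper intends: the corollary is stated without proof immediately after Lemma~\ref{cramwoldinf}, and the implicit argument is exactly yours --- reduce to finite-dimensional distributions via the $\pi$-system of cylinder sets generated by the evaluation maps, handle~\eqref{ranmeas-posfun} by applying the hypothesis to nonnegative simple functions and invoking Lemma~\ref{cramwoldinf}, and handle~\eqref{ranmeas-dissets} by refining arbitrary sets to their disjoint Boolean atoms and using finite additivity. Your explicit attention to the monotone-class reduction and to values in $\overline{\R}_+$ supplies details the paper leaves unstated; nothing is missing.
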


The following result is required in Lemma~\ref{exch-random} for the implication from \eqref{exch-random-perm} to~\eqref{exch-random-twosets} and for the corresponding implication in Lemma~\ref{swap-random}.

\begin{lemma}
\label{setapprox}
Let $(S,\sS,\mu)$ be an atomless measure space, $J \geq 1$, $A_j \in \sS$ with $0 < \mu(A_j) < \infty$ for $1 \leq j \leq J$, and $c_k \downarrow 0$ as $k \to \infty$. Then there exist measurable sequences $(A_j^n)_{n \geq 1}$ with $A_j^n \uparrow A_j$ as $n \to \infty$ for $1 \leq j \leq J$, and a subsequence $(c_{k(n)})_{n \geq 1}$ such that
\[
0 < \mu \!\left(A_j \!\setminus\! A_j^n \right) \leq c_{k(n)}\,, \quad\quad \mu \!\left(A_j^n\right) = m_{jn} \, c_{k(n)}
\]
for $1 \leq j \leq J$, $n \geq 1$, and some integers $m_{jn} \geq 0$\,.
\end{lemma}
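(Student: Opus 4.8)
The plan is to reduce everything to the classical intermediate value (Darboux) property of atomless measures. Since each $A_j$ has finite measure, the restriction of $\mu$ to $A_j$ is a finite atomless measure, so by Sierpi\'nski's theorem every value in $[0,\mu(A_j)]$ is the measure of some measurable subset of $A_j$. I will use this in its enlargement form: for any measurable $C \subseteq A_j$ and any $t \in [0,\mu(A_j \setminus C)]$ there is a measurable $D$ with $C \subseteq D \subseteq A_j$ and $\mu(D) = \mu(C)+t$ (apply the theorem to $A_j \setminus C$ to find $E \subseteq A_j \setminus C$ with $\mu(E)=t$ and put $D = C \cup E$). This lets me not only realize a prescribed measure by a subset, but also enlarge an already chosen subset to any larger prescribed measure while staying inside $A_j$.

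Write $a_j = \mu(A_j) \in (0,\infty)$. For a scale $c>0$, the two requirements $\mu(A_j^n)=mc$ and $0 < a_j - mc \le c$ force the single integer $m = \lceil a_j/c \rceil - 1 \ge 0$, with error $a_j - mc \in (0,c]$. Set $f_j(c) := (\lceil a_j/c \rceil - 1)\,c$, so that $f_j(c) \in [a_j - c,\, a_j)$ and hence $f_j(c) \to a_j$ as $c \downarrow 0$. The one genuine obstacle is that $f_j(c)$ need not be monotone in $c$, so choosing subsets independently at each scale will not yield a nested family. I therefore choose the subsequence sparsely and inductively so that the target measures increase simultaneously for all $j$: having fixed $k(n-1)$, I use $a_j - f_j(c_{k(n-1)}) > 0$ and pick $k(n) > k(n-1)$ with $c_{k(n)} \le \min_{1 \le j \le J}\big(a_j - f_j(c_{k(n-1)})\big)$, which is possible since $c_k \downarrow 0$. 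Then $f_j(c_{k(n)}) \ge a_j - c_{k(n)} \ge f_j(c_{k(n-1)})$ for every $j$, so the targets are non-decreasing in $n$, while $c_{k(n)} \to 0$ forces $f_j(c_{k(n)}) \to a_j$.

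With this subsequence I build the sets by induction. Take $A_j^1 \subseteq A_j$ with $\mu(A_j^1) = f_j(c_{k(1)})$; given $A_j^n$ with $\mu(A_j^n) = f_j(c_{k(n)})$, the inequalities $f_j(c_{k(n)}) \le f_j(c_{k(n+1)}) \le a_j$ together with the enlargement form of the Darboux property yield $A_j^{n+1}$ with $A_j^n \subseteq A_j^{n+1} \subseteq A_j$ and $\mu(A_j^{n+1}) = f_j(c_{k(n+1)})$. By construction $\mu(A_j^n) = m_{jn}\, c_{k(n)}$ with $m_{jn} = \lceil a_j/c_{k(n)} \rceil - 1 \ge 0$, and $\mu(A_j \setminus A_j^n) = a_j - f_j(c_{k(n)}) \in (0,\,c_{k(n)}]$, exactly as claimed.

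It remains to upgrade the union to $A_j$ exactly. Setting $B_j = \bigcup_n A_j^n \subseteq A_j$, we get $\mu(B_j) = \lim_n \mu(A_j^n) = a_j$, so $N_j := A_j \setminus B_j$ is $\mu$-null. Replacing each $A_j^n$ by $A_j^n \cup N_j$ leaves every measure unchanged, keeps the family increasing, and gives $\bigcup_n (A_j^n \cup N_j) = A_j$, that is $A_j^n \uparrow A_j$; moreover $\mu\big(A_j \setminus (A_j^n \cup N_j)\big) = \mu\big((A_j \setminus A_j^n) \cap B_j\big) = \mu(A_j \setminus A_j^n)$ still lies in $(0,\,c_{k(n)}]$. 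Thus the main load-bearing step is the inductive, sufficiently sparse choice of the subsequence enforcing simultaneous monotonicity of the targets; the rest is a direct application of the intermediate value property and a harmless absorption of a null set.
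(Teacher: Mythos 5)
Your proof is correct and follows essentially the same route as the paper's: an inductive construction in which the next scale $c_{k(n)}$ is chosen no larger than the smallest current gap $\min_j \mu\!\left(A_j \!\setminus\! A_j^{n-1}\right)$, so that the target measures $m_{jn}\,c_{k(n)}$ are non-decreasing in $n$ and the sets can be enlarged step by step using the intermediate value property of atomless measures. You additionally make explicit two points the paper leaves implicit, namely the formula $m_{jn}=\lceil \mu(A_j)/c_{k(n)}\rceil-1$ and the absorption of the null set $A_j\setminus\bigcup_n A_j^n$ so that $A_j^n\uparrow A_j$ holds exactly rather than only up to a $\mu$-null set.
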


\begin{proof}
For $n = 1$ let $k_1 = 1$ and choose $A_j^1 \subset A_j$ and $m_{j1} \in \Z_+$ for $1 \leq j \leq J$ such that $\mu \!\left(A_j^1\right) = m_{j1} \, c_1$ and $0 < \mu \!\left(A_j \!\setminus\! A_j^1 \right) \leq c_1$\,. Note that we may have $m_{j1} = 0$. Now assume that, for some $N \geq 1$, we have already found $A_j^n$, $m_{jn}$\,, and $k_n$ for $1 \leq j \leq J$, $1 \leq n \leq N$. Set
\[
k_{N+1} = \min \big\{ k \geq 1 \, ; \, \mu \!\left(A_j \!\setminus\! A_j^N \right) > c_k \; (1 \leq j \leq J) \big\}
\]
and choose $A_j^{N + 1}$ and $m_{j (N+1)}$ such that
\[
A_j^N \subset A_j^{N+1}, \quad \mu \!\left(A_j^{N+1}\right) = m_{j (N+1)} \, c_{k(N+1)}\,, \quad 0 < \mu \!\left(A_j \!\setminus\! A_j^{N+1} \right) \leq c_{k(N+1)}\,. \qedhere
\]
\end{proof}

In order to show that \eqref{exch-random-onetrans} implies~\eqref{exch-random-twosets} in Lemma~\ref{exch-random} and the corresponding implication in Lemma~\ref{swap-random} we make use of the following result, which can be derived e.g.\ using Proposition~9.1.11 in~\cite{bogachev:mea} and Lemma~3.22 in~\cite{kallenberg:fou}.

\begin{lemma}
\label{propborelm}
Let $(C,\sC,\mu)$ and $(D,\sD,\nu)$ be measure spaces where $\mu$ is atomless, $D$ is Borel, and $\mu(C) = \nu(D) \in (0,\infty)$. Then there is a measurable function $h : C \to D$ such that $\mu \circ h^{-1} = \nu$.
\end{lemma}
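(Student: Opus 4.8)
The plan is to factor the desired map through the unit interval equipped with Lebesgue measure, delegating the atomless source $(C,\sC,\mu)$ and the Borel target $(D,\sD,\nu)$ to the two cited results respectively. Since $m := \mu(C) = \nu(D) \in (0,\infty)$, I would first pass to the normalized probability measures $\bar{\mu} = \mu/m$ and $\bar{\nu} = \nu/m$; note that $\bar{\mu}$ is again atomless, and that $\mu \circ h^{-1} = \nu$ holds for a map $h$ if and only if $\bar{\mu} \circ h^{-1} = \bar{\nu}$. Hence it suffices to realize $\bar{\nu}$ as the image measure of $\bar{\mu}$ under a measurable map.

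First I would exploit the atomlessness of $\bar{\mu}$. By Proposition~9.1.11 in~\cite{bogachev:mea}, an atomless probability measure on any measurable space can be pushed forward onto Lebesgue measure on $[0,1]$: there is a measurable map $g : C \to [0,1]$ with $\bar{\mu} \circ g^{-1} = \lambda$, where $\lambda$ denotes Lebesgue measure on $[0,1]$. The crucial point is that atomlessness alone suffices here, with no Borel-structure or countable-generation assumption on $(C,\sC)$ (the map is built by iterated equal-mass splitting, using that an atomless measure attains all intermediate values); this is precisely why this step is delegated to Bogachev's result rather than proved directly.

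Next I would use that $D$ is Borel. By Lemma~3.22 in~\cite{kallenberg:fou}, every probability measure on a Borel space is the image of Lebesgue measure under a measurable map from $[0,1]$; applied to $\bar{\nu}$ this produces a measurable $\phi : [0,1] \to D$ with $\lambda \circ \phi^{-1} = \bar{\nu}$. (Concretely this combines the Borel isomorphism of $D$ with a Borel subset of $[0,1]$ and an inverse-distribution-function transform, but I would invoke it as a black box.)

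Finally, set $h = \phi \circ g : C \to D$, which is measurable as a composition of measurable maps. Then
\[
\mu \circ h^{-1} = m \, (\bar{\mu} \circ g^{-1}) \circ \phi^{-1} = m \, (\lambda \circ \phi^{-1}) = m \, \bar{\nu} = \nu,
\]
as required. The only genuine content lies in the two cited existence results; the contribution of this lemma is the observation that they compose, with $([0,1],\lambda)$ serving as a universal intermediate object through which an atomless source can be routed onto an arbitrary Borel target of equal total mass. I expect the main obstacle — to the extent there is one — to be merely quoting the two cited results in the exact form used and tracking the normalization constant $m$ correctly, rather than any substantive difficulty.
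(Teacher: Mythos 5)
Your proof is correct and follows exactly the route the paper indicates: the paper gives no written proof but states that the lemma "can be derived e.g.\ using Proposition~9.1.11 in Bogachev and Lemma~3.22 in Kallenberg," which is precisely your factorization through $([0,1],\lambda)$ after normalizing by $m$. Nothing further is needed.
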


\begin{proof}[Proof of Lemma~\ref{exch-random}]
We first show that \eqref{exch-random-twosets} implies~\eqref{exch-random-twotrans}. Let $f$ and $g$ be as in~\eqref{exch-random-twotrans}. If $n \geq 1$ and $A_1\,, \ldots, A_n \in \sB(\R_+)$ are disjoint, then also $ f^{-1}(A_1), \ldots, f^{-1}(A_n)$ are disjoint, and $g^{-1}(A_1), \ldots, g^{-1}(A_n)$ are disjoint. Moreover $\mu (f^{-1} (A_j)) = \mu (g^{-1} (A_j))$ for $1 \leq j \leq n$. Hence by~\eqref{exch-random-twosets}
\[
\left( \xi (f^{-1} (A_1)), \ldots, \xi (f^{-1} (A_n)) \right) \,\deq\, \left( \xi (g^{-1} (A_1)), \ldots, \xi (g^{-1} (A_n)) \right) .
\]
By Corollary~\ref{ranmeas}~\eqref{ranmeas-dissets} the two random measures $\xi \circ f^{-1}$ and $\xi \circ g^{-1}$ on~$\R_+$ have the same distribution.

In order to show that \eqref{exch-random-twotrans} implies~\eqref{exch-random-twofunctions}, let $f$ and $g$ be as required and note that
\[
\int_S f \, d\xi = \int_{\R_+} \! x \; d \!\left( \xi \circ f^{-1}\right)\! (x)  \deq
\int_{\R_+} \! x \; d \!\left( \xi \circ g^{-1}\right)\! (x) = \int_S g \, d\xi \,.
\]

To see that \eqref{exch-random-twofunctions} implies~\eqref{exch-random-twosets}, let $n$, $(A_j)_{1 \leq j \leq n}$\,, and $(B_j)_{1 \leq j \leq n}$ be as in~\eqref{exch-random-twosets}, $u \in \R_+^n$\,, and define the functions \mbox{$f = \sum_{j = 1}^n u_j \one_{A(j)}$} and \mbox{$g = \sum_{j = 1}^n u_j \one_{B(j)}$}\,. We have, for $B \in \sB(\R_+)$,
\[
\mu \!\left\{f \in B \right\} = \sum_{j = 1}^n \mu(A_j) \one_B (u_j) = \sum_{j = 1}^n \mu(B_j) \one_B (u_j) = \mu \!\left\{g \in B \right\} .
\]
Hence by assumption
\[
\sum_{j = 1}^n u_j \, \xi(A_j) = \int f \, d\xi \,\deq\, \int g \, d\xi = \sum_{j = 1}^n u_j \, \xi(B_j) \,.
\]
Now \eqref{exch-random-twosets} follows by Lemma~\ref{cramwoldinf}.

The implications from \eqref{exch-random-twosets} to~\eqref{exch-random-onetrans} and from \eqref{exch-random-twosets} to \eqref{exch-random-perm} are rather straightforward to prove.

We next prove that \eqref{exch-random-twosets} follows from~\eqref{exch-random-perm} under the additional assumptions. Let $n$, $(A_j)_{1 \leq j \leq n}$\,, and $(B_j)_{1 \leq j \leq n}$ be as in~\eqref{exch-random-twosets}, and $(c_k)_{k \geq 1}$ as in~\eqref{exch-random-perm}. First assume that \mbox{$\mu (A_j) \in (0,\infty)$} for $1 \leq j \leq n$. We may additionally assume that \mbox{$A_i \cap B_j = \emptyset$} for $1 \leq i, j \leq n$. For otherwise we could choose a third family of measurable disjoint sets $(C_j)_{1 \leq j \leq n}$ such that $\mu (C_j) = \mu (A_j)$ for $1 \leq j \leq n$, and $C_j \cap A_i = \emptyset$ and $C_j \cap B_i = \emptyset$ for $1 \leq i, j \leq n$, and then conclude that
\[
\big(\xi (A_1), \ldots, \xi (A_n) \big) \,\deq\, \big(\xi (C_1), \ldots, \xi (C_n) \big) \,\deq\, \big(\xi (B_1), \ldots, \xi (B_n) \big) .
\]
By Lemma~\ref{setapprox} we may choose sequences of measurable sets $(A_j^N)_{N \geq 1}$ and $(B_j^N)_{N \geq 1}$ for $1 \leq j \leq n$, a subsequence $(c_{k(N)})_{N \geq 1}$ of $(c_k)$, and integers $m_{jN} \geq 0$ for $1 \leq j \leq n$, $N \geq 1$, such that $A_j^N \uparrow A_j$ and $B_j^N \uparrow B_j$ as $N \to \infty$ and
\begin{align*}
 \mu \!\left(A_j^N\right) & = m_{jN} \, c_{k(N)}\,,  & 0 & < \mu \!\left(A_j \!\setminus\! A_j^N \right) \leq c_{k(N)}\,,\\
\mu \!\left(B_j^N\right) & = m_{jN} \, c_{k(N)}\,, & 0 & < \mu \!\left(B_j \!\setminus\! B_j^N \right) \leq c_{k(N)} \,.
\end{align*}
For large~$N$ we have $m_{jN} \geq 1$ for all~$j$. For such $N$ we may partition the sets $A_j^N$ and $B_j^N$ each into $m_{jN}$ sets of $\mu$-measure $c_{k(N)}$\,, and therefore obtain by~\eqref{exch-random-perm}
\[
\left(\xi \!\left( A_1^N \right), \ldots, \xi \!\left( A_n^N \right) \right) \,\deq\, \left(\xi \!\left( B_1^N \right), \ldots, \xi \!\left( B_n^N \right) \right) .
\]
Since $\xi \!\left( A_j^N \right)\! \uparrow \xi \!\left( A_j \right)$ and $\xi \!\left( B_j^N \right)\! \uparrow \xi \!\left( B_j \right)$ almost surely as $N \to \infty$ for $1 \leq j \leq n$, we have
\[
\big(\xi (A_1), \ldots, \xi (A_n) \big) \,\deq\, \big(\xi (B_1), \ldots, \xi (B_n) \big) \,.
\]
So far we have assumed that $\mu (A_j) \in (0,\infty)$ for $1 \leq j \leq n$. We next relax this assumption and assume only that $\mu (A_j) \in \R_+$ for $1 \leq j \leq n$, i.e.\ some sets may have $\mu$-measure zero. It is sufficient to show that, for each $A \in \sS$, $\mu (A) = 0$  implies 
$\xi (A) = 0$ almost surely. Let $A \in \sS$ with $\mu (A) = 0$. By the $\sigma$-finiteness of~$\xi$ we may assume that $\xi (A) < \infty$ almost surely. If $A = \emptyset$, then clearly $\xi(A) = 0$. If $A \neq \emptyset$, choose measurable sequences $(A_k)_{k \geq 1}$ and $(B_k)_{k \geq 1}$ with $A_k \downarrow A$ and $B_k \downarrow \emptyset$ as $k \to \infty$, $\mu (A_k) = \mu (B_k) \in (0, \infty)$ for $k \geq 1$, and $\xi (A_1) < \infty$, $\xi (B_1) < \infty$ almost surely. The proof for positive $\mu$-measure implies that $\xi (A_k) \deq \xi (B_k)$ for $k \geq 1$. Since \mbox{$\xi (A_k) \downarrow \xi (A)$} and \mbox{$\xi (B_k) \downarrow 0$} almost surely as $k \to \infty$, we obtain $\xi (A) = 0$ almost surely. The general case $\mu (A_j) \in \overline{\R}_+$ for $1 \leq j \leq n$ is now easily obtained by approximation with sets of finite measure.

Finally we show the implication from \eqref{exch-random-onetrans} to~\eqref{exch-random-twosets} under the stated conditions. So let $n$, $(A_j)_{1 \leq j \leq n}$\,, and $(B_j)_{1 \leq j \leq n}$ be as in~\eqref{exch-random-twosets}. As for the preceding implication we first consider the case $\mu (A_j) \in (0,\infty)$ for $1 \leq j \leq n$. Define \mbox{$A = \bigcup_{j = 1}^n A_j$} and \mbox{$B = \bigcup_{j = 1}^n B_j$} and choose a set $F \in \sS$ disjoint from $A$ and $B$ with $\mu (F) \in (0,\infty)$. Set \mbox{$E = A \cup B \cup F$}, \mbox{$A_0 = E \!\setminus\! A$}, and \mbox{$B_0 = E \!\setminus\! B$}. It follows that $\mu (E) \in (0,\infty)$ and
\[
\mu (A_0) = \mu (E) - \mu (A) = \mu (E) - \mu(B) = \mu (B_0) < \infty \,.
\]
Moreover, since $F \subset A_0$\,, we have $\mu (A_0) = \mu (B_0) \in (0,\infty)$. Now note that, by Lemma~\ref{propborelm}, for any $C, D \in \sS$ with $\mu (C) = \mu (D) \in (0,\infty)$, there exists a measurable function \mbox{$h : C \to D$} such that $\mu (h^{-1} (M)) = \mu (M)$ for each $M \in \sS \cap D$. Applying this result to $A_j$ and $B_j$ for $j \in \left\{ 0, 1, \ldots, n \right\}$, we can choose respective functions $h_j : A_j \to B_j$\,. As $(A_j)_{0 \leq j \leq n}$ together with $E^c$ form a partition of~$S$, the following function is well-defined:
\[
f : S \to S, \quad\quad f = \left\{ \begin{array}{ll} h_j & \quad \mathrm{on} \; A_j \; (0 \leq j \leq n) \\[1em]
 \mathrm{id} & \quad \mathrm{on} \; E^c \end{array}
\right. \,.
\]
Since $(B_j)_{0 \leq j \leq n}$ together with $E^c$ also form a partition of~$S$, we have, for $M \in \sS$,
\[
f^{-1} (M) = \left( M \cap E^c \right) \,\cup\, \bigcup_{j = 0}^n h_j^{-1} \left( M \cap B_j \right) \,.
\] 
Hence $f$ is measurable. Moreover we compute $\mu (f^{-1} (M)) = \mu (M)$. By the assumption we therefore find that $\xi \circ f^{-1} \deq \xi$. Consequently
\[
\big( \xi (A_1), \ldots, \xi (A_n) \big) = \big( \xi (f^{-1} (B_1)), \ldots, \xi (f^{-1} (B_n)) \big) \deq \big( \xi (B_1), \ldots, \xi (B_n) \big) \,.
\]
So far we have assumed that $\mu (A_j) \in (0,\infty)$ for $1 \leq j \leq n$. The extension to general $A_j \in \sB(\overline{\R}_+)$ is now the same as in the proof of the implication from \eqref{exch-random-perm} to~\eqref{exch-random-twosets}.
\end{proof}

\subsection{Swap-invariance}
\label{subsec-swapmeas}

We now investigate random measures that have a weaker property than $\mu$-exchangeability, namely $\mu$-swap-invariance. As in the case of random sequences our definition is based on that of zonoid equivalence.

\begin{definition}
\label{def-zon-equ-rm}
Two random measures $\xi$ and $\eta$ on a measurable space $(S,\sS)$ are called {\em zonoid equivalent} if $\E \,\big| \sum_{j = 1}^n u_j \, \xi (A_j) \big| = \E \,\big| \sum_{j = 1}^n u_j \, \eta (A_j) \big|$ for all $n \geq 1$, $u \in \R^n$, and any disjoint $A_1\,, \ldots, A_n \in \sS$ with \mbox{$\E \, \xi (A_j) < \infty$} and \mbox{$\E \, \eta (A_j) < \infty$} for $1 \leq j \leq n$. 
\end{definition}

Note that the integrands in Definition~\ref{def-zon-equ-rm} are undefined for those points~$\omega \in \Omega$ where two terms in the sum are infinite with opposite signs. However this can happen only with probability zero due to the integrability assumptions. Clearly, for $n \geq 1$, two finite zonoid equivalent random measures on~$\left\{ 1, \ldots, n \right\}$ are zonoid equivalent random vectors in~$\R^n_+$\,. Zonoid equivalence of two random measures can be characterized as follows.

\begin{lemma}
\label{zon-equiv}
Two random measures $\xi$ and $\eta$ on a measurable space $(S,\sS)$ are zonoid equivalent if and only if $\E \left| \int f \, d\xi \right| = \E \left| \int f \, d\eta \right|$ for each measurable function $f : S \to \R$ with $\E \int \left| f \right| d\xi < \infty$ and $\E \int \left| f \right| d\eta < \infty$.
\end{lemma}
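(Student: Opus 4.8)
The plan is to prove the ``if'' direction by direct specialization to indicator combinations, and the ``only if'' direction by the standard two-stage approximation (simple functions first, then general integrands), where the one genuinely delicate point is passing to the limit inside the absolute value for signed $f$. For the ``if'' direction, suppose the stated equality $\E\,|\int f\,d\xi|=\E\,|\int f\,d\eta|$ holds for all admissible $f$. Given $n\geq1$, $u\in\R^n$, and disjoint $A_1,\ldots,A_n\in\sS$ with $\E\,\xi(A_j)<\infty$ and $\E\,\eta(A_j)<\infty$, set $f=\sum_{j=1}^n u_j\one_{A_j}$. Then $\E\int|f|\,d\xi=\sum_j|u_j|\,\E\,\xi(A_j)<\infty$, and likewise for $\eta$, so $f$ is admissible; since $\int f\,d\xi=\sum_j u_j\,\xi(A_j)$, the hypothesis delivers precisely the defining identity of zonoid equivalence.

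For the ``only if'' direction I would first handle real simple functions. Writing such an $f$ as $f=\sum_k v_k\one_{D_k}$ with $D_k=\{f=v_k\}$ ranging over its distinct nonzero values, the sets $D_k$ are pairwise disjoint. If $f$ is admissible, then $\E\int|f|\,d\xi=\sum_k|v_k|\,\E\,\xi(D_k)<\infty$ forces $\E\,\xi(D_k)<\infty$ for every $k$, and symmetrically $\E\,\eta(D_k)<\infty$; hence zonoid equivalence applies to the disjoint family $(D_k)$ with coefficients $(v_k)$ and yields $\E\,|\int f\,d\xi|=\E\,|\int f\,d\eta|$.

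It then remains to extend from simple to general admissible $f$. I would write $f=f^+-f^-$ and choose simple functions $f_m^\pm$ with $0\leq f_m^\pm\uparrow f^\pm$, putting $f_m=f_m^+-f_m^-$, so that each $f_m$ is simple, $|f_m|\leq|f|$, and $f_m\to f$ pointwise. Each $f_m$ is admissible since $\E\int|f_m|\,d\xi\leq\E\int|f|\,d\xi<\infty$ and likewise for $\eta$, so the simple-function case gives $\E\,|\int f_m\,d\xi|=\E\,|\int f_m\,d\eta|$ for every $m$. The hypothesis $\E\int|f|\,d\xi<\infty$ makes $\int|f|\,d\xi<\infty$ almost surely, so $\int f^\pm\,d\xi$ are finite a.s.; monotone convergence (applied to $\xi(\omega,\cdot)$ for fixed $\omega$) gives $\int f_m^\pm\,d\xi\to\int f^\pm\,d\xi$ and hence $\int f_m\,d\xi\to\int f\,d\xi$ almost surely, so $|\int f_m\,d\xi|\to|\int f\,d\xi|$ a.s. Since $|\int f_m\,d\xi|\leq\int|f|\,d\xi\in L^1(\P)$, dominated convergence yields $\E\,|\int f_m\,d\xi|\to\E\,|\int f\,d\xi|$, and symmetrically for $\eta$; letting $m\to\infty$ then closes the argument.

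The main obstacle I anticipate is exactly the signed case: because $|\int f\,d\xi|$ does not split additively into the contributions of $f^+$ and $f^-$, one cannot take limits in the positive and negative parts separately, which is why the naive approach fails. The resolving device is to secure almost-sure convergence of the \emph{whole} integral $\int f_m\,d\xi\to\int f\,d\xi$, resting on the $\P$-a.s.\ finiteness of $\int|f|\,d\xi$ supplied by the integrability hypothesis, and then to dominate $|\int f_m\,d\xi|$ by the single fixed function $\int|f|\,d\xi\in L^1(\P)$ so that dominated convergence transfers the equality $\E\,|\int f_m\,d\xi|=\E\,|\int f_m\,d\eta|$ to the limit. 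Verifying the finite-intensity conditions $\E\,\xi(D_k)<\infty$ and $\E\,\eta(D_k)<\infty$ needed to invoke zonoid equivalence at each approximation step is the only other bookkeeping, and it follows uniformly from $|f_m|\leq|f|$.
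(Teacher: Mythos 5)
Your proof is correct and follows essentially the same route as the paper's: reduce to simple functions with disjoint level sets (where admissibility forces the finite-intensity conditions needed to invoke the definition of zonoid equivalence), then pass to a general admissible $f$ via pointwise approximation with $|f_m|\leq |f|$ and dominated convergence under $\E$, using $\int |f|\,d\xi$ as the integrable dominating variable. The only differences are cosmetic: you spell out the easy ``if'' direction, which the paper leaves implicit, and you obtain the $\omega$-wise convergence $\int f_m\,d\xi\to\int f\,d\xi$ via monotone convergence on $f_m^{\pm}$ rather than a single dominated-convergence step.
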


\begin{proof}
Assume that $\xi$ and $\eta$ are zonoid equivalent. We first prove the implication for simple functions \mbox{$f = \sum_{j = 1}^n u_j \one_{A(j)}$} where $A_j \in \sS$ and $u_j \in \R$ for $1 \leq j \leq n$ such that $\E \int \left| f \right| d\xi < \infty$ and $\E \int \left| f \right| d\eta < \infty$. We may assume that the sets $(A_j)_{1 \leq j \leq n}$ are disjoint and $u_j \neq 0$ for $1 \leq j \leq n$. It follows that \mbox{$\E \, \xi (A_j) < \infty$} and \mbox{$\E \, \eta (A_j) < \infty$}, and therefore
\[
\E \left| \int f \, d\xi \right| = \E \, \bigg| \sum_{j = 1}^n u_j \, \xi (A_j) \bigg| = \E \, \bigg| \sum_{j = 1}^n u_j \, \eta (A_j) \bigg| = \E \left| \int f \, d\eta \right| \,.
\]
Now let $f : S \to \R$ be an arbitrary measurable function that satisfies the integrability conditions. Choose a sequence of simple functions~$f_n$ such that $f_n \to f$ and $|f_n| \uparrow |f|$ pointwise as $n \to \infty$. By dominated convergence we obtain \mbox{$\E \left| \int f_n \, d\xi \right| \to \E \left| \int f \, d\xi \right|$} as \mbox{$n \to \infty$}, and similarly for $\eta$.
\end{proof}

Before defining swap-invariance in Definition~\ref{def-si-rm} below, we formulate five statements about first absolute moments of random measures, that mirror their counterparts in the exchangeable case in Lemma~\ref{exch-random}. The first three statements are equivalent and serve as definition.

\begin{lemma}
\label{swap-random}
Let $\xi$ be a random measure on a measure space $(S,\sS,\mu)$. Consider the following statements:
\begin{benumber}
\item \label{swap-random-twosets} $\E \, \big| \sum_{j = 1}^n u_j \, \xi (A_j) \big| = \E \, \big| \sum_{j = 1}^n u_j \, \xi (B_j) \big|$ for all $n \geq 1$, $u \in \R^n$, disjoint $A_1\,, \ldots, A_n \in \sS$, and disjoint $B_1\,, \ldots, B_n \in \sS$ such that $\mu (A_j) = \mu (B_j)$, \mbox{$\E \, \xi (A_j) < \infty$}, and \mbox{$\E \, \xi (B_j) < \infty$} for $1 \leq j \leq n$.
\item \label{swap-random-twofunctions} $\E \left| \int f \, d\xi \right| = \E \left| \int g \, d\xi \right|$ for each two measurable functions $f, g : S \to \R$ such that $\mu \circ f^{-1} = \mu \circ g^{-1}$, $\E \int \left| f \right| d\xi < \infty$, and  $\E \int \left| g \right| d\xi < \infty$.
\item \label{swap-random-twotrans} The random measures $\xi \circ f^{-1}$ and $\xi \circ g^{-1}$ on~$\R$ are zonoid equivalent for each two measurable functions $f, g : S \to \R$ such that $\mu \circ f^{-1} = \mu \circ g^{-1}$, $\E \int \left| f \right| d\xi < \infty$, and  $\E \int \left| g \right| d\xi < \infty$.
\item \label{swap-random-onetrans} The random measures $\xi \circ f^{-1}$ and $\xi$ are zonoid equivalent for each measurable function $f : S \to S$ with $\mu \circ f^{-1} = \mu$.
\item \label{swap-random-perm} There exists a sequence $c_k \downarrow 0$ such that for each $k \geq 1$, $n \geq 1$, any disjoint $A_1\,, \ldots, A_n \in \sS$ with $\mu (A_j) = c_k$ and $\E \, \xi (A_j) < \infty$ for $1 \leq j \leq n$, and each $\pi \in \perm(n)$, we have $\E \, \big| \sum_{j = 1}^n u_j \, \xi \!\left(A_j\right) \!\big| = \E \, \big| \sum_{j = 1}^n u_j \, \xi \!\left(A_{\pi(j)}\right) \!\big|$ for all $u \in \R^n$.
\end{benumber}
Statements \eqref{swap-random-twosets}, \eqref{swap-random-twofunctions}, and \eqref{swap-random-twotrans} are equivalent, and they imply \eqref{swap-random-onetrans} and~\eqref{swap-random-perm}. If $\mu$ is atomless and $\sigma$-finite, the intensity measure $\E \, \xi$ is $\sigma$-finite, and $\mu (S) = \infty$, then \eqref{swap-random-perm} implies \eqref{swap-random-twosets}. If, in addition, $(S,\sS)$ is Borel, then also \eqref{swap-random-onetrans} implies \eqref{swap-random-twosets}.
\end{lemma}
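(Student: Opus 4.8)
The plan is to mirror the proof of Lemma~\ref{exch-random} line for line, systematically replacing equality in distribution by zonoid equivalence and the appeal to Corollary~\ref{ranmeas} / the Cram\'er--Wold device by Lemma~\ref{zon-equiv}, while carrying the integrability bounds $\E\,\xi(A_j)<\infty$ through every estimate. For the cycle of equivalences, to pass from \eqref{swap-random-twosets} to \eqref{swap-random-twotrans} I take $f,g$ with $\mu\circ f^{-1}=\mu\circ g^{-1}$ and, for disjoint Borel $A_j\subset\R$ with $\E(\xi\circ f^{-1})(A_j)$ and $\E(\xi\circ g^{-1})(A_j)$ finite, note that the preimages $f^{-1}(A_j)$ and $g^{-1}(A_j)$ are disjoint, of equal $\mu$-measure, and of finite intensity, so \eqref{swap-random-twosets} yields exactly the defining equality of first absolute moments for $\xi\circ f^{-1}$ and $\xi\circ g^{-1}$. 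The implication to \eqref{swap-random-twofunctions} follows by applying Lemma~\ref{zon-equiv} to these two image measures with the test function $x\mapsto x$, using $\E\!\int|x|\,d(\xi\circ f^{-1})=\E\!\int|f|\,d\xi<\infty$. For \eqref{swap-random-twofunctions} implies \eqref{swap-random-twosets} I insert $f=\sum_j u_j\one_{A(j)}$ and $g=\sum_j u_j\one_{B(j)}$; then $\mu\circ f^{-1}=\mu\circ g^{-1}$ is checked exactly as in Lemma~\ref{exch-random}, and the integrability $\E\!\int|f|\,d\xi=\sum_j|u_j|\,\E\,\xi(A_j)<\infty$ is automatic from the hypotheses of \eqref{swap-random-twosets}.

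The implications to \eqref{swap-random-onetrans} and \eqref{swap-random-perm} are immediate: \eqref{swap-random-perm} is the special case $B_j=A_{\pi(j)}$, and for \eqref{swap-random-onetrans} one applies \eqref{swap-random-twosets} with $B_j=f^{-1}(A_j)$. The substantive direction is \eqref{swap-random-perm} implies \eqref{swap-random-twosets} under the extra hypotheses. I first treat $\mu(A_j)\in(0,\infty)$ and reduce, as in Lemma~\ref{exch-random}, to the case $A_i\cap B_j=\emptyset$ by inserting a third disjoint family $C_j$ with $\mu(C_j)=\mu(A_j)$; here I additionally require $\E\,\xi(C_j)<\infty$, which is arranged using $\sigma$-finiteness of $\E\,\xi$ together with $\mu(S)=\infty$ and atomlessness (carve the $C_j$ from finitely many sets of finite intensity, of which there is ample room). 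Then Lemma~\ref{setapprox} gives $A_j^N\uparrow A_j$ and $B_j^N\uparrow B_j$ of measure $m_{jN}\,c_{k(N)}$. I partition all the $A_j^N$ and $B_j^N$ into the resulting $2\sum_j m_{jN}$ disjoint cells of measure $c_{k(N)}$, all of finite intensity and mutually disjoint by the reduction, and apply \eqref{swap-random-perm} to this single combined family with weight $u_j$ on the cells of $A_j^N$, weight $0$ on the cells of $B_j^N$, and a permutation matching each cell of $A_j^N$ to a cell of $B_j^N$. This gives $\E\,|\sum_j u_j\,\xi(A_j^N)|=\E\,|\sum_j u_j\,\xi(B_j^N)|$, and letting $N\to\infty$ with the domination $|\sum_j u_j\,\xi(A_j^N)|\le\sum_j|u_j|\,\xi(A_j)\in L^1$ yields the claim by dominated convergence. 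The passage to $\mu$-measure-zero sets reduces to showing $\mu(A)=0\Rightarrow\xi(A)=0$ a.s.; approximating $A$ from above by $A_k$ paired with $B_k\downarrow\emptyset$ of equal positive measure and finite intensity (again invoking $\sigma$-finiteness of $\E\,\xi$), one gets $\E\,\xi(A_k)=\E\,\xi(B_k)\to0$, hence $\E\,\xi(A)=0$. The general case $\mu(A_j)\in\overline{\R}_+$ then follows by approximation precisely as in Lemma~\ref{exch-random}.

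For \eqref{swap-random-onetrans} implies \eqref{swap-random-twosets} in the Borel setting I reuse the measure-preserving map constructed at the end of the proof of Lemma~\ref{exch-random}. Writing $A=\bigcup_j A_j$, $B=\bigcup_j B_j$, choosing $F$ disjoint of finite positive measure, and setting $E=A\cup B\cup F$, $A_0=E\setminus A$, $B_0=E\setminus B$, Lemma~\ref{propborelm} (applicable since every measurable subspace of a Borel space is Borel) furnishes measure-preserving $h_j:A_j\to B_j$ for $0\le j\le n$, and I put $f=h_j$ on $A_j$ and $f=\mathrm{id}$ on $E^c$. As before $f$ is measurable with $\mu\circ f^{-1}=\mu$, and one checks $f^{-1}(B_j)=A_j$, so that $(\xi\circ f^{-1})(B_j)=\xi(A_j)$. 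Since $\E(\xi\circ f^{-1})(B_j)=\E\,\xi(A_j)<\infty$, zonoid equivalence of $\xi\circ f^{-1}$ and $\xi$ from \eqref{swap-random-onetrans} applies and gives $\E\,|\sum_j u_j\,\xi(A_j)|=\E\,|\sum_j u_j\,(\xi\circ f^{-1})(B_j)|=\E\,|\sum_j u_j\,\xi(B_j)|$.

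I expect the main obstacle to be bookkeeping the integrability rather than any new geometric idea. Because zonoid equivalence only constrains \emph{integrable} linear combinations, every auxiliary set introduced in the argument must be selected with finite intensity $\E\,\xi(\cdot)<\infty$; this is exactly why the hypothesis here is $\sigma$-finiteness of the intensity measure $\E\,\xi$ rather than of $\xi$ itself (the assumption used in Lemma~\ref{exch-random}). The delicate point is to confirm that one can always extract sets that are simultaneously disjoint from a prescribed finite collection, of prescribed finite $\mu$-measure, and of finite intensity, both for the family $C_j$ in the disjointness reduction and for the pairs $A_k,B_k$ in the measure-zero step. Once this is established, every remaining step transfers from Lemma~\ref{exch-random} with $\E\,|\,\cdot\,|$ in place of equality in distribution.
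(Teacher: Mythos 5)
Your proposal is correct and follows essentially the same route as the paper, which only writes out the implication from \eqref{swap-random-twotrans} to \eqref{swap-random-twofunctions} (via Lemma~\ref{zon-equiv} applied to the identity on $\R$, exactly as you do) and declares the remaining implications ``similar to Lemma~\ref{exch-random}''. Your fleshed-out version of that transfer is sound, and the points you flag as delicate --- replacing Cram\'er--Wold by direct moment identities with dominated convergence, and using $\sigma$-finiteness of $\E\,\xi$ to select auxiliary sets of finite intensity --- are precisely the adjustments the paper's shorthand presupposes.
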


\begin{proof}
We only show that \eqref{swap-random-twotrans} implies~\eqref{swap-random-twofunctions}, the proofs of the other implications being similar to Lemma~\ref{exch-random}. Assume that $f$ and $g$ satisfy the conditions of~\eqref{swap-random-twofunctions}. Then $\xi \circ f^{-1}$ and $\xi \circ g^{-1}$ are zonoid equivalent by~\eqref{swap-random-twotrans}. Moreover
\[
\E \int_\R \left\vert x \right\vert d \!\left(\xi \circ f^{-1}\right) \!(x) = \E \int_S \left\vert f \right\vert d\xi < \infty,
\]
and similarly for~$g$. Hence we may apply Lemma~\ref{zon-equiv} to the identity map on~$\R$ and obtain
\[
\E \left\vert \int_\R \! x \; d \!\left(\xi \circ f^{-1}\right) \!(x) \right\vert = \E \left\vert \int_\R \! x \; d \!\left(\xi \circ g^{-1}\right) \!(x) \right\vert,
\]
and therefore $\E \left| \int f \, d\xi \right| = \E \left| \int g \, d\xi \right|$.
\end{proof}

\begin{definition}
\label{def-si-rm}
Let $\xi$ be a random measure on a measure space $(S,\sS,\mu)$. If $\xi$ satisfies either of the properties \eqref{swap-random-twosets}, \eqref{swap-random-twofunctions}, \eqref{swap-random-twotrans} in Lemma~\ref{swap-random}, then $\xi$ is called {\em $\mu$-swap-invariant}.
\end{definition}

Clearly, every $\mu$-exchangeable random measure is $\mu$-swap-invariant. Moreover the $\sigma$-finite $\mu$-swap-invariant random measures on~$\N$, where $\mu$ is the counting measure, are exactly the non-negative swap-invariant sequences. Note that if $\xi$ is a $\mu$-swap-invariant random measure on~$S$, then, for each $A \in \sS$ with $\E \, \xi (A) < \infty$, $\mu (A) = 0$ implies $\xi (A) = 0$~almost surely. Note also that statement~\eqref{swap-random-twotrans} in Lemma~\ref{swap-random} is equivalent to the stronger statement where the restriction to functions satisfying $\E \int \left| f \right| d\xi < \infty$ and  $\E \int \left| g \right| d\xi < \infty$ is dropped, as can be seen from the proof. The same is not true for statement~\eqref{swap-random-twofunctions}.

\subsection{Swap-invariance vs.\ exchangeability}
\label{subsec-connections}

The close connection between the concepts of swap-invariance and exchangeability can already be seen from the similarity of the characterizations \eqref{swap-random-twosets} to \eqref{swap-random-perm} in Lemmas~\ref{exch-random} and~\ref{swap-random}. Two more results are now established. Proposition~\ref{rm-swap-from-ex} provides a construction of $\mu$-swap-invariant random measures from a given $\mu$-swap-invariant random measure. In particular, one can use a $\mu$-exchangeable random measure to construct a $\mu$-swap-invariant random measure that is not $\mu$-exchangeable. Afterwards we show in Theorem~\ref{swapmeas} how a swap-invariant random measure on a space of finite measure can be expressed through an exchangeable random measure. A similar representation of swap-invariant random measures on a space of infinite measure can only be proven in Section~\ref{sec-erg} because it is based on the ergodic theorem and on the representation result for sequences derived in Section~\ref{sec-sisequ}.

\begin{proposition}
\label{rm-swap-from-ex}
Let $(S,\sS,\mu)$ be a measure space, and $\eta$ a random measure on~$S$ that is $\mu$-swap-invariant under a probability measure~$\Q$. Further let $X$ be a random variable with $X > 0$ \, $\Q$-almost surely and $\E_\Q \!\left[ X^{-1} \right] < \infty$, and $\P$ another probability measure defined by
\[
\frac{\mrmdd\P}{\mrmdd\Q} = \frac{1}{X \, \E_\Q \!\left[ X^{-1} \right]} \,.
\]
Then the random measure $\xi$ defined by $\xi = X \eta$ is $\mu$-swap-invariant under~$\P$.
\end{proposition}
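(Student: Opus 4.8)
The plan is to verify the defining property \eqref{swap-random-twosets} of Lemma~\ref{swap-random} for $\xi = X\eta$ under the new measure $\P$. So I would fix $n \geq 1$, $u \in \R^n$, and disjoint sets $A_1, \ldots, A_n$ and $B_1, \ldots, B_n$ in $\sS$ with $\mu(A_j) = \mu(B_j)$, $\E_{\P}\, \xi(A_j) < \infty$, and $\E_{\P}\, \xi(B_j) < \infty$ for $1 \leq j \leq n$, and aim to show $\E_{\P}\,\big| \sum_j u_j \xi(A_j)\big| = \E_{\P}\,\big| \sum_j u_j \xi(B_j)\big|$. The central observation is that $\xi(A_j) = X\,\eta(A_j)$ pointwise, so that $\sum_j u_j \xi(A_j) = X \sum_j u_j \eta(A_j)$, and the factor $X$ (which is positive $\Q$-a.s.) cancels exactly against the factor $X^{-1}$ in the density $\mrmdd\P/\mrmdd\Q$.

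Carrying out the change of measure, the computation I expect is
\begin{align*}
\E_{\P}\,\bigg| \sum_{j=1}^n u_j\, \xi(A_j) \bigg|
&= \E_{\Q}\!\left[ \frac{1}{X\, \E_{\Q}[X^{-1}]} \cdot X \,\bigg| \sum_{j=1}^n u_j\, \eta(A_j) \bigg| \right] \\
&= \frac{1}{\E_{\Q}[X^{-1}]}\; \E_{\Q}\,\bigg| \sum_{j=1}^n u_j\, \eta(A_j) \bigg| ,
\end{align*}
and the identical identity with $B_j$ in place of $A_j$. Thus the whole problem reduces, up to the common positive constant $(\E_{\Q}[X^{-1}])^{-1}$, to the corresponding equality of first absolute moments for $\eta$ under $\Q$.

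Before invoking swap-invariance of $\eta$ I would check that the integrability hypotheses transfer correctly, since \eqref{swap-random-twosets} for $\eta$ under $\Q$ requires $\E_{\Q}\,\eta(A_j) < \infty$ and $\E_{\Q}\,\eta(B_j) < \infty$. The same change of measure gives $\E_{\P}\,\xi(A_j) = (\E_{\Q}[X^{-1}])^{-1}\, \E_{\Q}\,\eta(A_j)$, so the assumption $\E_{\P}\,\xi(A_j) < \infty$ is in fact \emph{equivalent} to $\E_{\Q}\,\eta(A_j) < \infty$, and likewise for the $B_j$. Hence $(A_j)$ and $(B_j)$ satisfy exactly the conditions demanded in \eqref{swap-random-twosets} for $\eta$ under $\Q$; combined with $\mu(A_j) = \mu(B_j)$ and the $\mu$-swap-invariance of $\eta$ under $\Q$, this yields $\E_{\Q}\,\big| \sum_j u_j \eta(A_j)\big| = \E_{\Q}\,\big| \sum_j u_j \eta(B_j)\big|$. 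Substituting back into the two displayed identities gives the desired equality for $\xi$ under $\P$.

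I do not expect a serious obstacle here: the argument is essentially a single exact cancellation, and the only points requiring care are bookkeeping ones. I would note at the outset that $\xi = X\eta$ is genuinely a random measure (for each $\omega$ it is the nonnegative scalar $X(\omega)$ times a measure, and $\xi(\cdot, M) = X\,\eta(\cdot, M)$ is measurable), and that all integrands are well-defined off a null set: the finiteness of $\E_{\P}\,\xi(A_j)$ forces $\xi(A_j) < \infty$ $\P$-a.s., so $\sum_j u_j \xi(A_j)$ is an honest real number almost surely, in accordance with the remark following Definition~\ref{def-zon-equ-rm}. The structural reason the statement holds is simply that zonoid equivalence, being defined through first absolute moments, is preserved under multiplication by a positive random factor precisely when that factor is divided out again by the accompanying change of probability measure.
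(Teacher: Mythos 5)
Your proof is correct and is exactly the argument the paper has in mind: the paper omits the proof of Proposition~\ref{rm-swap-from-ex} (as it does for the sequence analogue, Proposition~\ref{swap-from-ex}, which it calls a direct consequence of the definition), and the intended argument is precisely your change-of-measure cancellation of $X$ against $X^{-1}$ in the density, verified against condition~(i) of Lemma~\ref{swap-random}. Your observation that $\E_{\P}\,\xi(A_j)<\infty$ is equivalent to $\E_{\Q}\,\eta(A_j)<\infty$, so the integrability hypotheses transfer exactly, is the one bookkeeping point worth spelling out, and you handle it correctly.
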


\begin{example}
\label{ex-swap-rm}
Let $(S,\sS)$ be a measurable space, $\mu$ an atomless $\sigma$-finite measure on~$S$, and $\eta$ a Poisson process on~$S$ with intensity measure~$\mu$ under a probability measure~$\Q$. It follows by Lemma~\ref{exch-random}~\eqref{exch-random-twosets} that $\eta$ is $\mu$-exchangeable under~$\Q$. Assuming $\mu(S) \geq 2$ we may choose two disjoint measurable sets $K$ and $L$ with \mbox{$\mu (K) = \mu (L) = 1$}. Define the random variable $X = 1 + \one \!\left\{ \eta (K) > 0 \right\}$ and a new probability measure $\P$ by
\[
\frac{\mrmdd\P}{\mrmdd\Q} = \frac{1}{c \, X}, \quad\quad c = \E_\Q \!\left[ X^{-1} \right] = \frac{1}{2} \left( 1 + e^{-1} \right) .
\]
By Proposition~\ref{rm-swap-from-ex} the point process $\xi = X \eta$ is $\mu$-swap-invariant under~$\P$. We calculate
\[
\P \big( \xi (L) = 0 \big) = \frac{1}{e} \neq \frac{2}{1 + e} = \P \big( \xi (K) = 0 \big) \,.
\]
This shows that $\xi$ is not $\mu$-exchangeable under~$\P$.
\end{example}

For the next theorem the following lemma is needed, which is a consequence of~\protect{\cite[Theorem 2]{mss:14}} and~\protect{\cite[Lemma 1.35]{kallenberg:fou}}.

\begin{lemma}
\label{lemma-norm-ze}
Let $\left\| \,\cdot\, \right\|$ be a norm on~$\R^d$ where $d \geq 1$, and $\xi_1\,$, $\xi_2$ two random vectors in $\R^d$ that are integrable under a probability measure~$\P$, and either both symmetric or both supported by~$\R^d_+$\,. For $i \in \left\{ 1, 2 \right\}$ assume $\E_\P \!\left\| \xi_i \right\| > 0$ and define a probability measure~$\Q_i$ by
\[
\frac{\mrmdd\Q_i}{\mrmdd\P} = \frac{\left\| \xi_i \right\|}{ \E_\P \!\left\| \xi_i \right\|}\,.
\]
If $\xi_1$ and $\xi_2$ are zonoid equivalent under~$\P$, then $\E_\P \!\left\| \xi_1 \right\|  = \E_\P \!\left\| \xi_2 \right\|$ and $\Q_1 \!\left(\xi_1 / \!\left\| \xi_1 \right\| \in A \right) = \Q_2 \!\left( \xi_2 / \!\left\| \xi_2 \right\| \in A \right)$ for all $A \in \sB(\R^d)$.
\end{lemma}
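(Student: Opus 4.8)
The plan is to recast the hypothesis and both conclusions as statements about a single finite measure on the unit sphere, so that the lemma reduces to the injectivity of the cosine transform, which is the substance of \cite[Theorem 2]{mss:14}. For $i \in \{1,2\}$ I would write $\SS = \{ x \in \R^d : \|x\| = 1 \}$ and introduce the finite measure $\nu_i$ on $\SS$ by $\nu_i(A) = \E_\P \big[ \|\xi_i\| \, \one\{\xi_i / \|\xi_i\| \in A\} \big]$ for Borel $A \subseteq \SS$, where $\xi_i/\|\xi_i\|$ is assigned an arbitrary fixed value on the zero-weighted event $\{\xi_i = 0\}$. Then $\nu_i$ is the image of $\|\cdot\| \, \mathrm{d}(\P \circ \xi_i^{-1})$ under the radial projection $x \mapsto x/\|x\|$, and the image-measure formula (\cite[Lemma 1.35]{kallenberg:fou}) yields three identities at once: the total mass is $\nu_i(\SS) = \E_\P \|\xi_i\|$; the normalization satisfies $\nu_i(A)/\nu_i(\SS) = \Q_i(\xi_i/\|\xi_i\| \in A)$, because the tilt $\mathrm{d}\Q_i/\mathrm{d}\P = \|\xi_i\|/\E_\P\|\xi_i\|$ is exactly the weight defining $\nu_i$ (and forces $\xi_i \neq 0$ $\Q_i$-almost surely); and, for each $u \in \R^d$, $\E_\P |\langle u, \xi_i \rangle| = \int_\SS |\langle u, s\rangle| \, \nu_i(\mathrm{d}s)$. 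Thus the hypothesis that $\xi_1$ and $\xi_2$ are zonoid equivalent under $\P$ is precisely the equality of cosine transforms $\int_\SS |\langle u, s\rangle| \, \nu_1(\mathrm{d}s) = \int_\SS |\langle u, s\rangle| \, \nu_2(\mathrm{d}s)$ for all $u \in \R^d$.

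It then suffices to prove $\nu_1 = \nu_2$, since equality of total masses gives $\E_\P\|\xi_1\| = \E_\P\|\xi_2\|$, and equality of the normalized measures gives $\Q_1(\xi_1/\|\xi_1\| \in A) = \Q_2(\xi_2/\|\xi_2\| \in A)$ for all $A \in \sB(\R^d)$ (replacing $A$ by $A \cap \SS$, as the directions lie on $\SS$). The equality $\nu_1 = \nu_2$ is exactly the injectivity of the cosine transform on the relevant class of measures, and this is where I would invoke \cite[Theorem 2]{mss:14}. In the symmetric case $\xi_i \deq -\xi_i$ forces each $\nu_i$ to be invariant under the antipodal map $s \mapsto -s$, and the cosine transform is injective on such even measures. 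In the non-negative case each $\nu_i$ is concentrated on $\SS \cap \R^d_+$, and I would pass to the symmetrizations $\bar\nu_i = \tfrac12(\nu_i + \nu_i \circ (-\mathrm{id}))$: these are even, share the cosine transforms of the $\nu_i$ since $|\langle u, -s\rangle| = |\langle u, s\rangle|$, and hence coincide by the even case; as $\SS \cap \R^d_+$ and its antipode $\SS \cap \R^d_-$ are disjoint, restricting $\bar\nu_1 = \bar\nu_2$ back to $\SS \cap \R^d_+$ recovers $\tfrac12\nu_1 = \tfrac12\nu_2$, whence $\nu_1 = \nu_2$.

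The only substantive step is the cosine-transform injectivity on even measures underlying \cite[Theorem 2]{mss:14}; everything else is image-measure bookkeeping. The points needing care are: verifying the integrability and positivity hypotheses so that each $\nu_i$ is a genuine nonzero finite measure (secured by $\E_\P\|\xi_i\| \in (0,\infty)$); confirming that $\{\xi_i = 0\}$ is harmless because it carries zero weight under both $\nu_i$ and $\Q_i$; and, in the non-negative case, observing that the positive and negative orthants meet only at the origin, so that the symmetrization can be inverted unambiguously on the sphere.
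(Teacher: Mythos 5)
Your proof is correct and is essentially the paper's own argument: the paper proves this lemma only by citing \cite[Theorem 2]{mss:14} together with \cite[Lemma 1.35]{kallenberg:fou}, and your reformulation via the weighted directional measures $\nu_i$ on the unit sphere, the equality of their cosine transforms, and the symmetrization trick for the $\R^d_+$-supported case is exactly the content behind that citation. The only step you gloss over is that cosine-transform injectivity is classically stated for even measures on the Euclidean sphere, so for the sphere of an arbitrary norm one should either push $\nu_i$ forward radially with the weight $|s|_2$ or, equivalently, apply \cite[Theorem 2]{mss:14} directly to the even positively homogeneous test functions $x \mapsto \left\| x \right\| \big( \one\{x/\!\left\| x \right\| \in A\} + \one\{-x/\!\left\| x \right\| \in A\} \big)$; this is routine and not a gap.
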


\begin{theorem}
\label{swapmeas}
Let $(S,\sS,\mu)$ be a space of finite measure and $\xi$ a random measure on~$S$ that is $\mu$-swap-invariant under a probability measure~$\P$ with $\E_\P \xi (S) \in (0,\infty)$. Then there exists a random measure~$\eta$ that is $\mu$-exchangeable under the probability measure $\Q$ defined by
\[
\frac{\mrmdd\Q}{\mrmdd\P} = \frac{\xi(S)}{\E_\P \xi (S)}
\]
such that $\xi = \xi(S) \, \eta$.
\end{theorem}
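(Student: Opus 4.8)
The plan is to take $\eta = \xi / \xi(S)$ on the event $\{\xi(S) > 0\}$ and to set $\eta$ equal to some fixed probability measure on the complementary event. Since
\[
\Q\big(\xi(S) = 0\big) = \frac{\E_\P\big[\one\{\xi(S)=0\}\,\xi(S)\big]}{\E_\P \xi(S)} = 0,
\]
this defines $\eta$ as $\xi/\xi(S)$ up to a $\Q$-null set, and the identity $\xi = \xi(S)\,\eta$ holds everywhere: on $\{\xi(S)=0\}$ both sides are the zero measure, irrespective of the choice of $\eta$ there. One checks that $\eta$ is a bona fide random measure. It then remains to show that $\eta$ is $\mu$-exchangeable under $\Q$, which I would do by verifying condition~\eqref{exch-random-twosets} of Lemma~\ref{exch-random}.

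So I would fix $n \geq 1$, disjoint $A_1, \ldots, A_n \in \sS$ and disjoint $B_1, \ldots, B_n \in \sS$ with $\mu(A_j) = \mu(B_j)$, and adjoin the remainders $R = S \setminus \bigcup_{j} A_j$ and $R' = S \setminus \bigcup_{j} B_j$. Then $A_1, \ldots, A_n, R$ and $B_1, \ldots, B_n, R'$ are two partitions of $S$, and
\[
\mu(R) = \mu(S) - \sum_{j} \mu(A_j) = \mu(S) - \sum_{j} \mu(B_j) = \mu(R').
\]
Every set involved is contained in $S$, so all relevant first moments are dominated by $\E_\P \xi(S) < \infty$. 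Hence the $\mu$-swap-invariance of $\xi$ under $\P$ (Lemma~\ref{swap-random}~\eqref{swap-random-twosets}, applied to these two $(n+1)$-families) gives that the vectors $\zeta_1 = (\xi(A_1), \ldots, \xi(A_n), \xi(R))$ and $\zeta_2 = (\xi(B_1), \ldots, \xi(B_n), \xi(R'))$ in $\R^{n+1}_+$ are zonoid equivalent under $\P$.

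The key observation is that, for the $\ell^1$-norm, one has $\|\zeta_1\|_1 = \xi(S) = \|\zeta_2\|_1$, because each family partitions $S$. Consequently the tilted measures that Lemma~\ref{lemma-norm-ze} forms from $\zeta_1$ and from $\zeta_2$ (with the $\ell^1$-norm) both coincide with $\Q$, since $\frac{\mrmdd\Q_i}{\mrmdd\P} = \|\zeta_i\|_1 / \E_\P\|\zeta_i\|_1 = \xi(S)/\E_\P\xi(S)$. As $\zeta_1, \zeta_2$ are $\P$-integrable, supported by $\R^{n+1}_+$, and satisfy $\E_\P\|\zeta_i\|_1 = \E_\P\xi(S) > 0$, Lemma~\ref{lemma-norm-ze} applies and yields $\Q(\zeta_1/\|\zeta_1\|_1 \in \cdot\,) = \Q(\zeta_2/\|\zeta_2\|_1 \in \cdot\,)$, that is
\[
\big(\eta(A_1), \ldots, \eta(A_n), \eta(R)\big) \deq \big(\eta(B_1), \ldots, \eta(B_n), \eta(R')\big) \quad \text{under } \Q.
\]
Projecting onto the first $n$ coordinates gives exactly condition~\eqref{exch-random-twosets}, so $\eta$ is $\mu$-exchangeable under $\Q$.

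The step genuinely doing the work — and the one I would be most careful about — is the introduction of the remainder sets $R, R'$: augmenting each family by its remainder forces both $\ell^1$-norms to equal the total mass $\xi(S)$, which is precisely the Radon--Nikodym weight defining $\Q$. This is what makes both tilted measures in Lemma~\ref{lemma-norm-ze} collapse to the single measure $\Q$, thereby converting the zonoid equivalence supplied by swap-invariance directly into equality in distribution under $\Q$, i.e.\ exchangeability. The remaining points (finiteness of the first moments, $\Q$-negligibility of $\{\xi(S)=0\}$, and measurability of $\eta$) are routine.
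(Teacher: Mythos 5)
Your proposal is correct and follows essentially the same route as the paper: augment both families by their remainder sets so that the resulting vectors in $\R^{n+1}_+$ have common $\ell^1$-norm $\xi(S)$, observe that the tilted measures of Lemma~\ref{lemma-norm-ze} then both coincide with~$\Q$, and read off condition~\eqref{exch-random-twosets} of Lemma~\ref{exch-random} by projection. The only cosmetic differences are that the paper sets $\eta = 0$ on $\{\xi(S)=0\}$ and inserts indicators $\one\{\xi(A_j)<\infty\}$ to make the vectors literally $\R^{n+1}_+$-valued, which your integrability remark handles equivalently.
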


\begin{proof}
Let $n \geq 1$ and $A_j\,, B_j \in \sS$ with $\mu (A_j) = \mu (B_j)$ for $1 \leq j \leq n$ such that $(A_j)_{1 \leq j \leq n}$ are disjoint and $(B_j)_{1 \leq j \leq n}$ are disjoint. Define $A_0 = S \setminus  \bigcup_{j = 1}^n A_j$ and $B_0 = S \setminus \bigcup_{j = 1}^n B_j$\,. Clearly, $\mu (A_0) = \mu (B_0)$. Now we define random vectors $\xi_A$ and $\xi_B$ in $\R_+^{n + 1}$ by
\[
\xi_{Aj} = \xi (A_j) \; \one \!\left\{ \xi (A_j) < \infty \right\}, \quad\quad \xi_{Bj} = \xi (B_j) \; \one \!\left\{ \xi (B_j) < \infty \right\}
\] 
for $0 \leq j \leq n$. Here the indicator functions are $\P$-almost surely equal to~$1$. Note that $\xi_A$ and $\xi_B$ are zonoid equivalent under~$\P$. Moreover $\left\| \xi_A \right\|_1 = \xi(S) =  \left\| \xi_B \right\|_1$ $\P$-almost surely where $\left\| x \right\|_1 = \sum_{j = 0}^n |x_j|$ for $x \in \R^{n + 1}$. Define
\[
\eta = \left\{ \begin{array}{ll} \displaystyle \frac{\xi}{\xi (S)} & \quad \mathrm{on} \;\left\{ \xi(S) > 0 \right\}\\[1em]
 0 & \quad \mathrm{on} \;\left\{ \xi(S) = 0 \right\} \end{array}
\right. .
\]
Applying Lemma~\ref{lemma-norm-ze},  it follows that, under~$\Q$,
\[
\big( \eta (A_1), \ldots, \eta (A_n) \big) \,\deq\, \big( \eta (B_1), \ldots, \eta (B_n) \big)
\]
Hence $\eta$ is $\mu$-exchangeable under~$\Q$.
\end{proof}

Note that in the special case if $\xi$ is a random probability measure, i.e.\ \mbox{$\xi (S) = 1$}, which is $\mu$-swap-invariant under~$\P$, Theorem~\ref{swapmeas} says that $\xi$ is even $\mu$-exchangeable under~$\P$. Theorem~\ref{swapmeas} is also needed in the proof of Theorem~\ref{theo-diffuse-rm}, that characterizes diffuse swap-invariant random measures.

\subsection{Intensity measure}
\label{subsec-int}

If $\mu$ is atomless, a simple consequence of $\mu$-swap-invariance (and therefore also of $\mu$-exchangeability) is that the intensity measure is proportional to~$\mu$.

\begin{theorem}
\label{siintmeas}
Let $(S,\sS, \mu)$ be a measure space where $\mu$ is atomless and $\sigma$-finite, and $\xi$ a $\mu$-swap-invariant random measure on~$S$ such that $\E \, \xi$ is $\sigma$-finite. Then $\E \, \xi = c \mu$ for some $c \in \R_+$\,.
\end{theorem}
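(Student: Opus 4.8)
The plan is to work with the intensity measure $\nu := \E\,\xi$ and to show first that $\nu$ depends on a set only through its $\mu$-measure, and then that this dependence is linear. I would begin by extracting from $\mu$-swap-invariance the single-set consequence: applying Lemma~\ref{swap-random}\eqref{swap-random-twosets} with $n = 1$ and $u_1 = 1$ to any two sets $A, B$ with $\mu(A) = \mu(B)$ and $\E\,\xi(A), \E\,\xi(B) < \infty$ gives $\E\,\xi(A) = \E\,\xi(B)$, that is $\nu(A) = \nu(B)$. Together with the already-noted fact that $\mu(A) = 0$ forces $\xi(A) = 0$ almost surely (hence $\nu(A) = 0$) whenever $\E\,\xi(A) < \infty$, this says that $\nu(A)$ is a well-defined function $\phi(\mu(A))$ of the $\mu$-measure alone, on the class of sets of finite intensity.

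Next I would realize additivity of $\phi$. Because $\mu$ is atomless, any set $D$ of finite $\mu$-measure splits into disjoint pieces of prescribed measures summing to $\mu(D)$; choosing $D$ with $\nu(D) < \infty$ and writing $D = A \sqcup B$ with $\mu(A) = s$ and $\mu(B) = t$ (so $\nu(A), \nu(B) \le \nu(D) < \infty$) yields $\phi(s + t) = \nu(D) = \nu(A) + \nu(B) = \phi(s) + \phi(t)$. The domain of $\phi$ is an interval of the form $[0,\Lambda)$ or $[0,\Lambda]$, since a subset of measure $t' < t$ inherits finite intensity. Thus $\phi$ is an additive, non-decreasing function on an interval. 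Iterating additivity gives $\phi(q r) = q\,\phi(r)$ for every positive rational $q$ (whenever both arguments lie in the domain), and I would fix the proportionality constant as $c = \phi(a)/a$ for a reference value $a$ with $0 < a < \Lambda$; such a value exists unless $\mu \equiv 0$, in which case $\nu \equiv 0$ and any $c$ works.

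The step needing the most care is passing from rational scaling to arbitrary measures and then to sets of infinite measure. Given $B$ with $\mu(B) = t < \infty$ and $\nu(B) < \infty$, I would build an increasing sequence $C_k \subset B$ with $\mu(C_k) = q_k a \uparrow t$ along rationals $q_k$ (possible by atomlessness, adding a small piece of $B \setminus C_k$ at each stage); then $\nu(C_k) = c\,q_k a \uparrow c\,t$ by continuity from below, while $C := \bigcup_k C_k$ satisfies $\mu(B \setminus C) = 0$ and hence $\nu(B \setminus C) = 0$, giving $\nu(B) = \nu(C) = c\,t = c\,\mu(B)$. Finally I would remove the finiteness restrictions using the $\sigma$-finiteness of both $\mu$ and $\nu$: intersecting a $\mu$-finite cover with a $\nu$-finite cover and disjointifying produces a partition $(S_i)$ on which both measures are finite, and for arbitrary $B \in \sS$ the identity $\nu(B) = \sum_i \nu(B \cap S_i) = c\sum_i \mu(B \cap S_i) = c\,\mu(B)$ follows by countable additivity. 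The main obstacle is precisely this monotone-squeeze argument: ensuring the approximating sets can be taken increasing with exactly rational multiples of $a$ as measures, so that continuity of $\nu$ from below upgrades the rational-scaling identity to equality at the irrational target while the $\mu$-null remainder contributes nothing.
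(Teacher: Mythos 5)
Your proof is correct and follows essentially the same route as the paper: both reduce the claim to a Cauchy functional equation for the intensity viewed as a function of the $\mu$-measure alone (using the $n=1$ case of swap-invariance), establish additivity and monotonicity, obtain rational homogeneity, upgrade to full linearity by a continuity argument, and then patch together the general case via $\sigma$-finiteness. The only organizational difference is that you define this function globally on all sets of finite intensity, so the proportionality constant is automatically uniform and your monotone squeeze via continuity of $\E\,\xi$ from below does the rest, whereas the paper works on each piece of a partition separately and then needs an extra pairing step with unions $S_i \cup S_j$ to match the constants across pieces; both versions are sound.
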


\begin{proof}
First note that $\mu(S) = 0$ implies $\E \, \xi (S) = 0$ by the $\sigma$-finiteness of~$\E \, \xi$ and~$\mu$-swap-invariance.

Now assume that $\mu (S) \in (0,\infty)$ and $\E \, \xi (S) < \infty$. There is a function \mbox{$h : [0, \mu(S)] \to \R_+$} such that $\E \, \xi (A) = h (\mu(A))$ for each $A \in \sS$. Since $\mu$ is atomless, $h$ is uniquely defined on~$[0, \mu(S)]$. We next show that $h$ is additive. Let $x, y \in \R_+$ such that $x + y \in [0, \mu(S)]$. Choose sets $A, B \in \sS$ with $\mu (A) = x$, $\mu (B) = y$, and $A \cap B = \emptyset$. Then
\begin{eqnarray*}
\label{silin}
h(x + y) & \!\!=\!\! & h\big((\mu(A) + \mu(B)\big) \,=\, h \big(\mu(A \cup B) \big) \,=\, \E \, \xi (A \cup B)\\
 & \!\!=\!\! & \E \, \xi(A) + \E \, \xi(B) \,=\, h (\mu(A)) +  h (\mu(B)) \,=\, h(x) + h(y) \,.
\end{eqnarray*}
Consequently for $x, y \in \R_+$ with $0 \leq x < y \leq \mu(S)$, we have $h(x) \leq h(y)$. Moreover $h(0) = h \!\left(\mu(\emptyset)\right) = \E \, \xi(\emptyset) = 0$. Now we obtain for $p, q \in \N$ with $1 \leq p \leq q$:
\[
h \!\left(\frac{p}{q} \, \mu(S) \right) = p \, h \!\left(\frac{1}{q} \, \mu(S) \right) = \frac{p}{q} \, q \, h \!\left(\frac{1}{q} \, \mu(S) \right) = \frac{p}{q} \, h \!\left(\mu(S) \right),
\]
that is, $h(z \mu(S)) = z \, h(\mu(S))$ for $z \in \nQ \cap [0,1]$. Since $h$ is non-decreasing, this implies that $h$ is continuous at~$0$. As $h$ is additive, it follows that $h$ is continuous on~$[0, \mu(S)]$, and therefore $h(z \mu(S)) = z \, h(\mu(S))$ for $z \in [0,1]$. We obtain, for each $A \in \sS$,
\[
\E \, \xi(A) = \frac{\E \, \xi(S)}{\mu(S)} \, \mu(A) \,.
\]

Now consider the case that $\mu (S)$ or $\E \,\xi (S)$ or both quantities are infinite. Choose a measurable partition $( S_j)_{j \geq 1}$ of~$S$ such that $\mu (S_j) < \infty$ and $\E \,\xi (S_j) < \infty$ for all $j \geq 1$. Define \mbox{$J = \left\{ j \in \N \, ; \, \mu(S_j) > 0 \right\}$}. Clearly $\xi (S_j) = 0$ almost surely for $j \in \N \setminus J$. For each $j \in J$ define
\[
\mu_j ( \, \cdot \,) = \mu (S_j \cap \, \cdot \, ), \quad \quad \xi_j (\, \cdot \,)  = \xi (S_j \cap \, \cdot \, ),
\]
so that $\mu_j$ is a finite measure on~$S$ and $\xi_j$ is a random measure on~$S$ with finite intensity measure. Now fix $j \in J$. Note that $\xi_j$ is $\mu_j$-swap-invariant and $\mu_j$ is atomless. Hence it follows by the first part of the proof for finite measure space that
\[
\E \, \xi_j = \frac{\E \, \xi_j(S)}{\mu_j(S)} \, \mu_j \,.
\]
If $|J| \geq 2$, let $i, j \in J$ with $i \neq j$ and consider the partition
\[
\sM = \big\{ S_k \, ; \, k \geq 1,\, k \neq i,\, k \neq j \big\} \cup \left\{ S_i \cup S_j \right\}
\]
of~$S$. We define
\[
\mu_{ij} ( \, \cdot \,) = \mu \big( (S_i \cup S_j) \cap \, \cdot \,\big) , \quad \quad \xi_{ij} ( \, \cdot \,) = \xi \big( (S_i \cup S_j) \cap \, \cdot \,\big) \,.
\]
Applying the previous result to the new partition~$\sM$ we obtain, in particular,
\[
\E \, \xi (S_i) = \E \, \xi_{ij} (S_i) = \frac{\E \, \xi_{ij} (S)}{\mu_{ij} (S)} \, \mu_{ij} (S_i) = \frac{\E \, \xi_{ij} (S)}{\mu_{ij} (S)} \, \mu (S_i), \quad\quad \E \, \xi (S_j) = \frac{\E \, \xi_{ij} (S)}{\mu_{ij} (S)} \, \mu (S_j) \,.
\]
Thus $c = \E \, \xi (S_k) / \mu(S_k)$ for some $c \in \R_+$ and all $k \in J$, and therefore
\[
\E \, \xi = \sum_{j \in J} \E \, \xi_j = c \sum_{j \in J} \mu_j = c \mu \,. \qedhere
\]
\end{proof}

In the $\mu$-exchangeable case the assumptions in Theorem~\ref{siintmeas} can be slightly weakened as follows.

\begin{corollary}
\label{cor-int-exch}
Let $(S,\sS, \mu)$ be a measure space where $\mu$ is atomless and $\sigma$-finite, and $\xi$ a $\mu$-exchangeable random measure on~$S$ such that there exists $A \in \sS$ with $\mu (A) \in (0,\infty)$ and $\E \, \xi (A) < \infty$. Then $\E \, \xi = c \mu$ for some $c \in \R_+$\,.
\end{corollary}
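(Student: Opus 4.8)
The plan is to bootstrap the single finiteness hypothesis into full $\sigma$-finiteness of the intensity measure $\E\,\xi$ and then simply invoke Theorem~\ref{siintmeas}, since every $\mu$-exchangeable random measure is $\mu$-swap-invariant. The whole point is that exchangeability is strong enough to propagate the finiteness of $\E\,\xi$ from the single set $A$ to a covering family, which is precisely the extra hypothesis that Theorem~\ref{siintmeas} had to assume for the merely swap-invariant case.

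First I would exploit exchangeability in its simplest form. Applying statement~\eqref{exch-random-twosets} of Lemma~\ref{exch-random} with $n = 1$ gives $\xi(B) \deq \xi(A)$ for every $B \in \sS$ with $\mu(B) = \mu(A)$, and hence $\E\,\xi(B) = \E\,\xi(A) < \infty$ for all such $B$. Next I would upgrade this to all sets of measure at most $\mu(A)$: given $C \in \sS$ with $\mu(C) \leq \mu(A)$, atomlessness of $\mu$ lets me enlarge $C$ to some $C' \supseteq C$ with $\mu(C') = \mu(A)$, by adjoining a subset of $S \setminus C$ of measure $\mu(A) - \mu(C)$ (this subset exists because $\mu(S \setminus C) \geq \mu(A) - \mu(C)$ and an atomless $\sigma$-finite measure realizes every value in $[0, \mu(S\setminus C)]$ on measurable subsets). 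Since $\xi(C) \leq \xi(C')$ pointwise, we get $\E\,\xi(C) \leq \E\,\xi(C') = \E\,\xi(A) < \infty$.

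The final step is to cover $S$ by such sets. By $\sigma$-finiteness of $\mu$ there is a measurable partition $(T_j)$ of $S$ with $\mu(T_j) < \infty$, and using atomlessness I subdivide each $T_j$ into finitely many pieces of $\mu$-measure at most $\mu(A)$, which is possible precisely because $\mu(A) > 0$. This produces a measurable partition $(S_k)$ of $S$ with $\mu(S_k) \leq \mu(A)$, whence $\E\,\xi(S_k) < \infty$ for every $k$ by the previous step, so that $\E\,\xi$ is $\sigma$-finite. All hypotheses of Theorem~\ref{siintmeas} are now in force for the $\mu$-swap-invariant random measure $\xi$, and that theorem delivers $\E\,\xi = c\mu$ for some $c \in \R_+$.

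The only mildly delicate points are the two appeals to atomlessness — enlarging $C$ and subdividing the $T_j$ to attain prescribed intermediate measures — but these rest on the standard intermediate-value property of atomless $\sigma$-finite measures, so I expect no genuine obstacle there. The essential content of the argument is conceptual rather than technical: exchangeability with a single finite value $\E\,\xi(A) < \infty$ forces $\E\,\xi$ to be finite on every set of measure $\leq \mu(A)$, hence $\sigma$-finite, which is exactly the gap one must otherwise assume away in the swap-invariant setting of Theorem~\ref{siintmeas}.
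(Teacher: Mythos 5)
Your proof is correct and follows essentially the same route as the paper: use exchangeability to transfer the finiteness of $\E\,\xi(A)$ to a countable partition of $S$, conclude that $\E\,\xi$ is $\sigma$-finite, and invoke Theorem~\ref{siintmeas}. The only difference is cosmetic --- the paper covers $S$ by sets of $\mu$-measure exactly $\mu(A)$ (allowing a finite cover when $\mu(S)$ is not a multiple of $\mu(A)$), whereas you use pieces of measure at most $\mu(A)$ together with a monotonicity step, which handles the remainder piece equally well.
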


\begin{proof}
We may choose a measurable finite cover or countable partition $(S_j)_{j \geq 1}$ of~$S$ such that $\mu (S_j) = \mu (A)$ for all $j \geq 1$. Then $\E \, \xi (S_j) = \E \, \xi (A)$ for $j \geq 1$, which shows the $\sigma$-finiteness of~$\E \, \xi$. Hence the conditions of Theorem~\ref{siintmeas} are satisfied.
\end{proof}

\begin{example}
\label{eq-swap-rm-int}
In Example~\ref{ex-swap-rm} we have $\E_\Q \eta = \mu$, which implies $\E_\P \xi = c^{-1} \mu$.
\end{example}

\subsection{Borel space}
\label{subs-Borel}

If $S$ is Borel and $\mu$ is atomless, Theorem~\ref{theo-diffuse-rm} below characterizes diffuse $\mu$-swap-invariant random measures. It is based on the following proposition which is a direct consequence of Proposition~1.22 in~\cite{kallenberg:sym}. It is formulated here for a general Borel space.

\begin{proposition}
\label{propdiff}
Let $(S,\sS)$ be a Borel space, $\hat{\mu}$ an atomless probability measure on~$S$, and $\eta$ a random measure on~$S$ that is $\sigma$-finite, almost surely diffuse, and $\hat{\mu}$-exchangeable. Then $\eta = a \hat{\mu}$ almost surely for some random variable $a \geq 0$.
\end{proposition}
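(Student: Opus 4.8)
The plan is to transport the problem to the standard setting of Lebesgue measure on $[0,1]$, where Proposition~1.22 in~\cite{kallenberg:sym} applies, and then to carry the conclusion back. Since $(S,\sS)$ is Borel and $\hat\mu$ is an atomless probability measure, $(S,\sS,\hat\mu)$ is a standard (Lebesgue--Rokhlin) probability space, so by the isomorphism theorem for atomless standard probability spaces (see e.g.~\cite{bogachev:mea}) there is a set $S_0 \in \sS$ with $\hat\mu(S \setminus S_0) = 0$ and a bimeasurable bijection $\phi : S_0 \to T_0$ onto a Borel set $T_0 \subseteq [0,1]$ of full Lebesgue measure such that $\hat\mu \circ \phi^{-1} = \lambda$, where $\lambda$ is Lebesgue measure on $[0,1]$. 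I would push $\eta$ forward to the random measure $\hat\eta$ on $[0,1]$ defined by $\hat\eta(B) = \eta\big(\phi^{-1}(B \cap T_0)\big)$ for $B \in \sB[0,1]$, which is well defined because $\phi$ is bimeasurable.

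The main work is to verify that $\hat\eta$ inherits the three hypotheses. Here $\sigma$-finiteness transfers at once, since a partition of $S$ into sets of finite $\eta$-measure is carried by $\phi$ to such a partition of $T_0$, while $[0,1] \setminus T_0$ is $\hat\eta$-null by construction. For $\lambda$-exchangeability I would check characterization~\eqref{exch-random-twosets} of Lemma~\ref{exch-random}: given disjoint $B_1, \ldots, B_n$ and disjoint $B_1', \ldots, B_n'$ in $\sB[0,1]$ with $\lambda(B_j) = \lambda(B_j')$, the preimages $\phi^{-1}(B_j \cap T_0)$ and $\phi^{-1}(B_j' \cap T_0)$ are disjoint in $S$ with equal $\hat\mu$-measures, so the corresponding vectors of $\eta$-values agree in distribution, which is exactly the required identity for $\hat\eta$. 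The step that genuinely uses the structure of $\phi$ is the transfer of diffuseness: for almost every $\omega$ the measure $\eta(\omega,\,\cdot\,)$ is diffuse, and since $\phi$ is a bijection, $\phi^{-1}(\{t\})$ is a single point of $S_0$ for each $t \in T_0$, whence $\hat\eta(\omega,\{t\}) = \eta\big(\omega,\phi^{-1}(\{t\})\big) = 0$; thus $\hat\eta$ is almost surely diffuse. This is precisely the point that would fail for a merely measure-preserving map such as the one furnished by Lemma~\ref{propborelm}, and it is why a bijective isomorphism is needed.

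A necessary preliminary is that the null sets introduced by the ``mod~$0$'' isomorphism do not contribute: for any $A \in \sS$ with $\hat\mu(A) = 0$, taking $n = 1$ in characterization~\eqref{exch-random-twosets} with the pair $(A,\emptyset)$ gives $\eta(A) \deq \eta(\emptyset) = 0$, so $\eta(A) = 0$ almost surely; in particular $\eta(S \setminus S_0) = 0$ almost surely, so $\hat\eta$ captures all of $\eta$. With these verifications in hand, Proposition~1.22 in~\cite{kallenberg:sym} yields a random variable $b \geq 0$ with $\hat\eta = b\lambda$ almost surely, and pushing this identity back through the fixed isomorphism $\phi$ gives, for every $A \in \sS$,
\[
\eta(A) = \eta(A \cap S_0) = \hat\eta\big(\phi(A \cap S_0)\big) = b\,\lambda\big(\phi(A \cap S_0)\big) = b\,\hat\mu(A \cap S_0) = b\,\hat\mu(A),
\]
that is $\eta = b\hat\mu$ almost surely with $a = b$. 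I expect the diffuseness transfer together with the correct bookkeeping of the exceptional null sets to be the only genuine obstacle; the remaining verifications are a routine change of variables.
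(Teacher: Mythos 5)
Your proposal is correct and follows essentially the same route as the paper: transport $\eta$ to $[0,1]$ via a measure-preserving bimeasurable bijection defined mod~$0$ (the paper builds it in two steps, a Borel isomorphism $h:S\to[0,1]$ followed by the mod-$0$ isomorphism of $([0,1],\nu)$ with $([0,1],\lambda)$, whereas you invoke the isomorphism theorem on $(S,\sS,\hat\mu)$ directly), verify $\sigma$-finiteness, $\lambda$-exchangeability via characterization~(i), and diffuseness via injectivity, apply Proposition~1.22 of~\cite{kallenberg:sym}, and pull back. Your explicit remark that $\hat\mu$-null sets are a.s.\ $\eta$-null is a detail the paper leaves implicit, and it is handled correctly.
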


\begin{proof}
Since $S$ is Borel, there is a Borel isomorphism $h : S \to [0,1]$. Let $\nu = \hat{\mu} \circ h^{-1}$. Clearly $\nu$ is an atomless probability measure on~$[0,1]$. It is known (see e.g.\ Theorem~2.1 in~\cite{walters:erg}) that the probability spaces $\left([0,1], \sB[0,1], \nu \right)$ and $\left([0,1], \sB[0,1], \lambda \right)$ where $\lambda$ denotes the Lebesgue measure are isomorphic in the sense that there are $L, M \in \sB[0,1]$ with $\nu(L) = 1$ and $\lambda(M) = 1$ and a Borel isomorphism $f: L \to M$ such that $\nu \circ f^{-1} = \lambda$. Define $K = h^{-1}(L)$ and the map $g: K \to M$, $g = f \circ (h |K)$. Then $g$ is a Borel isomorphism and
\[
\left( \hat{\mu} \circ g^{-1} \right) (M \cap \,\cdot\,) = \lambda ( \,\cdot\, ),\quad\quad
\left( \lambda \circ g \right) (K \cap \,\cdot\, ) = \hat{\mu}  ( \,\cdot\, ) \,.
\]

Now define the random measure $\zeta ( \,\cdot\, ) = \left( \eta \circ g^{-1} \right) (M \cap \,\cdot\, )$ on~$[0,1]$. We show that $\zeta$ is $\lambda$-exchangeable. Let $n \geq 1$ and $A_j\,, B_j \in \sB[0,1]$ with $\lambda (A_j) = \lambda (B_j)$ for $1 \leq j \leq n$ such that $(A_j)_{1 \leq j \leq n}$ are disjoint and $(B_j)_{1 \leq j \leq n}$ are disjoint. Then, for $1 \leq j \leq n$, 
\[
\hat{\mu} \!\left( g^{-1}(A_j \cap M) \right) = \lambda (A_j) = \lambda (B_j) = \hat{\mu} \!\left( g^{-1}(B_j \cap M) \right).
\]
The sets $(g^{-1}(A_j \cap M))_{1 \leq j \leq n}$ are disjoint, and the sets $(g^{-1}(B_j \cap M))_{1 \leq j \leq n}$ are disjoint as well. By the $\hat{\mu}$-exchangeability of~$\eta$,
\[
\left( \eta \!\left( g^{-1} (A_1 \cap M) \right), \ldots, \eta \!\left( g^{-1} (A_n \cap M) \right) \right) \,\deq\, \left( \eta \!\left( g^{-1} (B_1 \cap M) \right), \ldots, \eta \!\left( g^{-1} (B_n \cap M) \right) \right),
\]
and therefore
\[
\big( \zeta (A_1), \ldots, \zeta (A_n) \big) \,\deq\, \big( \zeta (B_1), \ldots, \zeta (B_n) \big) \,.
\]
This shows that $\zeta$ is $\lambda$-exchangeable. Clearly $\zeta$ is $\sigma$-finite. Moreover $\eta ( \,\cdot\, ) = \left( \zeta \circ g \right) (K \cap \, \cdot\, )$ almost surely. This implies, in particular, that $\zeta$ is almost surely diffuse. By Proposition~1.22 in~\cite{kallenberg:sym}, $\zeta = a \lambda$ almost surely for some random variable $a \geq 0$. Hence,
\[
\eta ( \,\cdot\, ) \,=\, \left( \zeta \circ g \right) (K \cap \,\cdot\, ) \,=\, a \left( \lambda \circ g \right) (K \cap \,\cdot\, ) \,=\, a \, \hat{\mu} ( \,\cdot\, ) \quad \mbox{a.s.} \qedhere
\]
\end{proof}

\begin{theorem}
\label{theo-diffuse-rm}
Let $(S,\sS)$ be a Borel space, $\mu$~an atomless and $\sigma$-finite measure on~$S$, and $\xi$ an almost surely diffuse random measure on~$S$ such that $\E \, \xi$ is $\sigma$-finite. Then $\xi$ is $\mu$-swap-invariant if and only if $\xi = \alpha \mu$ almost surely for some $\R_+$-valued integrable random variable~$\alpha$.
\end{theorem}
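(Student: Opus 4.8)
The plan is to prove the easy backward implication directly and to obtain the forward implication by combining Theorem~\ref{swapmeas} with Proposition~\ref{propdiff}, first for finite $\mu$ and then by a localization argument in the $\sigma$-finite case. For the backward direction, suppose $\xi = \alpha\mu$ almost surely with $\alpha \geq 0$ integrable. Given disjoint $A_1,\dots,A_n$ and disjoint $B_1,\dots,B_n$ with $\mu(A_j)=\mu(B_j)<\infty$, I would note that $\E\,\xi(A_j)=\mu(A_j)\,\E\alpha<\infty$ and that $\sum_j u_j\,\xi(A_j)=\alpha\sum_j u_j\,\mu(A_j)$, so that $\E\,\big|\sum_j u_j\,\xi(A_j)\big| = \E\alpha\cdot\big|\sum_j u_j\,\mu(A_j)\big|$; since $\mu(A_j)=\mu(B_j)$ the identical constant appears for the $B_j$, which is exactly condition~\eqref{swap-random-twosets} of Lemma~\ref{swap-random}.

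For the forward direction I would first treat $\mu(S)\in(0,\infty)$. If $\E_\P\,\xi(S)=0$ then $\xi=0=0\cdot\mu$ and we take $\alpha=0$; otherwise $\E_\P\,\xi(S)\in(0,\infty)$ and Theorem~\ref{swapmeas} applies, yielding a random measure $\eta=\xi/\xi(S)$ (and $\eta=0$ on $\{\xi(S)=0\}$) that is $\mu$-exchangeable under $\Q$ with $\mrmdd\Q/\mrmdd\P=\xi(S)/\E_\P\,\xi(S)$, and $\xi=\xi(S)\,\eta$. Writing $\hat\mu=\mu/\mu(S)$, an atomless probability measure, $\eta$ is $\hat\mu$-exchangeable under $\Q$ since equality of $\mu$- and of $\hat\mu$-measures coincide. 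Because $\Q\ll\P$ and $\xi$ is almost surely diffuse under $\P$, the measure $\eta$ is almost surely diffuse under $\Q$, and it is $\sigma$-finite, being $\Q$-almost surely a probability measure. Thus Proposition~\ref{propdiff} gives $\eta=a\hat\mu$ $\Q$-almost surely for some $a\geq0$, and evaluating at $S$ forces $a=\eta(S)=1$, i.e.\ $\eta=\hat\mu$ $\Q$-almost surely. Hence $\xi=\xi(S)\,\hat\mu=\big(\xi(S)/\mu(S)\big)\,\mu$ holds $\Q$-almost surely, and by mutual absolute continuity of $\P$ and $\Q$ on $\{\xi(S)>0\}$ (with triviality on its complement) we get $\xi=\alpha\mu$ $\P$-almost surely with $\alpha=\xi(S)/\mu(S)$, which is integrable.

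For the general $\sigma$-finite case I would choose a partition $(S_j)_{j\geq1}$ of $S$ with $\mu(S_j)<\infty$ and $\E\,\xi(S_j)<\infty$, and set $J=\{j:\mu(S_j)>0\}$; for $j\notin J$ the remark following Definition~\ref{def-si-rm} gives $\xi(S_j)=0$ almost surely. Each restriction $\xi_j=\xi(S_j\cap\,\cdot\,)$ is $\mu_j$-swap-invariant for $\mu_j=\mu(S_j\cap\,\cdot\,)$ (disjoint sets inside $S_j$ are disjoint in $S$), with $\mu_j$ atomless and finite, so the finite case yields $\xi_j=\alpha_j\mu_j$ almost surely with $\alpha_j=\xi(S_j)/\mu(S_j)$ for $j\in J$. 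To obtain a single density I would apply the finite case to the merged piece $\mu_{ij}=\mu((S_i\cup S_j)\cap\,\cdot\,)$, concluding $\xi((S_i\cup S_j)\cap\,\cdot\,)=\big(\xi(S_i\cup S_j)/\mu(S_i\cup S_j)\big)\,\mu((S_i\cup S_j)\cap\,\cdot\,)$; restricting this to $S_i$ and comparing with the single-piece identity forces $\alpha_i=\alpha_j$ almost surely for all $i,j\in J$. Calling this common value $\alpha$, which is integrable since $\E\alpha=\E\,\xi(S_{j_0})/\mu(S_{j_0})<\infty$ for a fixed $j_0\in J$, summation over the partition gives $\xi=\sum_j\xi_j=\alpha\sum_j\mu_j=\alpha\mu$ almost surely.

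The main obstacle is the forward direction, and within it the decisive structural step is recognizing that after the change of measure of Theorem~\ref{swapmeas} the normalized measure $\eta$ is a diffuse, exchangeable random \emph{probability} measure, so that Proposition~\ref{propdiff} collapses it to the deterministic $\hat\mu$; the remaining care is the bookkeeping needed to patch the local densities $\alpha_j$ into one globally defined integrable $\alpha$ in the $\sigma$-finite case.
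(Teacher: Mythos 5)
Your proposal is correct and follows essentially the same route as the paper: the backward direction by direct verification of condition~\eqref{swap-random-twosets}, the forward direction for finite $\mu$ via Theorem~\ref{swapmeas} combined with Proposition~\ref{propdiff} (with $a=1$ forced by $\eta(S)=1$), and the $\sigma$-finite case by restricting to a partition and merging pieces pairwise to identify the local densities. Your explicit handling of the transfer from $\Q$-a.s.\ to $\P$-a.s.\ on $\{\xi(S)>0\}$ is a point the paper passes over silently, but the argument is the same.
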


\begin{proof}
It is easy to see that $\xi$ is $\mu$-swap-invariant if it has the stated form, so we prove only the converse implication.

First note that, if $\xi$ is $\mu$-swap-invariant and $\mu(S) = 0$, then $\xi(S) = 0$ almost surely by Theorem~\ref{siintmeas}.

Next assume that $\mu (S) \in (0,\infty)$ and define the measure $\hat{\mu} = \mu / \mu(S)$. We denote the basic probability measure by~$\P$. We may assume that $\E_\P \xi(S) \in (0,\infty)$. Now define $\Q$ and $\eta$ as in Theorem~\ref{swapmeas}. By this theorem $\eta$ is $\hat{\mu}$-exchangeable under~$\Q$. It follows from the definition of~$\eta$ that it is $\Q$-almost surely diffuse. Hence we can apply Proposition~\ref{propdiff} and obtain $\eta = a \hat{\mu}\,$ $\Q$-almost surely for some random variable $a \geq 0$. Since $\eta(S) = 1 \,$ $\Q$-almost surely, we may set $a = 1$. Therefore $\xi = \alpha \mu \,$ $\P$-almost surely where $\alpha = \xi(S) / \mu(S)$.

Now assume that $\mu(S) = \infty$. Choose a measurable partition $(S_j)_{j \geq 1}$ of~$S$ such that $\mu (S_j) \in (0,\infty)$. By Theorem~\ref{siintmeas} we may assume that $\E \,\xi (S_j) \in (0,\infty)$ for all $j \geq 1$. Define, for $j \geq 1$,
\[
\mu_j ( \, \cdot \,) = \mu (S_j \cap \, \cdot \, ), \quad \quad \xi_j (\, \cdot \,)  = \xi (S_j \cap \, \cdot \, ) \,.
\]
Fix $j \geq 1$. Note that $\xi_j$ is $\mu_j$-swap-invariant. Moreover $\mu_j$ is atomless and $\xi_j$ is almost surely diffuse. Hence it follows by the first part of the proof for finite measure space that
\[
\xi_j = \frac{\xi_j(S)}{\mu_j(S)} \mu_j \quad \mbox{a.s.}
\]
Now fix $i, j \geq 1$ with $i \neq j$ and consider the measurable partition
\[
\sM = \big\{ S_k \, ; \, k \geq 1,\, k \neq i,\, k \neq j \big\} \cup \left\{ S_i \cup S_j \right\}
\]
of~$S$. Define
\[
\mu_{ij} ( \, \cdot \,) = \mu \big( (S_i \cup S_j) \cap \, \cdot \,\big) , \quad \quad \xi_{ij} ( \, \cdot \,) = \xi \big( (S_i \cup S_j) \cap \, \cdot \,\big) \,.
\]
Applying the previous result to the new partition~$\sM$ we obtain, in particular,
\[
\xi (S_i) = \xi_{ij} (S_i) = \frac{\xi_{ij} (S)}{\mu_{ij} (S)} \, \mu_{ij} (S_i) = \frac{\xi_{ij} (S)}{\mu_{ij} (S)} \, \mu (S_i), \quad\quad \xi (S_j) = \frac{\xi_{ij} (S)}{\mu_{ij} (S)} \, \mu (S_j) \quad \mbox{a.s.}
\]
Thus $\alpha = \xi (S_k) / \mu(S_k)$ almost surely for some random variable $\alpha \geq 0$ and all $k \geq 1$, and therefore
\[
\xi = \sum_{j \geq 1} \xi_j = \alpha \sum_{j \geq 1} \mu_j = \alpha \mu \quad \mbox{a.s.}
\]
Since $\E \,\xi (S_1) < \infty$, $\alpha$ is integrable.
\end{proof}

\section{Swap-invariant sequences}
\label{sec-sisequ}

\subsection{Sequences with finitely many values}

\begin{theorem}
\label{lemma-swap-fin}
Let $\xi$ be a swap-invariant random vector whose coordinates take only values $a$ and~$b$ with $\left| a \right| \neq \left| b \right|$. Then $\xi$ is exchangeable.
\end{theorem}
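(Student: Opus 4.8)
The plan is to read off the law of $\xi$ from the swap-invariance condition, exploiting that $\xi$ is supported on the finite set $\{a,b\}^n$. Write $p_v = \P(\xi = v)$ for $v \in \{a,b\}^n$, so that exchangeability of $\xi$ is exactly the assertion that $p_v = p_{\pi v}$ for every $v \in \{a,b\}^n$ and every $\pi \in \perm(n)$, where $\pi v$ is the vector with coordinates $(\pi v)_j = v_{\pi^{-1}(j)}$. Expanding the defining identity $\E\,|\langle u, \xi\rangle| = \E\,|\langle u, \xi \circ \pi\rangle|$ of swap-invariance and relabelling the support points yields
\begin{equation*}
\sum_{v \in \{a,b\}^n} \big(p_v - p_{\pi v}\big)\,\big|\langle u, v\rangle\big| = 0 \qquad \text{for all } u \in \R^n,\ \pi \in \perm(n).
\end{equation*}
The two constant vectors $(a,\dots,a)$ and $(b,\dots,b)$ are fixed by every permutation, so their coefficients vanish automatically; it therefore suffices to treat the \emph{non-constant} $v$, i.e.\ those with at least one coordinate equal to $a$ and at least one equal to $b$. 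The whole problem thus reduces to showing that the functions $u \mapsto |\langle u, v\rangle|$, as $v$ ranges over the non-constant elements of $\{a,b\}^n$, are linearly independent, which forces each remaining coefficient $p_v - p_{\pi v}$ to be zero.

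The first ingredient, and the point where the hypothesis $|a| \neq |b|$ enters decisively, is the claim that distinct non-constant vectors $v, v' \in \{a,b\}^n$ are never parallel. Suppose $v' = \lambda v$. Since $v$ is non-constant it has a coordinate equal to $a$ and one equal to $b$, forcing $\lambda a \in \{a,b\}$ and $\lambda b \in \{a,b\}$; running through the four possibilities for the pair $(\lambda a, \lambda b)$ shows that every case other than $\lambda = 1$ leads to $a = b$, to $b = -a$, or to $a = b = 0$, each of which is excluded by $|a| \neq |b|$. Hence $\lambda = 1$ and $v' = v$. In particular no non-constant support point is antipodal to another, which is precisely the degeneracy that makes the statement fail when $|a| = |b|$ (for instance $a = 1$, $b = -1$, where $v$ and $-v = \pi v$ give the identical function $|\langle u, v\rangle|$).

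The second ingredient is the general fact that if $w_1, \dots, w_m \in \R^n \setminus \{0\}$ are pairwise non-parallel, then the functions $u \mapsto |\langle u, w_i\rangle|$ are linearly independent; this is where the only genuine work lies, and I would prove it by a smoothness argument. Each $|\langle u, w_i\rangle|$ is affine away from the hyperplane $H_i = \{u : \langle u, w_i\rangle = 0\}$, and its gradient jumps by $2 w_i$ across $H_i$; because the $w_i$ are pairwise non-parallel the hyperplanes $H_i$ are distinct. Given a relation $\sum_i c_i |\langle u, w_i\rangle| \equiv 0$ with $c_{i_0} \neq 0$, choose $u_0 \in H_{i_0}$ lying on no other $H_i$ (possible since $H_{i_0} \cap H_i$ is a lower-dimensional subset of $H_{i_0}$ for $i \neq i_0$) and differentiate $t \mapsto \sum_i c_i |\langle u_0 + t w_{i_0}, w_i\rangle|$ at $t = 0$: every term with $i \neq i_0$ is smooth there, while the $i_0$ term contributes a jump of $2 c_{i_0} |w_{i_0}|^2 \neq 0$, contradicting that the sum is identically zero. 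Applying this with the $w_i$ taken to be the non-constant elements of $\{a,b\}^n$—which are nonzero and, by the previous paragraph, pairwise non-parallel—gives $p_v = p_{\pi v}$ for all non-constant $v$, and hence, together with the trivial constant case, the exchangeability of $\xi$.
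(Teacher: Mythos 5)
Your proof is correct, and it takes a genuinely different route from the paper's. The paper first reduces to $0 \le a < b$ by observing that zonoid equivalence is preserved under taking coordinatewise absolute values (Lemma~\ref{zon-equ-abs}), and then works with the expected maxima $z_\xi (k) = \E \max \{ \xi_{k_1}, \ldots, \xi_{k_d} \}$: a marginalization recursion shows these functionals determine the law of a $\{a,b\}^n$-valued vector, and \cite[Theorem 2]{mss:14} shows that swap-invariance makes $z_\xi$ permutation-invariant. You instead expand $\E\,|\langle u, \xi\rangle|$ over the finite support and reduce the whole statement to the linear independence of the functions $u \mapsto |\langle u, v\rangle|$ for pairwise non-parallel nonzero $v$, which you establish by the kink argument across the hyperplanes $\langle u, v\rangle = 0$; this lemma and your case check that distinct non-constant elements of $\{a,b\}^n$ are never parallel are both correct. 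The hypothesis $|a| \neq |b|$ is used in different places: in the paper it enables the reduction to nonnegative values and the lifting of exchangeability from $|\xi|$ back to $\xi$, while in your argument it rules out antipodal (or otherwise parallel) support points, and your remark that antipodal pairs produce identical functions $|\langle \cdot, v\rangle|$ pinpoints exactly why the claim fails for $|a| = |b|$. Your version is self-contained (no appeal to the external zonoid-equivalence theorem) and makes the failure mode transparent; the paper's version connects the result to the expected-maximum functionals that recur in the theory of zonoid equivalence.
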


The following lemma is needed to cover the case of negative values.

\begin{lemma}
\label{zon-equ-abs}
Let $d \geq 1$ and $\xi$ and $\xi^\ast$ be integrable random vectors in~$\R^d$. If $\xi$ and $\xi^\ast$ are zonoid equivalent, then also the vectors $(|\xi_1|, \ldots, |\xi_d|)$ and $(|\xi_1^\ast|, \ldots, |\xi_d^\ast|)$ are zonoid equivalent.
\end{lemma}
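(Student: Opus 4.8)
The plan is to pass from the vectors to their zonoids and to exploit that the coordinatewise absolute value is constant in direction along each line through the origin. Write $h_\xi(u) = \E\,|\langle u,\xi\rangle| = \int_{\R^d} |\langle u,x\rangle|\,\P_\xi(\mrmd x)$ for $u\in\R^d$. Since each $\xi_j$ is integrable, $\|x\|$ is $\P_\xi$-integrable, so writing $x = r\theta$ with $r = \|x\|\ge0$ and $\theta\in\SS^{d-1}$ and using $|\langle u,x\rangle| = r\,|\langle u,\theta\rangle|$ gives
\[
h_\xi(u) = \int_{\SS^{d-1}} |\langle u,\theta\rangle|\,\nu_\xi(\mrmd\theta),
\]
where $\nu_\xi$ is the finite measure on $\SS^{d-1}$ obtained by pushing $\|x\|\,\P_\xi(\mrmd x)$ (restricted to $x\neq0$, since the origin contributes nothing) forward under $x\mapsto x/\|x\|$; only its even part enters because $\theta\mapsto|\langle u,\theta\rangle|$ is even. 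I would record this representation for both $\xi$ and $\xi^\ast$.

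Next I would treat the absolute-value vector. Let $T(x) = (|x_1|,\dots,|x_d|)$. The key observation is that $T(r\theta) = r\,T(\theta)$ for $r\ge0$ and $T(-\theta) = T(\theta)$, so $T$ maps each line through the origin into a single ray and is even. Hence, with the same passage to polar form,
\[
\E\,\Big|\sum_{j=1}^d u_j |\xi_j|\Big| = \int_{\R^d} |\langle u, T(x)\rangle|\,\P_\xi(\mrmd x) = \int_{\SS^{d-1}} |\langle u, T(\theta)\rangle|\,\nu_\xi(\mrmd\theta),
\]
and likewise for $\xi^\ast$ with $\nu_{\xi^\ast}$. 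Thus the first absolute moment of $\sum_j u_j|\xi_j|$ is built from exactly the same measure $\nu_\xi$ as $h_\xi(u)$, only with the integrand $|\langle u,\theta\rangle|$ replaced by the still even, bounded, continuous integrand $|\langle u, T(\theta)\rangle|$.

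It then remains to show that the even parts of $\nu_\xi$ and $\nu_{\xi^\ast}$ coincide. This is where zonoid equivalence is used: by hypothesis $h_\xi = h_{\xi^\ast}$ on $\R^d$, i.e.\ the cosine transforms of $\nu_\xi$ and $\nu_{\xi^\ast}$ agree, and by the uniqueness of the generating measure of a zonoid (injectivity of the cosine transform on even finite measures on $\SS^{d-1}$; see \cite{mss:14}) their even parts must be equal. Since $\theta\mapsto |\langle u, T(\theta)\rangle|$ is even, its $\nu_\xi$- and $\nu_{\xi^\ast}$-integrals therefore agree for every $u\in\R^d$, which is precisely zonoid equivalence of $(|\xi_1|,\dots,|\xi_d|)$ and $(|\xi_1^\ast|,\dots,|\xi_d^\ast|)$.

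The main obstacle is this last step: setting up the spherical generating-measure representation cleanly and invoking injectivity of the cosine transform. One must check finiteness of $\nu_\xi$ (which follows from integrability), discard the atom at the origin, and note that mass carried by the coordinate hyperplanes is harmless because $T$ is continuous and even, so everything factors through the even part of $\nu_\xi$. If one prefers to avoid the cosine-transform machinery explicitly, the same conclusion follows by observing that zonoid equivalence means the zonoids $Z_\xi$ and $Z_{\xi^\ast}$ coincide, that a zonoid determines its unique representing measure, and that this measure in turn determines the zonoid of $T(\xi)$; either way the crux is the uniqueness of the representing measure.
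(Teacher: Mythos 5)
Your proof is correct, and at its core it rests on the same observation as the paper's: the function $f(x) = \bigl|\sum_{j=1}^d u_j |x_j|\bigr|$ is measurable, even, and positively homogeneous of degree one. The paper's entire proof consists of noting this and invoking \cite[Theorem 2]{mss:14}, which asserts that zonoid equivalent integrable vectors satisfy $\E f(\xi) = \E f(\xi^\ast)$ for every such $f$. What you have done is reprove the needed special case of that cited theorem from scratch: the polar representation $h_\xi(u) = \int_{\SS^{d-1}} |\langle u,\theta\rangle|\,\nu_\xi(\mathrm{d}\theta)$ with $\nu_\xi$ the push-forward of $\|x\|\,\P_\xi(\mathrm{d}x)$ to the sphere, the identification of zonoid equivalence with equality of cosine transforms, and the injectivity of the cosine transform on even finite measures are precisely the ingredients behind that theorem. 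The argument is sound, including the points you flag (finiteness of $\nu_\xi$ from integrability, discarding the atom at the origin, passing to even parts). The trade-off is that your version is self-contained but imports the nontrivial classical fact that a zonoid determines its even generating measure (Alexandrov's uniqueness theorem), whereas the paper outsources everything to the reference; in exchange, your computation actually establishes the more general principle that $\E f(\xi)=\E f(\xi^\ast)$ for every even, positively homogeneous, measurable $f$ that is bounded on the sphere, of which the lemma is the instance $f(x)=|\langle u, T(x)\rangle|$.
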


\begin{proof}
Note that for each $u \in \R^d$ the function $f(x) = \big| \sum_{j = 1}^d u_j |x_j| \big|$ on~$\R^d$ is measurable, even, and positively homogeneous. By~\cite[Theorem 2]{mss:14} we obtain $\E f(\xi) = \E f(\xi^\ast)$ for such~$f$, which proves the assertion.
\end{proof}

\begin{proof}[Proof of Theorem~\ref{lemma-swap-fin}]
First assume that $0 \leq a < b$. Fix $n \geq 2$. For $1 \leq d \leq n$ we denote by $N_d$ the set of vectors $k \in \left\{ 1, \ldots, n \right\}^d$ such that $1 \leq k_1 < k_2 < \ldots < k_d$\,. Furthermore we define $N = \bigcup_{1 \leq d \leq n} N_d$\,. For $k \in N$ the dimension of the vector $k$ is denoted by~$|k|$. For a random vector~$\xi$ in~$\left\{ a, b \right\}^n$ define
\[
z_\xi (k) = \E \max \big\{ \xi_{k_1}, \ldots, \xi_{k_{|k|}} \big\}, \; k \in N\,.
\]
We first prove that the distribution of~$\xi$ is uniquely determined by~$z_\xi$\, and then show that $z_\xi$ is invariant under permutations of~$\xi$ if $\xi$ is swap-invariant. The marginal distributions of~$\xi$ are denoted as follows:
\[
p (k ; m) = \P \!\left( \xi_{k_1} = m_1, \ldots, \xi_{k_d} = m_d \right)
\]
where $1 \leq d \leq n$, $k \in N_d$\,, and $m \in \left\{ a, b \right\}^d$. Then, for $k \in N$,
\begin{equation}
\label{equ-prob-fin}
p (k; (a, \ldots, a)) = \frac{b - z_\xi (k)}{b - a}\,.
\end{equation}
We show that all marginal probabilities are functions of~$z_\xi$\,. This is obvious for $k \in N_1$ since \eqref{equ-prob-fin} gives
\[
p (k_1; a) = \frac{b - z_\xi (k_1)}{b - a}, \quad \mbox{and} \quad  p (k_1; b) = 1 - p (k_1; a)\,.
\]
Now let $1 \leq d \leq n-1$ and suppose that the probabilities $p (k; m)$ are known for all $k \in N_d$ and $m \in \left\{ a, b \right\}^d$. Fix $k \in N_{d + 1}$\,. For $1 \leq j \leq d + 1$ and $m \in \left\{ a, b \right\}^{d + 1}$, we obtain by summation over the $j$th coordinate that
\begin{align}
\label{equ-fin-it}
& p \,\big( \left( k_1, \ldots, k_{j-1}, k_{j+1}, \ldots, k_{d+1} \right) ; \left( m_1, \ldots, m_{j-1}, m_{j+1}, \ldots, m_{d+1} \right) \big) \\[.5em]
& = p \, \big( k ; \left( m_1, \ldots, m_{j-1}, a, m_{j+1}, \ldots, m_{d+1} \right) \big) + p \, \big(k ; \left( m_1, \ldots, m_{j-1}, b, m_{j+1}, \ldots, m_{d+1} \right) \big)\,. \nonumber
\end{align}
By \eqref{equ-prob-fin} and \eqref{equ-fin-it} all probabilities for the chosen~$k$ can be calculated iteratively. This shows that all marginal probabilities are determined by~$z_\xi$\,. Now let $\pi \in \perm(n)$. If $\xi$ is swap-invariant, then $z_\xi (k) = \E \max \left\{ \xi_1, \ldots, \xi_{|k|} \right\}$ for $k \in N$ by~\protect{\cite[Theorem 2]{mss:14}}, and therefore $z_\xi = z_{\xi \circ \pi}$\,. By the above argument $\xi$ and $\xi \circ \pi$ have the same distribution. Therefore $\xi$ is exchangeable.

To show the claim for general $a$ and $b$ note that the random vector $\left| \xi \right|$ is swap-invariant by Lemma~\ref{zon-equ-abs}, and hence it is exchangeable by the proof above. Since $\left| a \right| \neq \left| b \right|$, it follows that $\xi$ is exchangeable.
\end{proof}

It is not difficult to construct a random sequence in $\left\{ -1, +1 \right\}$ that is swap-invariant but not exchangeable, e.g.\ using the method of Proposition~\ref{swap-from-ex}. Moreover Example~\ref{ex-swap-from-exch-special} provides a swap-invariant but non-exchangeable sequence whose coordinates take three values. 

\subsection{Construction method}

The next proposition provides a method to construct swap-invariant sequences from another swap-invariant sequence. It is a direct consequence of the definition of swap-invariance.

\begin{proposition}
\label{swap-from-ex}
Let $\eta$ be a sequence that is swap-invariant under a probability measure~$\Q$. Further let $X$ be a random variable with $X \neq 0$ $\Q$-almost surely and $c = \E_\Q \!\left[ \, |X|^{-1} \right] < \infty$. Define another probability measure $\P$ by
\[
\frac{\mrmdd\P}{\mrmdd\Q} = \frac{1}{c \, |X|}\,.
\]
Then the sequence $\xi_j = X \eta_j$ ($j \geq 1$) is swap-invariant under~$\P$.
\end{proposition}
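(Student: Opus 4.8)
The plan is to verify swap-invariance of $\xi$ under $\P$ directly from the definition, i.e.\ to check that for every $n \geq 1$, every $u \in \R^n$, and every $\pi \in \perm(n)$ one has
\[
\E_\P \Big| \sum_{j=1}^n u_j \, \xi_j \Big| = \E_\P \Big| \sum_{j=1}^n u_j \, \xi_{\pi(j)} \Big|.
\]
The decisive observation is that the single random factor $X$ multiplies every coordinate simultaneously, so that $\sum_j u_j \xi_j = X \sum_j u_j \eta_j$ and likewise $\sum_j u_j \xi_{\pi(j)} = X \sum_j u_j \eta_{\pi(j)}$; this is exactly why the construction respects permutations.

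First I would record integrability. Since $\eta$ is swap-invariant under~$\Q$, its coordinates are $\Q$-integrable, and the change-of-measure computation
\[
\E_\P |\xi_j| = \E_\Q\Big[ \frac{1}{c|X|} \, |X \eta_j| \Big] = \frac{1}{c}\, \E_\Q |\eta_j| < \infty,
\]
using $X \neq 0$ $\Q$-almost surely to cancel $|X|$, shows that $\xi$ is a sequence of $\P$-integrable random variables, so the absolute moments below are finite and the notion of swap-invariance applies.

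Next I would carry out the main computation. Transporting the expectation to~$\Q$ via the given density and cancelling $|X|$ against the factor $X$ coming from $\xi = X\eta$ gives
\[
\E_\P \Big| \sum_{j=1}^n u_j \, \xi_j \Big| = \E_\Q\Big[ \frac{1}{c|X|}\, |X| \, \Big| \sum_{j=1}^n u_j \, \eta_j \Big| \Big] = \frac{1}{c}\, \E_\Q \Big| \sum_{j=1}^n u_j \, \eta_j \Big|,
\]
and the identical computation applied to the permuted indices yields $\E_\P \big| \sum_j u_j \xi_{\pi(j)} \big| = \frac{1}{c}\, \E_\Q \big| \sum_j u_j \eta_{\pi(j)} \big|$. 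The swap-invariance of~$\eta$ under~$\Q$ gives $\E_\Q \big| \sum_j u_j \eta_j \big| = \E_\Q \big| \sum_j u_j \eta_{\pi(j)} \big|$, and combining the three identities proves the claim.

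There is essentially no obstacle here: the statement is a direct consequence of the definition, and the only point requiring care is the cancellation $|X|/|X| = 1$, which is legitimate precisely because $X \neq 0$ holds $\Q$-almost surely (equivalently $\P$-almost surely). The condition $c < \infty$ is what guarantees that $\P$ is a genuine probability measure, and it is otherwise not used in the argument.
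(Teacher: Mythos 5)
Your proof is correct and is precisely the ``direct consequence of the definition'' that the paper invokes without writing out: the paper gives no explicit argument for this proposition, and your change-of-measure computation with the cancellation $|X|/|X|=1$ (valid since $X\neq 0$ $\Q$-a.s.) is exactly the intended verification. The integrability check $\E_\P|\xi_j| = c^{-1}\E_\Q|\eta_j| < \infty$ is a worthwhile addition that the paper leaves implicit.
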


Note that in Proposition~\ref{swap-from-ex} we have $\P \sim \Q$. By Theorem~\cite[Theorem 17]{mss:14} there exists a random variable~$Y$ such that $\E_\Q |Y| < \infty$ and
\begin{equation}
\label{conv-eta-1}
n^{-1} \sum_{j = 1}^n \eta_j \to Y \quad\quad \mbox{a.s.}
\end{equation}
Then
\begin{equation}
\label{conv-xi-1}
n^{-1} \sum_{j = 1}^n \xi_j \to X Y \quad\quad \mbox{a.s.}
\end{equation}
If the convergence~(\ref{conv-eta-1}) is in~$L^1(\Q)$ (which is the case, for example, if $\eta$ is exchangeable and integrable under~$\Q$), then the convergence~(\ref{conv-xi-1}) is in~$L^1(\P)$.

\begin{example}[Division by first member of i.i.d.\ sequence]
\label{ex-swap-from-exch}
Let $\eta$ be a random sequence that is i.i.d.\ under~$\Q$ with $\E_\Q |\eta_1| < \infty$ and $\eta_1 \neq 0$ $\Q$-almost surely. Set $X = \eta_1^{-1}$ in Proposition~\ref{swap-from-ex}. Then the sequence $\xi_1 = 1$, $\xi_j = \eta_j / \eta_1 \; (j \geq 2)$ is swap-invariant under~$\P$. $\xi$~is not exchangeable under~$\P$. For the exchangeability of $\xi$ under $\P$ would imply that $\xi_2 \deq 1$ under~$\P$ and~$\Q$, whence $\eta_1 = \eta_2$ $\Q$-almost surely.
\end{example}

The following example shows that there exists a swap-invariant but not exchangeable sequence that takes only three values, in contrast to Theorem~\ref{lemma-swap-fin}.

\begin{example}
\label{ex-swap-from-exch-special}
Consider the special case of Example~\ref{ex-swap-from-exch} where $\eta_1$ takes values $1$ and $2$ with probability~$1/2$. We obtain the following finite-dimensional distributions of~$\xi$:
\begin{eqnarray*}
\lefteqn{ \P \!\left( \xi_2 = m_2, \ldots, \xi_n = m_n \right)}\\
 & = & \frac{2^{1 - n}}{3} \Big( \one \big\{ \, m_2, \ldots, m_n \in \left\{ 1, 2 \right\} \big\} + 2 \cdot \one \big\{ \, m_2, \ldots, m_n \in \left\{ 1/2, 1 \right\} \big\} \Big)
\end{eqnarray*}
where $n \geq 2$ and $m_2, \ldots, m_n \in \left\{ 1/2, 1, 2 \right\}$. An explicit calculation yields
\[
\E_\P \bigg| \sum_{j = 1}^n u_j \xi_j \bigg| \; = \; \frac{2^{1 - n}}{3} \sum_{ m_1, \ldots, \, m_n \in \left\{1, 2 \right\} } \bigg| \sum_{j = 1}^n u_j m_j \bigg|\,.
\]
So $\xi$ is swap-invariant under~$\P$.
\end{example}

\subsection{Ergodic representation}

As stated in~\cite[Theorem 17]{mss:14} for each swap-invariant sequence, the mean converges almost surely to an integrable random variable. We now demonstrate that, if the ergodic limit is different from zero and if the convergence is in $L^1$, the limit can be used to characterize swap-invariant sequences as scaled exchangeable sequences under another probability measure. 

\begin{theorem}
\label{ex-from-swap}
Let $\xi$ be a random sequence that is swap-invariant under a probability measure~$\P$ such that $n^{-1} \sum_{j = 1}^n \xi_j \to X\,$ $\P$-almost surely and in~$L^1(\P)$ as $n \to \infty$ with $\P(X \neq 0) = 1$. Then there exists a random sequence $\eta$ that is exchangeable and integrable under the probability measure $\Q$ defined by
\[
\frac{\mrmdd\Q}{\mrmdd\P} = \frac{ |X| }{\E_\P |X|}
\]
such that $\xi = X \eta\,$ $\P$-almost surely. 
\end{theorem}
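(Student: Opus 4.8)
The plan is to follow the template of Theorem~\ref{swapmeas}, with the ergodic limit $X$ playing the role that the total mass $\xi(S)$ played there. The obstruction to a literal copy is that $|X|$, unlike $\xi(S)$, is not the $\ell^1$-norm of any finite coordinate vector, so Lemma~\ref{lemma-norm-ze} cannot be applied to $(\xi_1,\dots,\xi_m)$ directly. I will instead realize $X$ as an additional coordinate by an averaging limit, and then transfer the resulting zonoid equivalence across the change of measure by integrating a homogeneous test function. To set up, since $\P(X\neq0)=1$ and $\mrmdd\Q/\mrmdd\P=|X|/\E_\P|X|>0$ $\P$-a.s., the measures $\Q$ and $\P$ are equivalent, and I may define $\eta_j=\xi_j/X$ on $\{X\neq0\}$, so that $\xi=X\eta$ holds $\P$-a.s. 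Each $\xi_j$ is $\P$-integrable by swap-invariance, whence $\E_\Q|\eta_j|=\E_\P|\xi_j|/\E_\P|X|<\infty$ and $\eta$ is $\Q$-integrable. It then remains to prove that $\eta$ is exchangeable under $\Q$; because every finite permutation of $\N$ fixes all but finitely many indices, it suffices to show for each $m\ge1$ and each $\sigma\in\perm(m)$ that $(\eta_1,\dots,\eta_m)\deq(\eta_{\sigma(1)},\dots,\eta_{\sigma(m)})$ under $\Q$.

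The key first step is to produce, from swap-invariance of $\xi$, the zonoid equivalence under $\P$ of $(\xi_1,\dots,\xi_m,X)$ and $(\xi_{\sigma(1)},\dots,\xi_{\sigma(m)},X)$ in $\R^{m+1}$. For $N>m$ I set $M_N'=(N-m)^{-1}\sum_{j=m+1}^N\xi_j$. Extending $\sigma$ to the permutation of $\{1,\dots,N\}$ that fixes $\{m+1,\dots,N\}$ and applying swap-invariance to $(\xi_1,\dots,\xi_N)$ with test vectors $u\in\R^N$ that are constant on the tail block, I obtain that $(\xi_1,\dots,\xi_m,M_N')$ and $(\xi_{\sigma(1)},\dots,\xi_{\sigma(m)},M_N')$ are zonoid equivalent under $\P$. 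From $M_N'=\tfrac{N}{N-m}M_N-\tfrac1{N-m}\sum_{j\le m}\xi_j$ together with the hypothesis $M_N\to X$ in $L^1(\P)$ I conclude $M_N'\to X$ in $L^1(\P)$; since $v\mapsto|\langle u,\,\cdot\,\rangle|$ is $1$-Lipschitz in the last coordinate, passing to the limit $N\to\infty$ in the zonoid identity delivers the claimed zonoid equivalence of the two $\R^{m+1}$-vectors with last coordinate $X$.

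The second step transfers this across the tilt. Given a bounded continuous $f:\R^m\to\R$, I define $g(x,t)=|t|\,f(x/t)$ for $t\neq0$ and $g(x,0)=0$; boundedness of $f$ makes $g$ continuous at the hyperplane $\{t=0\}$, it is positively homogeneous of degree one, and $|g(\xi_1,\dots,\xi_m,X)|\le\|f\|_\infty\,|X|$ is $\P$-integrable. By \cite[Theorem~2]{mss:14}, zonoid equivalence forces $\E_\P\,g(\xi_1,\dots,\xi_m,X)=\E_\P\,g(\xi_{\sigma(1)},\dots,\xi_{\sigma(m)},X)$; since $g(\xi_1,\dots,\xi_m,X)=|X|\,f(\eta_1,\dots,\eta_m)$ and likewise for the permuted vector, dividing by $\E_\P|X|$ yields $\E_\Q f(\eta_1,\dots,\eta_m)=\E_\Q f(\eta_{\sigma(1)},\dots,\eta_{\sigma(m)})$. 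Letting $f$ range over bounded continuous functions gives the required equality in distribution, and hence exchangeability of $\eta$ under $\Q$.

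I expect the main work to lie in the first step: correctly embedding $X$ as a coordinate through the tail averages $M_N'$ while simultaneously preserving the zonoid identity (via the right family of test vectors) and the integrability needed to pass to the limit (via the $L^1$ convergence). Once the $\R^{m+1}$-vectors with last coordinate $X$ are in hand, the passage to $\Q$ is the same homogeneity argument already used for Lemma~\ref{zon-equ-abs} and Theorem~\ref{lemma-swap-fin}, now applied to the degree-one function $g$ that encodes the scaling by $|X|$.
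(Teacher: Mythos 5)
Your argument is correct, and its first half is essentially the paper's: the paper likewise realizes $X$ as an extra coordinate by writing $\E_{\P} \big| m^{-1}\sum_{k=1}^m \xi_k + \sum_{j=1}^n u_j\xi_j \big|$, absorbing the average into the coefficient vector $(u_1+m^{-1},\ldots,u_n+m^{-1},m^{-1},\ldots,m^{-1})$, permuting by swap-invariance, and letting $m\to\infty$ via the $L^1$-convergence to obtain $\E_{\P}\big|X+\sum_j u_j\xi_j\big| = \E_{\P}\big|X+\sum_j u_{\pi(j)}\xi_j\big|$ — the same identity you package as zonoid equivalence of the two vectors in $\R^{m+1}$ with last coordinate $X$ (your tail average $M_N'$ versus the full average is an immaterial variation). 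Where you genuinely diverge is the final step: the paper at this point divides by $\E_{\P}|X|$, changes measure to get $\E_\Q\big|1+\sum_j u_j\eta_j\big| = \E_\Q\big|1+\sum_j u_{\pi(j)}\eta_j\big|$, and then invokes Hardin's theorem \cite[Theorem~1.1]{har:81} to conclude $\eta\deq\eta\circ\pi$ under $\Q$. You instead stay with the untilted vectors and feed the even, positively $1$-homogeneous, continuous function $g(x,t)=|t|f(x/t)$ (with $g(x,0)=0$) into \cite[Theorem~2]{mss:14}, which simultaneously performs the change of measure (the factor $|X|$) and tests the distribution of $\eta$ against arbitrary bounded continuous $f$. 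This makes the last step self-contained — it is in effect a direct proof of the instance of Hardin's lift-zonoid determinacy that is needed here — at the cost of leaning on the measurable/homogeneous version of \cite[Theorem~2]{mss:14}, which the paper itself already uses in Lemma~\ref{zon-equ-abs}, so nothing new is assumed. All the supporting checks (equivalence of $\P$ and $\Q$, integrability of $\eta$ under $\Q$, integrability of $g(\xi_1,\ldots,\xi_m,X)$, and the $L^1$ passage to the limit in the zonoid identity) are in order.
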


\begin{proof}
Define
\[
\eta_j = \left\{ \begin{array}{ll} \xi_j / X & \quad \mathrm{on} \;\left\{X \neq 0 \right\} \\[1em]
 0 & \quad \mathrm{on} \;\left\{X = 0 \right\} \end{array}, \quad j \geq 1.
\right.
\]
For $j \geq 1$, $\E_{\P} | \xi_j | < \infty$ implies $\E_{\Q} | \eta_j | < \infty$. Let $n \geq 1$, $u \in \R^n$, and $\pi \in \perm(n)$. For $m \geq n$, the swap-invariance of~$\xi$ under~$\P$ yields
\begin{eqnarray*}
\E_{\P} \bigg| m^{-1} \sum_{k = 1}^m \xi_k + \sum_{j = 1}^n u_j \xi_j \bigg| & = & \E_{\P}  \bigg| m^{-1} \sum_{k = n+1}^m \xi_k + \sum_{j = 1}^n \left(u_j + m^{-1}\right) \xi_j \bigg| \\
& = & \E_{\P}  \bigg| m^{-1} \sum_{k = n+1}^m \xi_k + \sum_{j = 1}^n \left(u_{\pi(j)} + m^{-1}\right) \xi_j \bigg| \\
& = & \E_{\P} \bigg| m^{-1} \sum_{k = 1}^m \xi_k + \sum_{j = 1}^n u_{\pi(j)} \xi_j \bigg|\,.
\end{eqnarray*}
Letting $m \to \infty$ we obtain
\begin{equation*}
\E_{\P} \bigg| X + \sum_{j = 1}^n u_j \xi_j \bigg| = \E_{\P} \bigg| X + \sum_{j = 1}^n u_{\pi(j)} \xi_j \bigg|\,.
\end{equation*}
After change of measure this gives $\E_\Q \big| 1 + \sum_{j = 1}^n u_j \eta_j \big| = \E_\Q \big| 1 + \sum_{j = 1}^n u_{\pi(j)} \eta_j \big|$. Since this holds for all $u \in \R^n$, it follows by~\cite[Theorem 1.1]{har:81} that $\eta$ is exchangeable under~$\Q$.
\end{proof}

\begin{example}
\label{ex-swap-lognormal}
Let $(Z_j)_{j \geq 1}$ be i.i.d.\ standard normal random variables under a probability measure~$\P$, and let $(b_j)_{j \geq 1}$ be a sequence of real numbers such that $\beta = \sum_{j \geq 1} b_j^2 < \infty$. Define a random sequence~$(\xi)_{j \geq 1}$ by $\xi_j = \exp \zeta_j$ where
\[
\zeta_j = Z_j + \sum_{k = 1}^\infty b_k Z_k + \mu_j \, , \quad \quad \mu_j = - \frac{1}{2} \left( 1 + \beta  + 2 b_j \right).
\]
Note that $\xi_i \deq \xi_j$ if and only if $b_i = b_j$\,. In~\protect{\cite[Examples 15 and 25]{mss:14}} it is shown that $\xi$ is swap-invariant under~$\P$ and that the limit
\[
X = \lim_{n \to \infty} n^{-1} \sum_{j = 1}^n \xi_j = \exp \left( \sum_{k = 1}^\infty b_k Z_k - \frac{1}{2} \beta \right)
\]
exists $\P$-almost surely and in~$L^1(\P)$. By Theorem~\ref{ex-from-swap} the random sequence
\[
\eta_j = \frac{\xi_j}{X} = \exp \left(Z_j - b_j - \frac{1}{2} \right)
\]
is exchangeable under~$\Q$.
\end{example}

In the context of Theorem~\ref{ex-from-swap} Birkhoff's ergodic theorem implies a relation between $\xi_1$ and $X$ that we state in the following corollary. As usual the $\sigma$-algebra generated by the open sets in~$\R^\infty$ is denoted by~$\sB(\R^\infty)$, the tail $\sigma$-algebra of~$\sB(\R^\infty)$ by~$\sT$, the shift-invariant $\sigma$-algebra by~$\sI$, and the $\sigma$-algebra of sets that are invariant under all finite permutations by~$\sE$. For a random sequence~$\eta$ we define the corresponding $\sigma$-algebras on the basic probability space by
\[
\sT_\eta = \eta^{-1} \sT, \quad\quad \sI_\eta = \eta^{-1} \sI, \quad\quad \sE_\eta = \eta^{-1} \sE\,.
\]
It is well known that, if $\eta$ is exchangeable, then $\sT_\eta = \sI_\eta = \sE_\eta$\, almost surely, i.e.\ their completions are equal (see for example \protect{\cite[Corollary 1.6]{kallenberg:sym}}). The sign of~$x \in \R$ is denoted by $\mathrm{sign}(x)$.

\begin{corollary}
\label{cor-si-signx}
Under the conditions of Theorem~\ref{ex-from-swap}
\[
\E_\P \! \left[ \mathrm{sign} (X) \xi_j \middle\vert \sT_\eta \right] = \E_\P \! \left[ \, |X| \, \middle\vert \sT_\eta \right] \quad\quad \mbox{a.s.}\,,\, j \geq 1,
\]
where $\eta$ is defined as in Theorem~\ref{ex-from-swap}.
\end{corollary}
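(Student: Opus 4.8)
The plan is to reduce the asserted identity to a statement about the exchangeable sequence $\eta$ under $\Q$, and then transfer that statement back to $\P$ by a change-of-measure argument. Since $\xi = X\eta$ $\P$-almost surely and $\P(X\neq 0)=1$, we have $\mathrm{sign}(X)\,\xi_j = \mathrm{sign}(X)\,X\,\eta_j = |X|\,\eta_j$ almost surely, so the claim is equivalent to
\[
\E_\P\!\left[\, |X|\,\eta_j \,\middle\vert\, \sT_\eta \right] = \E_\P\!\left[\, |X| \,\middle\vert\, \sT_\eta \right] \quad \mbox{a.s.}
\]
I would establish this by first proving that $\E_\Q[\eta_j \mid \sT_\eta] = 1$ almost surely for every $j\geq 1$.

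To identify this conditional expectation I would determine the ergodic limit of $\eta$. Writing $n^{-1}\sum_{k=1}^n \xi_k = X\,\big(n^{-1}\sum_{k=1}^n \eta_k\big)$ and dividing the almost sure limit $n^{-1}\sum_{k=1}^n \xi_k \to X$ by $X\neq 0$ gives $n^{-1}\sum_{k=1}^n \eta_k \to 1$ $\P$-almost surely. Because the density $\mrmdd\Q/\mrmdd\P = |X|/\E_\P|X|$ is strictly positive almost surely, we have $\P\sim\Q$, so the same limit holds $\Q$-almost surely. Since $\eta$ is exchangeable, hence stationary, and integrable under $\Q$, Birkhoff's ergodic theorem yields $n^{-1}\sum_{k=1}^n\eta_k \to \E_\Q[\eta_1\mid\sI_\eta]$ $\Q$-almost surely, whence $\E_\Q[\eta_1\mid\sI_\eta]=1$. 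Using the stated almost sure identity $\sT_\eta=\sI_\eta=\sE_\eta$ for exchangeable sequences together with the permutation invariance of $\sE_\eta$, I would conclude $\E_\Q[\eta_j\mid\sT_\eta]=\E_\Q[\eta_1\mid\sT_\eta]=1$ for every $j$.

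Finally I would transfer this back to $\P$ via the Bayes formula for conditional expectations. Setting $L = \mrmdd\Q/\mrmdd\P = |X|/\E_\P|X|$, the formula gives
\[
\E_\P\!\left[ \eta_j L \,\middle\vert\, \sT_\eta \right] = \E_\Q\!\left[ \eta_j \,\middle\vert\, \sT_\eta \right] \, \E_\P\!\left[ L \,\middle\vert\, \sT_\eta \right] = \E_\P\!\left[ L \,\middle\vert\, \sT_\eta \right] \quad \mbox{a.s.}
\]
Multiplying through by the constant $\E_\P|X|$ removes the normalisation and yields exactly $\E_\P[|X|\eta_j\mid\sT_\eta] = \E_\P[|X|\mid\sT_\eta]$, which is the claim.

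The main obstacle is the bookkeeping around the two probability measures: one must ensure that each almost sure statement is transported between $\P$ and $\Q$ in the correct direction, which is legitimate precisely because $|X|>0$ almost surely forces $\P\sim\Q$. One must also note that $\sT_\eta$ is one and the same sub-$\sigma$-algebra of the basic space under both measures, and justify the Bayes product identity above, which holds $\P$-almost surely since $\E_\P[L\mid\sT_\eta]>0$. The identification of the Birkhoff limit with $\E_\Q[\eta_1\mid\sI_\eta]$ and the passage from $\sI_\eta$ to $\sT_\eta$ are then routine given the cited equivalence of the tail, invariant, and exchangeable $\sigma$-algebras.
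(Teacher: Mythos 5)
Your proposal is correct and follows essentially the same route as the paper: identify the ergodic limit of $\eta$ under $\Q$ as $1$ by dividing $n^{-1}\sum_k\xi_k\to X$ by $X\neq 0$, equate it with $\E_\Q[\eta_j\mid\sT_\eta]$ via the ergodic theorem for exchangeable sequences, and convert to $\P$-conditional expectations by the change-of-measure formula. The only cosmetic difference is that you pass through Birkhoff's theorem and $\sI_\eta$ before invoking the a.s.\ identity of the tail, invariant, and exchangeable $\sigma$-algebras, whereas the paper cites the exchangeable ergodic theorem with the tail $\sigma$-algebra directly.
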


\begin{proof}
First note that $\P$ and $\Q$ are equivalent, so we may say that an equality or convergence holds `almost surely' without specifying the probability measure. Now on the one hand the definition of~$\eta$ implies that $n^{-1} \sum_{k = 1}^n \eta_k \to 1$ almost surely as $n \to \infty$. On the other hand \protect{\cite[Theorem 10.6]{kallenberg:fou}} yields
\[
n^{-1} \sum_{k = 1}^n \eta_k \to \E_\Q \! \left[ \eta_j \middle\vert \sT_\eta \right] = \frac{\E_\P \! \left[ \, |X| \eta_j \middle\vert \sT_\eta \right]}{\E_\P \!\left[ \, |X| \, \middle\vert \sT_\eta \right]} = \frac{\E_\P \! \left[ \mathrm{sign}(X) \xi_j \middle\vert \sT_\eta \right]}{\E_\P \! \left[ \, |X| \, \middle\vert \sT_\eta \right]} \quad\quad \mbox{a.s.}\,,\, j \geq 1. \qedhere
\]
\end{proof}

\subsection{$p$-norm representation}
\label{subsec-norms}

The connection between swap-invariant sequences and exchangeable sequences established in the preceding section is restricted to cases where the ergodic limit is attained in~$L^1$ and is almost surely different from zero. Now we present another method, using $p$-norms, where the second condition can be dropped. However $L^1$-convergence is still required.

For a sequence~$x \in \R^\infty$ the vector of the first $n$ components of~$x$ is denoted by $x^{(n)} = (x_1, \ldots, x_n)$. We define, for $x \in \R^\infty$, $n \geq 1 $, and $1 \leq p < \infty$,
\[
\left\| x \right\|_p^{(n)} = \bigg( n^{-1}\sum_{j=1}^n \left| x_j \right|^p \bigg)^{1/p}, \quad \quad \left\| x \right\|_p = \limsup_{n \to \infty} \left\| x \right\|_p^{(n)} \in \overline{\R}_+ \,,
\]
and
\[
\left\| x \right\|_\infty^{(n)} = \max\left\{ \left| x_j \right| ; 1 \leq j \leq n \right\}, \quad \quad \left\| x \right\|_\infty = \limsup_{n \to \infty} \left\| x \right\|_\infty^{(n)} \in \overline{\R}_+\,.
\]

For the proof of the main result in this section, Theorem~\ref{exch-from-sym}, the lemma below is required. For its proof we need the following convergence result.

\begin{proposition}
\label{prop-l1-conv-zero}
Let $X, Y, X_n\,, Y_n$ ($n \geq 1$) be non-negative random variables such that $X_n \to X$ as $n \to \infty$ almost surely and in~$L^1$, $Y_n$ and $Y$ are bounded by $K$ for some $K > 0$, and $Y_n \to Y$ almost surely on the event~$\left\{ X > 0 \right\}$. Then $X_n Y_n \to X Y$ almost surely and in~$L^1$.
\end{proposition}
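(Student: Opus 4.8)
The plan is to dispose of the almost sure convergence first, which is elementary, and then to concentrate on the $L^1$ statement, where the real content lies.

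For the almost sure convergence I would split $\Omega$ according to the value of the limit~$X$. On the event $\left\{ X > 0 \right\}$ both factors converge, $X_n \to X$ and $Y_n \to Y$, so $X_n Y_n \to XY$. On the complementary event $\left\{ X = 0 \right\}$ I only have $X_n \to 0$, but the uniform bound $0 \le Y_n \le K$ then forces $0 \le X_n Y_n \le K X_n \to 0 = XY$. Hence $X_n Y_n \to XY$ almost surely on all of~$\Omega$.

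For the $L^1$ convergence I would start from the pointwise estimate
\[
\left| X_n Y_n - XY \right| \;\le\; X_n \left| Y_n - Y \right| + Y \left| X_n - X \right| \;\le\; X_n \left| Y_n - Y \right| + K \left| X_n - X \right| .
\]
The last summand tends to~$0$ in $L^1$ at once, since $\E \big[ K \left| X_n - X \right| \big] = K \, \E \left| X_n - X \right| \to 0$ by hypothesis. Everything therefore reduces to proving that $\E \big[ X_n \left| Y_n - Y \right| \big] \to 0$.

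This term is the heart of the matter, precisely because $Y_n$ is \emph{not} assumed to converge to~$Y$ off the event $\left\{ X > 0 \right\}$. I would recover the pointwise limit by the same case split: on $\left\{ X > 0 \right\}$ we have $\left| Y_n - Y \right| \to 0$ while $X_n \to X < \infty$, and on $\left\{ X = 0 \right\}$ the bound $X_n \left| Y_n - Y \right| \le 2K X_n \to 0$ takes over, so $X_n \left| Y_n - Y \right| \to 0$ almost surely. To upgrade this to $L^1$ I cannot use a fixed integrable dominating function; instead I would invoke dominated convergence with a variable dominating sequence (Pratt's lemma), applied to $0 \le X_n \left| Y_n - Y \right| \le 2K X_n$, whose dominating sequence $2K X_n$ itself converges almost surely and in~$L^1$ to $2KX$. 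Equivalently, the $L^1$-convergence of $X_n$ makes the family $\left\{ X_n \right\}$ uniformly integrable, hence so is the dominated family $\left\{ X_n \left| Y_n - Y \right| \right\}$, and Vitali's theorem then turns the almost sure convergence into $L^1$-convergence. The single genuine obstacle is the possible failure of $Y_n \to Y$ on $\left\{ X = 0 \right\}$; the resolution is that the factor $X_n$ vanishes there in the limit, so that beyond the elementary estimates one needs only the varying-dominator form of dominated convergence (or the uniform integrability argument) to conclude.
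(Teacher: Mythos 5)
Your proof is correct, but it runs along a slightly different track than the paper's. The paper expands the difference as $X_nY_n-XY=(X_n-X)(Y_n-Y)+X(Y_n-Y)+(X_n-X)Y$; the first and third terms are killed by $2K\,\E\left|X_n-X\right|$ and $K\,\E\left|X_n-X\right|$, and the middle term $\E\bigl[\left|X(Y_n-Y)\right|\one\{X>0\}\bigr]$ has the \emph{fixed} integrable dominator $2KX$, so plain dominated convergence finishes it. You instead use the coarser two-term split $X_n(Y_n-Y)+Y(X_n-X)$, which merges the paper's first two terms into $X_n\left|Y_n-Y\right|$; since the factor there is $X_n$ rather than $X$, no fixed integrable dominator is available, and you correctly compensate by invoking Pratt's lemma with the varying dominators $2KX_n$ (or, equivalently, uniform integrability of $\{X_n\}$ plus Vitali). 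Both arguments are sound and both hinge on the same observation that the vanishing of $X$ (resp.\ $X_n$) on $\{X=0\}$ neutralizes the failure of $Y_n\to Y$ there; the paper's extra add-and-subtract buys it the luxury of ordinary dominated convergence, while your version is shorter at the cost of a marginally heavier convergence theorem. Your separate treatment of the almost sure convergence by the case split on $\{X>0\}$ versus $\{X=0\}$ is also correct (the paper leaves this part implicit).
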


\begin{proof}
We have
\begin{eqnarray*}
\E \left| X_n Y_n - X Y \right| & = & \E \left| (X_n - X) (Y_n -Y) \right| \, + \, \E \left| X (Y_n - Y) \right| \, + \, \E \left| (X_n - X) Y \right| \\[.5em]
 & \leq & 2K \, \E \left| X_n - X \right| \, + \, \E \big[ \left| X (Y_n - Y) \right| \,\one \!\left\{ X > 0 \right\} \big] \, + \, K \, \E \left| X_n - X \right|\,.
\end{eqnarray*}
All three terms on the right-hand side converge to zero as $n \to \infty$.
\end{proof}

\begin{lemma}
\label{prop-norm}
Fix $p \in [1,\infty]$. Let $\xi_1\,$, $\xi_2$ be two symmetric sequences of random variables such that $\xi_1^{(n)}$ and $\xi_2^{(n)}$ are zonoid equivalent under~$\P$ for each $n \geq 1$, and
\[
\left\| \xi_i \right\|_p^{(n)} \to \left\| \xi_i \right\|_p \quad\quad \mbox{$\P$-a.s.\ and in~$L^1(\P)$ as $n \to \infty$}\,.
\]
Then $\E_\P \!\left\| \xi_1 \right\|_p = \E_\P \!\left\| \xi_2 \right\|_p$\,, and if $\E_\P \!\left\| \xi_1 \right\|_p > 0$, the probability measures $\Q_i$ defined by
\[
\frac{\mrmdd\Q_i}{\mrmdd\P} = \frac{\left\| \xi_i \right\|_p}{\E_\P \!\left\| \xi_i \right\|_p},\quad i = 1, 2,
\]
satisfy $\Q_1 \big( \xi_1 / \!\left\| \xi_1 \right\|_p \in B \big) = \Q_2 \big( \xi_2 / \!\left\| \xi_2 \right\|_p \in B \big)$ for all $B \in \sB(\R^\infty)$.
\end{lemma}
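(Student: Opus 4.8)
The plan is to reduce everything to the finite-dimensional statement Lemma~\ref{lemma-norm-ze}, applied to the truncations $\xi_1^{(n)}$ and $\xi_2^{(n)}$, and then to let $n \to \infty$. For each fixed $n$ the map $x \mapsto \|x\|_p^{(n)}$ is a genuine norm on~$\R^n$ (a rescaled $\ell^p$-norm for $p < \infty$, the sup-norm for $p = \infty$), so Lemma~\ref{lemma-norm-ze} is available with this norm; note that integrability of the $\xi_i^{(n)}$ is implicit in the assumption that they are zonoid equivalent. The whole argument thus consists in extracting the correct finite-$n$ identities and controlling their limits, where the $L^1$-convergence of the normalized norms is the key tool.

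First I would prove the equality of expected norms. Fix $n$. If $\E_\P\|\xi_i\|_p^{(n)} > 0$ for $i = 1, 2$, then Lemma~\ref{lemma-norm-ze} directly gives $\E_\P\|\xi_1\|_p^{(n)} = \E_\P\|\xi_2\|_p^{(n)}$; in the degenerate case $\|\xi_1^{(n)}\|_p^{(n)} = 0$ $\P$-a.s., i.e.\ $\xi_1^{(n)} = 0$ a.s., zonoid equivalence forces $\E_\P|\langle u, \xi_2^{(n)}\rangle| = 0$ for all $u$, hence $\xi_2^{(n)} = 0$ a.s., and the equality holds trivially. Letting $n \to \infty$ and using the assumed $L^1(\P)$-convergence $\|\xi_i\|_p^{(n)} \to \|\xi_i\|_p$ yields $\E_\P\|\xi_1\|_p = \E_\P\|\xi_2\|_p$, the first assertion. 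For the second, assume $\E_\P\|\xi_1\|_p > 0$; then also $\E_\P\|\xi_2\|_p > 0$ and $\E_\P\|\xi_i\|_p^{(n)} > 0$ for all large $n$. Fix $d \geq 1$ and a bounded continuous $\phi : \R^d \to \R$. For $n \geq d$, feeding the cylinder function $x \mapsto \phi(x_1, \ldots, x_d)$ into the distributional conclusion of Lemma~\ref{lemma-norm-ze} (applied in dimension $n$) and writing out the densities $\mrmdd\Q_i^{(n)}/\mrmdd\P = \|\xi_i\|_p^{(n)}/\E_\P\|\xi_i\|_p^{(n)}$ gives
\[
\frac{\E_\P \big[ \|\xi_1\|_p^{(n)}\, \phi\big(\xi_1^{(d)}/\|\xi_1\|_p^{(n)}\big) \big]}{\E_\P\|\xi_1\|_p^{(n)}} = \frac{\E_\P \big[ \|\xi_2\|_p^{(n)}\, \phi\big(\xi_2^{(d)}/\|\xi_2\|_p^{(n)}\big) \big]}{\E_\P\|\xi_2\|_p^{(n)}},
\]
where I adopt the convention $\xi_i/\|\xi_i\|_p := 0$ on $\{\|\xi_i\|_p = 0\}$ (and likewise for the finite-$n$ normalization); this is harmless since the prefactor $\|\xi_i\|_p^{(n)}$ vanishes there.

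The main obstacle will be passing to the limit in the numerators, because on the event $\{\|\xi_i\|_p = 0\}$ the normalized truncations $\xi_i^{(d)}/\|\xi_i\|_p^{(n)}$ need not converge. This is precisely the situation covered by Proposition~\ref{prop-l1-conv-zero}: taking $X_n = \|\xi_i\|_p^{(n)}$, $X = \|\xi_i\|_p$, $Y_n = \phi\big(\xi_i^{(d)}/\|\xi_i\|_p^{(n)}\big) + \|\phi\|_\infty$ and $Y = \phi\big(\xi_i^{(d)}/\|\xi_i\|_p\big) + \|\phi\|_\infty$, all hypotheses hold, namely $X_n \to X$ a.s.\ and in $L^1$, $Y_n$ and $Y$ are bounded by $2\|\phi\|_\infty$, and $Y_n \to Y$ a.s.\ on $\{X > 0\}$ by continuity of $\phi$ (there $\|\xi_i\|_p^{(n)} \to \|\xi_i\|_p > 0$). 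The proposition yields $X_n Y_n \to XY$ in $L^1$; subtracting the term $\|\phi\|_\infty\,\E_\P X_n \to \|\phi\|_\infty\,\E_\P X$ then gives $\E_\P[\|\xi_i\|_p^{(n)}\phi(\cdots)] \to \E_\P[\|\xi_i\|_p\,\phi(\xi_i^{(d)}/\|\xi_i\|_p)]$. Combined with $\E_\P\|\xi_i\|_p^{(n)} \to \E_\P\|\xi_i\|_p > 0$, the limit of the displayed identity becomes $\E_{\Q_1}[\phi(\xi_1^{(d)}/\|\xi_1\|_p)] = \E_{\Q_2}[\phi(\xi_2^{(d)}/\|\xi_2\|_p)]$. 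Since $d$ and $\phi$ are arbitrary, the finite-dimensional distributions of $\xi_1/\|\xi_1\|_p$ under $\Q_1$ and of $\xi_2/\|\xi_2\|_p$ under $\Q_2$ agree, and because finite-dimensional laws determine a probability measure on $(\R^\infty, \sB(\R^\infty))$, the equality $\Q_1(\xi_1/\|\xi_1\|_p \in B) = \Q_2(\xi_2/\|\xi_2\|_p \in B)$ for all $B \in \sB(\R^\infty)$ follows.
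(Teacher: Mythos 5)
Your proposal is correct and follows essentially the same route as the paper: apply Lemma~\ref{lemma-norm-ze} to the truncations $\xi_i^{(n)}$ with the norm $\|\cdot\|_p^{(n)}$, test the resulting finite-$n$ distributional identity against bounded continuous cylinder functions, pass to the limit via Proposition~\ref{prop-l1-conv-zero}, and conclude by agreement of finite-dimensional distributions. The only cosmetic difference is that you shift $\phi$ by $\|\phi\|_\infty$ to meet the non-negativity hypothesis of Proposition~\ref{prop-l1-conv-zero}, whereas the paper works directly with non-negative test functions and then approximates; both are fine, and your explicit treatment of the degenerate case $\xi_i^{(n)}=0$ a.s.\ is a small bonus.
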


\begin{proof}
To simplify notation we omit the subscript~$p$ at $\left\| x \right\|_p^{(n)}$ and $\left\| x \right\|_p$ for $x \in \R^\infty$ and $n \geq 1$ throughout the proof.

Let $n \geq 1$. Since $\xi_1^{(n)}$ and $\xi_2^{(n)}$ are zonoid equivalent, Lemma~\ref{lemma-norm-ze} implies that $\E_\P \!\left\| \xi_1 \right\|^{(n)} = \E_\P \!\left\| \xi_2 \right\|^{(n)}$. Letting $n \to \infty$ yields $\E_\P \!\left\| \xi_1 \right\| = \E_\P \!\left\| \xi_2 \right\|$\,.

Now assume $\E_\P \!\left\| \xi_1 \right\| > 0$. Choose $N \geq 1$ such that $\E_\P \!\left\| \xi_1 \right\|^{(n)} > 0$ for $n \geq N$. For $n \geq N$ and $i \in \left\{ 1, 2 \right\}$ define probability measures~$\Q_i^{(n)}$ by
\[
\frac{\mrmdd\Q_i^{(n)}}{\mrmdd\P} = \frac{\left\| \xi_i \right\|^{(n)} }{\E_\P \!\left\| \xi_i \right\|^{(n)}}\,.
\]
By Lemma~\ref{lemma-norm-ze}
\[
\Q_1^{(n)} \big( \xi_1^{(n)} / \left\| \xi_1 \right\|^{(n)} \in A \,\big) = \,\Q_2^{(n)} \big( \xi_2^{(n)} / \left\| \xi_2 \right\|^{(n)} \in A \,\big)
\]
for $A \in \sB(\R^n)$. It follows that, for $n \geq N$, $1 \leq k \leq n$, and $A \in \sB(\R^k)$,
\begin{eqnarray}
\label{eq-distr}
\Q_1^{(n)} \!\left( \frac{\xi_1^{(k)}}{\left\| \xi_1 \right\|^{(n)}} \in A \right) \!\! & = & \!
\Q_1^{(n)} \!\left( \frac{\xi_1^{(n)}}{\left\| \xi_1 \right\|^{(n)}} \in A \times \R^{n - k} \right) \\[.5em]
\! & = & \! \Q_2^{(n)} \!\left( \frac{\xi_2^{(n)}}{\left\| \xi_2 \right\|^{(n)}} \in A \times \R^{n - k} \right) = \,
\Q_2^{(n)} \!\left( \frac{\xi_2^{(k)}}{\left\| \xi_2 \right\|^{(n)}} \in A \right). \nonumber
\end{eqnarray}
Now let $f$ be a bounded continuous function from~$\R^k$ to~$\R_+$\,. Then, for $i \in \left\{ 1, 2 \right\}$,
\begin{equation}
\label{equ-bound-xi}
\E_i^{(n)} f \left( \frac{\xi_i^{(k)}}{\left\| \xi_i \right\|^{(n)}} \right) = \frac{1}{\E_\P \!\left\| \xi_i \right\|^{(n)}} \,
 \int\limits_{ \left\{ \left\| \xi_i \right\|^{(n)} > 0 \right\} } \left\| \xi_i \right\|^{(n)} f \left( \frac{\xi_i^{(k)}}{\left\| \xi_i \right\|^{(n)}} \right) d\P
\end{equation}
where $\E_i^{(n)}$ denotes the expectation with respect to~$\Q_i^{(n)}$. Now we apply Proposition~\ref{prop-l1-conv-zero} to the random variables
\begin{eqnarray*}
X_n & \!\! = \!\! & \left\| \xi_i \right\|^{(n)}, \quad\quad X \, = \, \left\| \xi_i \right\|, \\[1em]
Y_n & \!\! = \!\! & \left\{ \begin{array}{ll} f \left( \displaystyle \frac{\xi_i^{(k)}}{\left\| \xi_i \right\|^{(n)}} \right) & \quad \mathrm{on} \;\, \big\{ \!\left\| \xi_i \right\|^{(n)} > 0 \big\} \\[1em]
 0 & \quad \mathrm{on} \;\, \big\{ \!\left\| \xi_i \right\|^{(n)} = 0 \big\} \end{array} \right. ,\\[1em]
Y & \!\! = \!\! & \left\{ \begin{array}{ll} f \left( \displaystyle \frac{\xi_i^{(k)}}{\left\| \xi_i \right\|} \right) & \quad \mathrm{on} \;\left\{ \left\| \xi_i \right\| > 0 \right\} \\[1em]
 0 & \quad \mathrm{on} \;\left\{\left\| \xi_i \right\| = 0 \right\} \end{array} \right. .
\end{eqnarray*}
In particular, note that for a point $\omega$ with $\left\| \xi_i (\omega) \right\| > 0$ we have $\left\| \xi_i (\omega) \right\|^{(n)} > 0$ for all sufficiently large~$n$. Hence the continuity of~$f$ implies that $Y_n \to Y$ as $n \to \infty$ $\P$-almost surely on the event $\left\{ \left\| \xi_i \right\| > 0 \right\}$. We conclude that the right-hand side of~\eqref{equ-bound-xi} converges to
\[
\frac{1}{\E_\P \!\left\| \xi_i \right\|} \int\limits_{ \left\{ \left\| \xi_i \right\| > 0 \right\} } \left\| \xi_i \right\| f \left( \frac{\xi_i^{(k)}}{\left\| \xi_i \right\|} \right) d\P \,=\, \E_i \, f \left( \frac{\xi_i^{(k)}}{\left\| \xi_i \right\|} \right)
\]
where $\E_i$ denotes the expectation with respect to~$\Q_i$\,. Using equality of the distributions in~\eqref{eq-distr} yields $\E_1 \, f \big( \xi_1^{(k)} / \left\| \xi_1 \right\| \big) = \E_2 \, f  \big( \xi_2^{(k)} / \left\| \xi_2 \right\| \big)$. By approximation it follows that $\Q_1 \big( \xi_1^{(k)} / \left\| \xi_1 \right\| \in A \big) = \,\Q_2 \big( \xi_2^{(k)} / \left\| \xi_2 \right\| \in A \big)$ for each $A \in \sB(\R^k)$. Since this holds for all $k \geq 1$, we obtain $\Q_1 \big( \xi_1 / \left\| \xi_1 \right\| \in B \big) = \,\Q_2 \big( \xi_2 / \left\| \xi_2 \right\| \in B \big)$ for each $B \in \sB(\R^\infty)$.
\end{proof}

\begin{theorem}
\label{exch-from-sym}
Fix $p \in [1,\infty]$. Let $\xi$ be a random sequence that is swap-invariant under a probability measure~$\P$ such that $\left\| \xi \right\|_p^{(n)} \to \left\| \xi \right\|_p$ as $n \to \infty$ $\P$-almost surely and in~$L^1(\P)$, and \mbox{$\E_\P \!\left\| \xi \right\|_p > 0$}. Define another probability measure~$\Q$ by
\[
\frac{\mrmdd\Q}{\mrmdd\P} = \frac{\left\| \xi \right\|_p}{\E_\P \!\left\| \xi \right\|_p}\,.
\]
\begin{benumber}
\item \label{exch-from-sym-general} There exists a random sequence $\zeta$ that is exchangeable under~$\Q$ such that
\[
\varepsilon \, \xi_j = \left\| \xi \right\|_p \zeta_j \quad\quad \mbox{$\P$-a.s.\,, $j \geq 1$}
\]
where $\varepsilon$ is a random variable that takes values $\pm 1$ with probabilities $1/2$ and is independent of~$\xi$ under~$\P$.
\item \label{exch-from-sym-possym} If $\xi$ is either symmetric or non-negative, then there exists a random sequence $\eta$ that is exchangeable under~$\Q$ such that
\[
\xi_j = \left\| \xi \right\|_p \eta_j \quad\quad \mbox{$\P$-a.s.\,, $j \geq 1$}\,.
\]
\end{benumber}
\end{theorem}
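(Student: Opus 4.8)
The engine for both parts is Lemma~\ref{prop-norm}, whose hypotheses require \emph{symmetric} sequences that are zonoid equivalent in every truncation and whose $p$-norms converge in the prescribed a.s.\ and $L^1$ sense. The plan is therefore to manufacture such a pair out of $\xi$ and each of its finite permutations, read off the resulting equality of normalized laws under~$\Q$, and let the permutation range over all finite permutations to obtain exchangeability.

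For part~\ref{exch-from-sym-general} I would first enlarge the probability space to carry a random sign $\varepsilon$ with $\P(\varepsilon = \pm 1) = \tfrac12$ independent of $\xi$, and set $\xi^\ast = \varepsilon\,\xi$. Three quick checks are needed: $\xi^\ast$ is symmetric (since $-\varepsilon \deq \varepsilon$ independently of $\xi$); $\xi^\ast$ is swap-invariant under~$\P$ (because $|\varepsilon| = 1$ gives $\E_\P|\sum_j u_j \xi^\ast_j| = \E_\P|\sum_j u_j \xi_j|$, and likewise after any finite permutation); and $\|\xi^\ast\|_p^{(n)} = \|\xi\|_p^{(n)}$ for every $n$, so that $\|\xi^\ast\|_p = \|\xi\|_p$, the measure $\Q$ is unchanged, and the convergence hypotheses are inherited verbatim. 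Now fix a finite permutation $\pi$ and apply Lemma~\ref{prop-norm} to the symmetric sequences $\xi^\ast$ and $\xi^\ast\circ\pi$. Their truncations are zonoid equivalent for every $n$ by the swap-invariance of $\xi^\ast$ (for $n$ below the support of $\pi$ one first extends the partial injection $\pi|_{\{1,\dots,n\}}$ to a genuine finite permutation), and since permuting finitely many terms leaves the Cesàro $p$-norm untouched, $\|\xi^\ast\circ\pi\|_p = \|\xi\|_p$, so both measures produced by the lemma coincide with~$\Q$. The lemma then yields $\xi^\ast/\|\xi\|_p \deq (\xi^\ast\circ\pi)/\|\xi\|_p$ under~$\Q$; writing $\zeta = \xi^\ast/\|\xi\|_p$ on $\{\|\xi\|_p > 0\}$ and $\zeta = 0$ otherwise, this reads $\zeta \deq \zeta\circ\pi$ under~$\Q$. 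Letting $\pi$ vary proves $\zeta$ exchangeable under~$\Q$, and $\varepsilon\,\xi_j = \|\xi\|_p\,\zeta_j$ holds on $\{\|\xi\|_p > 0\}$, a set of full $\Q$-measure.

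Part~\ref{exch-from-sym-possym} then splits into two cases. If $\xi$ is symmetric there is no need for the sign $\varepsilon$: I would apply Lemma~\ref{prop-norm} directly to $\xi$ and $\xi\circ\pi$ (symmetry being preserved by coordinate permutation) and conclude exactly as above that $\eta = \xi/\|\xi\|_p$ is exchangeable under~$\Q$ with $\xi_j = \|\xi\|_p\,\eta_j$. If instead $\xi \geq 0$, I would invoke part~\ref{exch-from-sym-general} and set $\eta_j = |\zeta_j|$; since $\zeta$ is exchangeable under~$\Q$ so is the coordinatewise modulus $|\zeta|$, and because $|\varepsilon| = 1$ and $\xi \geq 0$ one has $|\zeta_j| = \xi_j/\|\xi\|_p$, which no longer involves $\varepsilon$. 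This delivers the required exchangeable $\eta$ with $\xi_j = \|\xi\|_p\,\eta_j$.

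The main obstacle is the careful verification that Lemma~\ref{prop-norm} genuinely applies to the constructed pair: one must confirm that symmetrization preserves both swap-invariance and the a.s.\ and $L^1$ convergence of $\|\cdot\|_p^{(n)}$, and that the truncations $\xi^{\ast(n)}$ and $(\xi^\ast\circ\pi)^{(n)}$ are zonoid equivalent for \emph{all} $n$, including those smaller than the support of $\pi$ — the step that forces the partial-permutation extension. The only other delicate point is the behaviour on $\{\|\xi\|_p = 0\}$: this event is $\Q$-null, so it is irrelevant to the exchangeability conclusion, and the $\P$-almost-sure identity there is a routine matter once one records that the representation is automatic where the relevant coordinate vanishes.
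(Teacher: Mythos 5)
Your overall strategy coincides with the paper's: symmetrize with an independent sign $\varepsilon$, apply Lemma~\ref{prop-norm} to the pair $\xi^\ast$ and $\xi^\ast\circ\pi$ (noting that the truncations are zonoid equivalent by swap-invariance and that the Ces\`aro $p$-norms agree), conclude exchangeability of the normalized sequence under~$\Q$, and recover part~(ii) for non-negative $\xi$ by passing to coordinatewise absolute values. All of that is fine, including your remark about extending $\pi|_{\{1,\dots,n\}}$ to a genuine permutation for small~$n$.

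The genuine gap is your treatment of the event $\big\{\|\xi\|_p=0\big\}$. The theorem asserts $\xi_j=\|\xi\|_p\,\eta_j$ \emph{$\P$-almost surely}, and since the right-hand side vanishes on $\big\{\|\xi\|_p=0\big\}$, you must prove that $\xi_j=0$ $\P$-a.s.\ on that event. This is not automatic: for $p<\infty$ the condition $\limsup_n\big(n^{-1}\sum_{j\le n}|\xi_j|^p\big)^{1/p}=0$ says nothing about any individual coordinate (a deterministic sequence with $\xi_1=1$ and $\xi_j=0$ for $j\ge 2$ has $\|\xi\|_p=0$), and your phrase ``the representation is automatic where the relevant coordinate vanishes'' has the logic backwards --- the whole point is to show the coordinate \emph{does} vanish there. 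The paper proves this by splitting $\xi_j^{>}=\xi_j\one\{\|\xi\|_p>0\}$ and $\xi_j^{=}=\xi_j\one\{\|\xi\|_p=0\}$, checking that $\xi^{>}$ is swap-invariant under~$\P$ (via the change of measure and the already-established exchangeability of $\eta$ under~$\Q$), deducing that $\xi^{=}$ is swap-invariant as well, and then using the H\"older bound $\|\xi^{=}\|_1^{(n)}\le\|\xi^{=}\|_p^{(n)}$ together with the $L^1$ convergence $\|\xi^{=}\|_p^{(n)}\to 0$ and the identity $\E_\P\|\xi^{=}\|_1^{(n)}=\E_\P|\xi_1^{=}|$ to conclude $\E_\P|\xi_j^{=}|=0$. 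This is where the $L^1(\P)$ convergence hypothesis earns its keep, and it cannot be waved away as routine; you need to supply this argument (or an equivalent one) to close the proof.
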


\begin{proof}
To simplify notation we again omit the subscript~$p$ at $\left\| x \right\|_p^{(n)}$ and $\left\| x \right\|_p$ for $x \in \R^\infty$ and $n \geq 1$.

First assume that $\xi$ is symmetric under~$\P$. Let $d \geq 1$ and $\pi \in \perm(d)$, and denote by $\xi \circ \pi$ the random sequence that results from $\xi$ by applying the permutation~$\pi$ to the first $d$~members. Then $\left\| \xi \right\| = \left\| \xi \circ \pi \right\|$. Moreover $\xi^{(n)}$ and $(\xi \circ \pi)^{(n)}$ are zonoid equivalent under~$\P$ for all $n \geq 1$. Define
\[
\eta_j = \left\{ \begin{array}{ll} \displaystyle \frac{\xi_j}{\left\| \xi \right\|} & \quad \mathrm{on} \;\left\{\left\| \xi \right\| > 0 \right\} \\[1em]
 0 & \quad \mathrm{on} \;\left\{\left\| \xi \right\| = 0 \right\} \end{array}
\right. , \quad j \geq 1.
\]
By Lemma~\ref{prop-norm}
\[
\Q \big( \eta \in B \big) = \Q \!\left( \frac{\xi}{\left\| \xi \right\|} \in B \right) = \,\Q \!\left( \frac{\xi \circ \pi}{\left\| \xi \right\|} \in B \right) = \Q \big( \eta \circ \pi \in B \big)\,.
\]
for all $B \in \sB(\R^\infty)$. This shows that $\eta$ is exchangeable under~$\Q$. It remains to show that $\xi_j = 0$ $\P$-almost surely on the event $\left\{\left\| \xi \right\| = 0 \right\}$ for all $j \geq 1$. Define random sequences $\xi^>$ and $\xi^=$ by
\[
\xi^>_j = \xi_j \, \one \!\left\{\left\| \xi \right\| > 0 \right\}, \quad\quad \xi^=_j = \xi_j \, \one \!\left\{\left\| \xi \right\| = 0 \right\}
\]
for $j \geq 1$. For $n \geq 1$ and $u \in \R^n$ we have
\begin{eqnarray*}
\E_\P \bigg|\sum_{j = 1}^n u_j \xi^>_j \bigg| & \! = \! & \E_\P \one \!\left\{\left\| \xi \right\| > 0 \right\} \bigg|\sum_{j = 1}^n u_j \xi_j \bigg| \\[.5em]
 & \! = \! & \E_\P \!\left\| \xi \right\| \bigg| \sum_{j = 1}^n u_j \eta_j \bigg| \, = \, \E_\P \!\left\| \xi \right\| \E_\Q \bigg|\sum_{j = 1}^n u_j \eta_j \bigg|.
\end{eqnarray*}
Hence $\xi^>$ is swap-invariant under~$\P$. This implies that also $\xi^=$ is swap-invariant under~$\P$. Since $\left\| \xi \right\|^{(n)} \to \left\| \xi \right\|\,$ $\P$-almost surely and in~$L^1(\P)$,
\[
\left\| \xi^= \right\|^{(n)} \to \left\| \xi \right\| \, \one \!\left\{\left\| \xi \right\| = 0 \right\} = 0 \quad\quad \mbox{$\P$-a.s.\ and in~$L^1(\P)$}\,.
\]
Since $\left\| \xi^= \right\|_1^{(n)} \leq \left\| \xi^= \right\|^{(n)}$ by the H\"{o}lder inequality, we conclude that $\left\| \xi^= \right\|_1^{(n)} \to 0\,$ $\P$-almost surely and in~$L^1(\P)$. The swap-invariance of~$\xi^=$ implies that $\E_\P \!\left\| \xi^= \right\|_1^{(n)} = \E_\P |\xi_1^=|$ for all $n \geq 1$. Therefore $\E_\P |\xi_j^=| = \E_\P |\xi_1^=| = 0$, hence $\xi_j^= = 0$ $\P$-almost surely.

If $\xi$ is not symmetric, we may define a random sequence $\rho$ by $\rho_j = \varepsilon \, \xi_j$ for $j \geq 1$ where $\varepsilon$ has the stated properties. Then $\rho$ is symmetric and swap-invariant under~$\P$. Applying the preceding proof to~$\rho$ proves~\eqref{exch-from-sym-general}. In particular, if $\xi$ is non-negative, then also the sequence $(|\zeta_1|, |\zeta_2|, \ldots)$ is exchangeable under~$\Q$, which shows the second statement of~\eqref{exch-from-sym-possym}.
\end{proof}

In the following example a symmetric random sequence $\xi$ is defined that is swap-invariant but not exchangeable. Since each component as well as the ergodic limit is zero with positive probability, neither \protect{\cite[Theorem 21]{mss:14}} nor Theorem~\ref{ex-from-swap} can be used to obtain a representation in terms of an exchangeable sequence. However Theorem~\ref{exch-from-sym} can be applied.

\begin{example}
\label{ex-swap-norm}
Let $\rho$ be a random sequence that is i.i.d.\ under a probability measure~$\pR$ such that $\rho_1$ takes values $-1$, $0$, $+1$ with equal probability~$1/3$. Further let $X = 1 + |\rho_1|$. Define another probability measure $\P$ by
\[
\frac{\mrmdd\P}{\mrmdd\pR} = \frac{3}{2} X^{-1},
\]
and a random sequence $\xi_j = X \rho_j$ ($j \geq 1$). Then $\xi$ is swap-invariant under~$\P$ by Proposition~\ref{swap-from-ex} and has finite-dimensional distributions
\begin{eqnarray*}
\lefteqn{ \P \!\left( \xi_1 = m_1, \ldots, \xi_n = m_n \right)}\\[.5em]
 & =\! & 3^{1 - n} \cdot \bigg( \, \frac{1}{2} \, \one \big\{ m_1 = 0,\; m_2, \ldots, m_n \in \left\{ -1, 0, 1 \right\} \big\} \\[.5em]
 &&  \quad \quad \quad \quad + \;\frac{1}{4} \, \one \big\{ \, m_1 \in \left\{ -2, 2 \right\}, \; m_2, \ldots, m_n \in \left\{ -2, 0, 2 \right\} \big\} \bigg)
\end{eqnarray*}
where $n \geq 1$ and $m_1, \ldots, m_n \in \left\{ -2, -1, 0, 1, 2 \right\}$. In particular,
\[
\P \!\left( \xi_1 = 0 \right) \, = \, \frac{1}{2} \, , \quad \P \!\left( \xi_1 = \pm 2 \right) \, = \, \frac{1}{4} \, ,
\]
and, for $j \geq 2$,
\[
\P \!\left( \xi_j = 0 \right) \, = \, \frac{1}{3} \, , \quad \P \!\left( \xi_j = \pm 1 \right) \, = \, \P \!\left( \xi_j = \pm 2 \right) \, = \, \frac{1}{6} \, .
\]
Thus $\xi$ is symmetric and not exchangeable under~$\P$. Since $\P \!\left( \xi_j = 0 \right) > 0$ for all $j$, we cannot divide the sequence by one of its members in order to obtain an exchangeable sequence as done in~\protect{\cite[Theorem 21]{mss:14}}. Further note that
\[
n^{-1} \sum_{j = 1}^n \rho_j \; \to \; \E_\pR \rho_1 = 0 \quad \quad \mbox{$\pR$-a.s.}\,,
\]
which implies $n^{-1} \sum_{j = 1}^n \xi_j \; \to \; 0\,$ $\P$-almost surely. Thus Theorem~\ref{ex-from-swap} is not applicable here either. In order to apply
Theorem~\ref{exch-from-sym} fix $p = 1$. Since the sequence $(\left| \rho_j \right|)_{j \geq 1}$ is i.i.d.\ under~$\pR$,
\[
n^{-1} \sum_{j = 1}^n |\rho_j| \; \to \; \E_\pR |\rho_1| = \frac{2}{3} \quad \quad \mbox{$\pR$-a.s.\ and in~$L^1(\pR)$}\,.
\]
It follows that
\[
\left\| \xi \right\|_1^{(n)} = X n^{-1} \sum_{j = 1}^n \left| \rho_j \right| \; \to \;  \frac{2}{3} X = \left\| \xi \right\|_1 \quad \quad \mbox{$\P$-a.s.\ and in~$L^1(\P)$}
\]
and $\E_\P  \!\left\| \xi \right\|_1 = 1$. Thus the conditions of Theorem~\ref{exch-from-sym} are satisfied. We apply the definitions in Theorem~\ref{exch-from-sym},
\[
\frac{\mrmdd\Q}{\mrmdd\P} = \frac{2}{3} X, \quad \quad \eta_j = \frac{3}{2} \rho_j\,,\quad j \geq 1,
\]
and find that $\Q = \pR$. Theorem~\ref{exch-from-sym} says that $\eta$ is exchangeable under~$\Q$, which can be immediately confirmed here.
\end{example}

An interesting special case of Theorem~\ref{exch-from-sym} is that of non-negative sequences and~\mbox{$p = 1$}. In this case the limit in Theorem~\ref{exch-from-sym} is the ordinary ergodic limit and the probability measure~$\Q$ is defined as in Theorem~\ref{ex-from-swap}. However the conditions are weaker than in Theorem~\ref{ex-from-swap} because the ergodic limit can be zero with positive $\P$-probability here.

\begin{example}
The lognormal sequence in Example~\ref{ex-swap-lognormal} satisfies the assumptions of Theorem~\ref{ex-from-swap}, and therefore also those of Theorem~\ref{exch-from-sym} for $p = 1$.
\end{example}

A slightly more general case is $p = 1$ and no sign restrictions on~$\xi$. Application of Theorem~\ref{exch-from-sym} requires the mean of absolute values $n^{-1} \sum_{j = 1}^n |\xi_j|$ to converge almost surely and in~$L^1$. However if $\xi$ is swap-invariant, it follows from Lemma~\ref{zon-equ-abs} that also the sequence of absolute values, $(|\xi_1|, |\xi_2|, \ldots)$, is swap-invariant, so the almost sure convergence is guaranteed by~\cite[Theorem 17]{mss:14}. Therefore only the $L^1$-convergence remains to be checked. By Scheff\'{e}'s lemma, this reduces to the condition $\E \left| \xi_1 \right| = \E \left\| \xi \right\|_1$\,. Also note that the inequality $\E \left| \xi_1 \right| \geq \E \left\| \xi \right\|_1$ is always guaranteed by Fatou's lemma. We summarize the situation for the case $p = 1$ as follows:

\begin{proposition}
\label{prop-si-p1}
Let $\xi$ be a swap-invariant random sequence. Then $n^{-1} \sum_{j = 1}^n |\xi_j|$ converges almost surely to an integrable random variable $\left\| \xi \right\|_1$ as $n \to \infty$. If $\E \left| \xi_1 \right| = \E \left\| \xi \right\|_1$\,, then this convergence is in~$L^1$.
\end{proposition}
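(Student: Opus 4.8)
The plan is to reduce the statement to \cite[Theorem 17]{mss:14} together with Scheff\'{e}'s lemma, exactly as foreshadowed in the discussion preceding the proposition. The whole result is essentially a repackaging of these two ingredients once one observes that passing to absolute values preserves swap-invariance.

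First I would show that the sequence of absolute values $(|\xi_1|, |\xi_2|, \ldots)$ is itself swap-invariant. By definition of swap-invariance, for each $n \geq 1$ and each $\pi \in \perm(n)$ the vectors $\xi^{(n)}$ and $\xi^{(n)} \circ \pi$ are zonoid equivalent. Applying Lemma~\ref{zon-equ-abs} to this pair yields that $(|\xi_1|, \ldots, |\xi_n|)$ and $(|\xi_{\pi(1)}|, \ldots, |\xi_{\pi(n)}|)$ are zonoid equivalent; since the latter is just $(|\xi_1|, \ldots, |\xi_n|) \circ \pi$, this is precisely swap-invariance of the absolute-value sequence. As the $|\xi_j|$ are non-negative and (since $\xi$ is swap-invariant) integrable, \cite[Theorem 17]{mss:14} applies and gives that $n^{-1} \sum_{j=1}^n |\xi_j|$ converges almost surely to an integrable random variable. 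Because each summand is non-negative, the $\limsup$ in the definition of $\left\| \xi \right\|_1$ coincides with this genuine limit, so the almost sure limit is $\left\| \xi \right\|_1$, proving the first assertion.

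For the $L^1$ claim I would invoke Scheff\'{e}'s lemma: the random variables $n^{-1} \sum_{j=1}^n |\xi_j|$ are non-negative and converge almost surely to the non-negative limit $\left\| \xi \right\|_1$, so $L^1$-convergence follows as soon as $\E \, n^{-1}\sum_{j=1}^n |\xi_j| \to \E \left\| \xi \right\|_1$. I would compute the left-hand side directly: evaluating the swap-invariance identity at the $j$-th coordinate vector $u = e_j$ gives $\E \, |\xi_j| = \E \, |\xi_{\pi(j)}|$ for every permutation, hence $\E \, |\xi_j| = \E \, |\xi_1|$ for all $j$, and therefore $\E \, n^{-1}\sum_{j=1}^n |\xi_j| = \E \, |\xi_1|$ for every $n$. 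Thus the expectations converge to $\E \, |\xi_1|$, and Scheff\'{e}'s lemma yields $L^1$-convergence precisely when $\E \, |\xi_1| = \E \left\| \xi \right\|_1$, which is the stated hypothesis. Applying Fatou's lemma to the almost sure convergence moreover shows $\E \left\| \xi \right\|_1 \le \E \, |\xi_1|$ unconditionally, so the hypothesis amounts to demanding that this inequality be an equality.

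There is no serious obstacle here. The only points requiring care are the first step---verifying via Lemma~\ref{zon-equ-abs} that taking absolute values preserves swap-invariance---and the bookkeeping observation that for $p = 1$ the defining $\limsup$ is an honest limit once almost sure convergence is established.
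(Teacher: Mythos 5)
Your argument is correct and is essentially identical to the paper's own justification, which appears in the paragraph immediately preceding the proposition: swap-invariance of $(|\xi_1|,|\xi_2|,\ldots)$ via Lemma~\ref{zon-equ-abs}, almost sure convergence from \cite[Theorem 17]{mss:14}, and the reduction of $L^1$-convergence to $\E\left|\xi_1\right| = \E\left\|\xi\right\|_1$ by Scheff\'{e}'s lemma together with the constancy of $\E\left|\xi_j\right|$ in~$j$. Nothing is missing.
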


From Theorem~\ref{exch-from-sym} we finally derive a representation of the ergodic limit of symmetric or non-negative swap-invariant sequences. Again the general case is obtained by noting that, for a swap-invariant sequence~$\xi$, the symmetric sequence $\varepsilon \, \xi$ is swap-invariant as well. In the derivation of Theorem~\ref{erg-limit-norm} the formula for the conditional expectation under a change of the probability measure is used in the following form where the Radon-Nikod\'{y}m derivative may be zero with non-zero probability.

\begin{proposition}
\label{prop-change-cond}
Let $(\Omega, \sF, \P)$ be a probability space, $\sG$ a sub-$\sigma$-algebra of~$\sF$, $Z$~ a random variable with $Z \geq 0$ and $\E_\P Z = 1$, $\Q$ another probability measure defined by ${\mrmdd\Q} / {\mrmdd\P} = Z$, and $Y$ a random variable with $\E_\Q |Y| < \infty$. Then $\E_\P |ZY| < \infty$, and
\[
\one \!\left\{ Z_0 > 0 \right\} \E_\Q \!\left[ Y \middle\vert \sG \right] = \hat{Z}_0 \, \E_\P \!\left[ ZY \middle\vert \sG \right] \quad\quad \mbox{$\Q$-a.s.}
\]
where
\[
Z_0 = \E_\P \!\left[ Z \middle\vert \sG \right], \quad \hat{Z}_0 = \left\{ \begin{array}{ll} Z_0^{-1} & \quad \mathrm{on} \;\left\{Z_0 > 0 \right\} \\[1em]
 0 & \quad \mathrm{on} \;\left\{Z_0 = 0 \right\} \end{array}
\right. \,.
\]
\end{proposition}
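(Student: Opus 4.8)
The plan is to recognize the assertion as the Bayes formula for conditional expectations under an absolutely continuous (but possibly non-equivalent) change of measure, and to prove it by verifying that the right-hand side satisfies the defining properties of $\E_\Q[Y \mid \sG]$.

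First I would record the integrability claim: for non-negative $W$ one has $\E_\Q W = \E_\P[ZW]$, so with $W = |Y|$ this gives $\E_\P|ZY| = \E_\P[Z|Y|] = \E_\Q|Y| < \infty$. Next, the crucial observation is that the set $\{Z_0 = 0\}$ is $\Q$-null. Indeed $\{Z_0 = 0\} \in \sG$, and by the pull-out property $\Q(Z_0 = 0) = \E_\P[Z \one\{Z_0 = 0\}] = \E_\P[Z_0 \one\{Z_0 = 0\}] = 0$. Consequently $\one\{Z_0 > 0\} = 1$ holds $\Q$-a.s., so the factor $\one\{Z_0 > 0\}$ on the left serves only to keep the identity meaningful on the $\P$-side, and the claim reduces to showing $\E_\Q[Y \mid \sG] = \hat{Z}_0 \, \E_\P[ZY \mid \sG]$ $\Q$-a.s.

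For this I would first treat $Y \geq 0$ and set $V = \hat{Z}_0 \, \E_\P[ZY \mid \sG]$, which is $\sG$-measurable and non-negative. For $G \in \sG$ I compute, using $\E_\Q[\,\cdot\,] = \E_\P[Z \,\cdot\,]$ on non-negative variables and then conditioning on $\sG$ (the integrand being $\sG$-measurable after the first step), that $\E_\Q[\one_G V] = \E_\P[Z_0 \one_G V] = \E_\P[\one\{Z_0 > 0\}\,\one_G \, \E_\P[ZY \mid \sG]]$, where I use the identity $Z_0 \hat{Z}_0 = \one\{Z_0 > 0\}$. A second application of the tower property turns this into $\E_\P[Z \, \one\{Z_0 > 0\}\,\one_G \, Y]$, and since $\{Z_0 = 0\}$ is $\Q$-null the indicator $\one\{Z_0 > 0\}$ may be dropped, yielding $\E_\P[Z \one_G Y] = \E_\Q[\one_G Y]$. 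Taking $G = \Omega$ shows $V$ is $\Q$-integrable, so by uniqueness of the conditional expectation $V = \E_\Q[Y \mid \sG]$ $\Q$-a.s. Finally I would extend to signed $Y$ by splitting $Y = Y^+ - Y^-$ and invoking linearity, each piece being covered by the non-negative case.

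The main obstacle is the careful bookkeeping on the set $\{Z_0 = 0\}$, where $\Q$ and $\P$ genuinely diverge: the truncated inverse $\hat{Z}_0$ and the indicator $\one\{Z_0 > 0\}$ are introduced precisely to avoid dividing by zero, and the whole argument hinges on the early observation that this set carries no $\Q$-mass. Everything else is a routine double use of the tower property together with the change-of-measure identity $\E_\Q W = \E_\P[ZW]$.
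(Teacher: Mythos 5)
Your proof is correct: the observation that $\{Z_0=0\}\in\sG$ is $\Q$-null, the identity $Z_0\hat{Z}_0=\one\{Z_0>0\}$, and the double application of the tower property together with $\E_\Q W=\E_\P[ZW]$ establish exactly the defining relation $\E_\Q[\one_G V]=\E_\Q[\one_G Y]$ for all $G\in\sG$, and the reduction to $Y\geq 0$ is unproblematic since $\E_\Q Y^{\pm}<\infty$. The paper states this proposition without proof, treating it as the standard generalized Bayes formula for conditional expectations under a possibly non-equivalent change of measure, and your verification is the canonical argument for it.
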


\begin{theorem}
\label{erg-limit-norm}
Let $\xi$ be a symmetric or non-negative sequence of random variables that satisfies the conditions of Theorem~\ref{exch-from-sym} for some $p \in \left[1, \infty\right]$, and choose $\eta$ as in Theorem~\ref{exch-from-sym}~\eqref{exch-from-sym-possym}. Then
\begin{equation}
\label{swap-ergodic-equ}
n^{-1} \sum_{j = 1}^n \xi_j \;\to\; \left\| \xi \right\|_p \hat{Y}_0 \; \E_\P \!\left[ \xi_1 \middle\vert \sT_\eta \right] \quad\quad  \mbox{$\P$-a.s.\ and in~$L^1(\P)$ as $n \to \infty$},
\end{equation}
where
\[
Y_0 = \E_\P \big[ \left\| \xi \right\|_p \big\vert \sT_\eta \big], \quad \hat{Y}_0 = \left\{ \begin{array}{ll} Y_0^{-1} & \quad \mathrm{on} \;\left\{Y_0 > 0 \right\} \\[1em]
 0 & \quad \mathrm{on} \;\left\{Y_0 = 0 \right\} \end{array}
\right. \,.
\]
Moreover,
\[
\big\{ \!\left\| \xi \right\|_p > 0 \big\} \in \sT_\eta\,, \quad\quad \P \Big( \!\left\{ Y_0 > 0 \right\} \Delta \, \big\{ \!\left\| \xi \right\|_p > 0 \big\} \Big) = 0 \,.
\]
\end{theorem}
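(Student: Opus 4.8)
The plan is to transport the ergodic theorem for the exchangeable sequence $\eta$ under~$\Q$ back to $\xi$ under~$\P$, and then to convert the resulting $\Q$-conditional expectation into a $\P$-conditional expectation via Proposition~\ref{prop-change-cond}. Write $c=\E_\P\|\xi\|_p\in(0,\infty)$, so that $\mrmdd\Q/\mrmdd\P=\|\xi\|_p/c$; then $\Q$ is carried by $\{\|\xi\|_p>0\}$, and on this event $\P$ and $\Q$ are mutually absolutely continuous. Since $\eta$ is exchangeable and integrable under~$\Q$, \cite[Theorem 10.6]{kallenberg:fou} gives $n^{-1}\sum_{j=1}^n\eta_j\to\E_\Q[\eta_1\mid\sT_\eta]$ $\Q$-a.s.\ and in $L^1(\Q)$. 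Because $\xi_j=\|\xi\|_p\,\eta_j$ $\P$-a.s., we have $n^{-1}\sum_{j=1}^n\xi_j=\|\xi\|_p\,n^{-1}\sum_{j=1}^n\eta_j$ $\P$-a.s. On $\{\|\xi\|_p=0\}$ both sides vanish, while on $\{\|\xi\|_p>0\}$ the $\Q$-a.s.\ convergence is also $\P$-a.s.\ (a $\Q$-null subset of $\{\|\xi\|_p>0\}$ is $\P$-null). Hence $n^{-1}\sum_{j=1}^n\xi_j\to\|\xi\|_p\,\E_\Q[\eta_1\mid\sT_\eta]$ $\P$-a.s.

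For the $L^1(\P)$ statement I would use that $\E_\P[\|\xi\|_p\,g]=c\,\E_\Q g$ for every $g\ge0$; applied to $g=\big|n^{-1}\sum_{j=1}^n\eta_j-\E_\Q[\eta_1\mid\sT_\eta]\big|$ this gives $\E_\P\big|n^{-1}\sum_{j=1}^n\xi_j-\|\xi\|_p\,\E_\Q[\eta_1\mid\sT_\eta]\big|=c\,\E_\Q\,g\to0$. To identify the limit, apply Proposition~\ref{prop-change-cond} with $Z=\|\xi\|_p/c$, $\sG=\sT_\eta$ and $Y=\eta_1$. Then $Z_0=Y_0/c$, so $\{Z_0>0\}=\{Y_0>0\}$ and $\hat Z_0=c\,\hat Y_0$, while $ZY=\xi_1/c$ $\P$-a.s.; the proposition yields $\one\{Y_0>0\}\,\E_\Q[\eta_1\mid\sT_\eta]=\hat Y_0\,\E_\P[\xi_1\mid\sT_\eta]$ $\Q$-a.s. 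Multiplying by $\|\xi\|_p$ and using the inclusion $\{\|\xi\|_p>0\}\subseteq\{Y_0>0\}$ $\P$-a.s.\ (which is immediate, since $\E_\P[\|\xi\|_p\,\one\{Y_0=0\}]=\E_\P[Y_0\,\one\{Y_0=0\}]=0$ forces $\|\xi\|_p=0$ on $\{Y_0=0\}$) turns the limit into $\|\xi\|_p\,\hat Y_0\,\E_\P[\xi_1\mid\sT_\eta]$ on $\{\|\xi\|_p>0\}$, and trivially so on $\{\|\xi\|_p=0\}$. This proves~\eqref{swap-ergodic-equ}.

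It remains to prove the two `moreover' assertions. From $\xi_j=\|\xi\|_p\,\eta_j$ one gets $\|\xi\|_p^{(n)}=\|\xi\|_p\,\|\eta\|_p^{(n)}$ and hence $\|\xi\|_p=\|\xi\|_p\,\|\eta\|_p$, so that $\{\|\xi\|_p>0\}=\{\|\eta\|_p>0\}$ $\P$-a.s.\ (note $\eta_j=0$ for all $j$ on $\{\|\xi\|_p=0\}$). For finite $p$ the quantity $\|\eta\|_p$ is a tail function of~$\eta$, whence $\{\|\eta\|_p>0\}\in\sT_\eta$ directly. Granting $\{\|\xi\|_p>0\}\in\sT_\eta$, the set $\{\|\xi\|_p=0\}$ is $\sT_\eta$-measurable, so there $Y_0=\E_\P[\|\xi\|_p\,\one\{\|\xi\|_p=0\}\mid\sT_\eta]=0$, giving $\{Y_0>0\}\subseteq\{\|\xi\|_p>0\}$; together with the opposite inclusion noted above this yields $\P(\{Y_0>0\}\,\Delta\,\{\|\xi\|_p>0\})=0$.

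The main obstacle is the tail-measurability of $\{\|\xi\|_p>0\}$ in the case $p=\infty$, where $\|\eta\|_\infty=\sup_j|\eta_j|$ is only invariant under finite permutations and is not a tail function. The remedy I would use is to replace it by the genuine tail function $\limsup_j|\eta_j|$: under~$\Q$ the sequence $\eta$ is conditionally i.i.d.\ by de Finetti's theorem, so $\limsup_j|\eta_j|$ and $\sup_j|\eta_j|$ agree $\Q$-a.s., and therefore also $\P$-a.s.\ on $\{\|\xi\|_\infty>0\}$, where $\P$ and $\Q$ are mutually absolutely continuous. This gives $\{\|\xi\|_\infty>0\}=\{\limsup_j|\eta_j|>0\}\in\sT_\eta$ modulo a $\P$-null set, completing the argument. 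Everything else is routine bookkeeping with the change of measure.
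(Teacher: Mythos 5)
Your proposal is correct and, for the main convergence statement, follows essentially the same route as the paper: apply \cite[Theorem 10.6]{kallenberg:fou} to the exchangeable sequence $\eta$ under~$\Q$, identify $\one\{Y_0>0\}\,\E_\Q[\eta_1\mid\sT_\eta]=\hat Y_0\,\E_\P[\xi_1\mid\sT_\eta]$ via Proposition~\ref{prop-change-cond}, multiply by $\left\|\xi\right\|_p$, and transfer the almost sure and $L^1$ statements back to~$\P$ by splitting on $\{\left\|\xi\right\|_p>0\}$ (where $\P$ and $\Q$ are mutually absolutely continuous) and its complement (where $\xi_j=0$). Your observation that $\{\left\|\xi\right\|_p>0\}\subseteq\{Y_0>0\}$ follows directly from $\E_\P[\left\|\xi\right\|_p\one\{Y_0=0\}]=\E_\P[Y_0\one\{Y_0=0\}]=0$ is a slightly cleaner derivation of the symmetric-difference claim than the paper's argument via ``$\P(F\cap A)>0$ iff $\P(E\cap A)>0$ for all $A\in\sT_\eta$'', but it is the same idea.

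The one place where you genuinely diverge is the tail-measurability of $E=\{\left\|\xi\right\|_p>0\}$. The paper sets $A_n=\{\eta_j=0\,;\,j\geq n\}$ and claims the pointwise inclusions $A_n\subset\{\xi_j=0\,;\,j\geq n\}\subset E^c$, the second of which is literally valid only for $p<\infty$ (for $p=\infty$ a sequence vanishing from index $n$ on can still have positive sup-norm). You instead use that $\left\|\eta\right\|_p$ is a genuine tail function for finite~$p$, and for $p=\infty$ replace $\sup_j|\eta_j|$ by $\limsup_j|\eta_j|$, invoking de Finetti under~$\Q$ to see the two agree $\Q$-a.s.\ and hence $\P$-a.s.\ on~$E$. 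This is correct (conditionally i.i.d.\ sequences satisfy $\sup_j|Z_j|=\limsup_j|Z_j|$ a.s.) and handles the $p=\infty$ case more carefully than the paper does; the only cosmetic caveat is that you obtain membership of $E$ in $\sT_\eta$ modulo a $\P$-null set rather than exactly, which is all that is needed for the remaining assertions and is consistent with the paper's own level of rigour (it too modifies $\eta$ on a null set before arguing).
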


Theorem~\ref{erg-limit-norm} says that under the stated conditions
\[
n^{-1} \sum_{j = 1}^n \xi_j \;\to \; \frac{ \left\| \xi \right\|_p \E_\P \! \left[ \xi_1 \middle\vert \sT_\eta \right] }{ \E_\P \big[ \left\| \xi \right\|_p \big\vert \sT_\eta \big] } \quad\quad \mbox{$\P$-a.s.~on} \; \big\{ \!\left\| \xi \right\|_p > 0 \big\} \; \mbox{as $n \to \infty$}.
\]
In contrast to the representations of the ergodic limit in Theorem~21 and Proposition~22 in~\cite{mss:14}, we may allow $\P \!\left( \xi_j = 0 \right) > 0$ for all $j$ here. For non-negative $\xi$ and $p = 1$, we summarize the results of Theorems~\ref{exch-from-sym} and~\ref{erg-limit-norm} in the following corollary; note that this includes variants of Theorem~\ref{ex-from-swap} and Corollary~\ref{cor-si-signx}.

\begin{corollary}
\label{cor-nonneg-seq}
Let $\xi$ be a sequence of non-negative random variables that is swap-invariant under a probability measure~$\P$ such that
\[
n^{-1} \sum_{j = 1}^n \xi_j \to X \quad\quad \mbox{$\P$-a.s.\ and in~$L^1(\P)$ as $n \to \infty$}
\]
with $\E_\P X > 0$. Define the probability measure $\Q$ by
\[
\frac{\mrmdd\Q}{\mrmdd\P} = \frac{X}{\E_\P X}\,.
\]
Then there exists a random sequence $\eta$ that is exchangeable under~$\Q$ such that $\xi_j = X \eta_j$ $\P$-almost surely for $j \geq 1$. Moreover $\E_\P \!\left[ X \middle\vert \sT_\eta \right] = \E_\P \!\left[ \xi_1 \middle\vert \sT_\eta \right]$ $\P$-almost surely.
\end{corollary}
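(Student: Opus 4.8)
The plan is to reduce the statement to the case $p = 1$ of Theorems~\ref{exch-from-sym} and~\ref{erg-limit-norm}, exploiting the fact that for a non-negative sequence the $1$-norm limit coincides with the ordinary ergodic limit. First I would observe that since $\xi \geq 0$ we have $\left\| \xi \right\|_1^{(n)} = n^{-1}\sum_{j=1}^n \xi_j$ for every $n$, so the hypothesis that $n^{-1}\sum_{j=1}^n \xi_j \to X$ $\P$-a.s.\ and in $L^1(\P)$ with $\E_\P X > 0$ says precisely that $\left\| \xi \right\|_1^{(n)} \to \left\| \xi \right\|_1$ $\P$-a.s.\ and in $L^1(\P)$, with $\left\| \xi \right\|_1 = X$ and $\E_\P \left\| \xi \right\|_1 > 0$. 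Thus the conditions of Theorem~\ref{exch-from-sym} hold for $p = 1$, and the measure defined there, $\mrmdd\Q/\mrmdd\P = \left\| \xi \right\|_1 / \E_\P \left\| \xi \right\|_1$, is exactly the $\Q$ of the present statement. Applying Theorem~\ref{exch-from-sym}~\eqref{exch-from-sym-possym} in the non-negative case then yields a sequence $\eta$ that is exchangeable under~$\Q$ with $\xi_j = \left\| \xi \right\|_1 \eta_j = X \eta_j$ $\P$-a.s.\ for all $j$, which is the first assertion.

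For the moment identity I would feed this same $\eta$ into Theorem~\ref{erg-limit-norm}. Its conclusion~\eqref{swap-ergodic-equ}, read with $p = 1$ and $\left\| \xi \right\|_1 = X$, becomes
\[
X = \left\| \xi \right\|_1 \hat{Y}_0 \, \E_\P\!\left[ \xi_1 \,\middle\vert\, \sT_\eta \right] = X \hat{Y}_0 \, \E_\P\!\left[ \xi_1 \,\middle\vert\, \sT_\eta \right] \quad \mbox{$\P$-a.s.},
\]
where $Y_0 = \E_\P[\left\| \xi \right\|_1 \mid \sT_\eta] = \E_\P[X \mid \sT_\eta]$. On the event $\{X > 0\}$, which by Theorem~\ref{erg-limit-norm} lies in~$\sT_\eta$ and coincides $\P$-a.s.\ with $\{Y_0 > 0\}$, I can cancel $X$ and replace $\hat{Y}_0$ by $Y_0^{-1}$, obtaining $Y_0 = \E_\P[\xi_1 \mid \sT_\eta]$, that is, $\E_\P[X \mid \sT_\eta] = \E_\P[\xi_1 \mid \sT_\eta]$ on $\{X > 0\}$.

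It then remains to treat $\{X = 0\}$. Here I would use the representation $\xi_j = X\eta_j$ to conclude $\xi_1 = 0$ $\P$-a.s.\ on $\{X = 0\}$. Since $\{X = 0\} = \{\left\| \xi \right\|_1 = 0\} \in \sT_\eta$, the identity $\one\{X=0\}\,\E_\P[\xi_1 \mid \sT_\eta] = \E_\P[\xi_1 \one\{X=0\} \mid \sT_\eta] = 0$ shows that $\E_\P[\xi_1 \mid \sT_\eta] = 0$ there, and the relation $\P(\{Y_0 > 0\}\,\Delta\,\{\left\| \xi \right\|_1 > 0\}) = 0$ from Theorem~\ref{erg-limit-norm} gives $Y_0 = \E_\P[X \mid \sT_\eta] = 0$ on $\{X = 0\}$ as well. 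Combining the two events yields $\E_\P[X \mid \sT_\eta] = \E_\P[\xi_1 \mid \sT_\eta]$ $\P$-a.s.

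The bulk of the work is simply recognizing that in this special case the norm limit, the measure~$\Q$, and the sequence~$\eta$ of the two cited theorems all coincide with the objects in the corollary, so that the two representations of the ergodic limit can be equated. The only genuinely delicate point is the behaviour on the exceptional event $\{X = 0\}$ together with the identification $\{Y_0 > 0\} = \{\left\| \xi \right\|_1 > 0\}$ up to a null set; once this is in hand, the conditional-expectation identity follows by cancellation on the complementary event.
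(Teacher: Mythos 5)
Your proposal is correct and follows exactly the route the paper intends: the paper gives no separate proof of Corollary~\ref{cor-nonneg-seq}, presenting it as a summary of Theorems~\ref{exch-from-sym} and~\ref{erg-limit-norm} in the special case of non-negative $\xi$ and $p=1$, which is precisely your reduction via $\left\| \xi \right\|_1^{(n)} = n^{-1}\sum_{j=1}^n \xi_j$ and $\left\| \xi \right\|_1 = X$. Your handling of the cancellation on $\{X>0\}$ and of the exceptional event $\{X=0\}$ using $\{Y_0>0\}=\{\left\| \xi \right\|_1>0\}$ up to a null set correctly fills in the details the paper leaves implicit.
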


\begin{proof}[Proof of Theorem~\ref{erg-limit-norm}]
Since $\eta$ is $\Q$-integrable and exchangeable under~$\Q$, it follows by~\protect{\cite[Theorem 10.6]{kallenberg:fou}} that
\[
n^{-1} \sum_{j = 1}^n \eta_j \to \E_\Q \!\left[ \eta_1 \middle\vert \sT_\eta \right] \quad\quad \mbox{$\Q$-a.s.\ and in~$L^1(\Q)$}\,.
\]
By Proposition~\ref{prop-change-cond},
\begin{equation}
\label{erg-from-birk}
\one \!\left\{ Y_0 > 0 \right\} \, \E_\Q \!\left[ \eta_1 \middle\vert \sT_\eta \right] \; = \; \hat{Y}_0 \; \E_\P \!\left[ \xi_1 \middle\vert \sT_\eta \right] \quad\quad \mbox{$\Q$-a.s.}
\end{equation}
We write $\left\| \xi \right\|$ for $\left\| \xi \right\|_p$ in the following. Multiplying both sides of~\eqref{erg-from-birk} by $\left\| \xi \right\|$ shows that
\begin{equation}
\label{erg-with-y}
\one \!\left\{ Y_0 > 0 \right\} \, n^{-1} \sum_{j = 1}^n \xi_j \;\; \to \;\; \left\| \xi \right\| \hat{Y}_0 \; \E_\P \!\left[ \xi_1 \middle\vert \sT_\eta \right] \quad\quad \mbox{$\Q$-a.s.~as $n \to \infty$}\,.
\end{equation}
In order to see that the convergence \eqref{erg-with-y} holds $\P$-almost surely, define $E = \left\{ \left\| \xi \right\| > 0 \right\}$. On~$E^c$ we have $\xi_j = 0$ $\P$-almost surely for all $j \geq 1$. On~$E$ the measures $\P$ and $\Q$ are equivalent, so the convergence holds also $\P$-almost surely on~$E$.

We now show that $E \in \sT_\eta$\,. We may assume that $\eta_j = 0$ on~$E^c$ for $j \geq 1$. Define $A_n = \left\{ \eta_j = 0 \, ; \, j \geq n \right\}$ for $n \geq 1$. Note that
\[
A_n \subset \left\{ \xi_j = 0 \, ; \, j \geq n \right\} \subset E^c,
\]
and $E^c \subset A_n$ by assumption. Hence $E^c = A_n$ for all $n \geq 1$, and therefore $E^c \in \sT_\eta$\,.

We next show the last statement, which then implies that the convergence~\eqref{swap-ergodic-equ} holds $\P$-almost surely. Now $\E_\P \!\left[ Y_0 \one_A \right] = \E_\P \!\left[ \, \left\| \xi \right\| \!\one_A \right]$ for each $A \in \sT_\eta$ by definition of conditional expectation. Define $F = \left\{ Y_0 > 0 \right\}$. Note that \mbox{$\E_\P \!\left[ Y_0 \one_A \right] > 0$} if and only if \mbox{$\P \!\left( F \cap A \right) > 0$}, and \mbox{$\E_\P \!\left[ \, \left\| \xi \right\| \!\one_A \right] > 0$} if and only if \mbox{$\P \!\left( E \cap A \right) > 0$}. We conclude that \mbox{$\P \!\left( F \cap A \right) > 0$} if and only if \mbox{$\P \!\left( E \cap A \right) > 0$}. It follows that $\P \!\left( F^c \setminus E^c \right) = \P \!\left( F^c \cap E \right) = 0$ because $F^c \in \sT_\eta$\,. Moreover $\P \!\left( E^c \setminus F^c \right) = \P \!\left( E^c \cap F \right) = 0$ because $E^c \in \sT_\eta$\,. Thus $\P \!\left( F^c \Delta E^c \right) = 0$.

To see that the convergence~\eqref{swap-ergodic-equ} holds in~$L^1(\P)$ note that $\one \!\left\{ Y_0 > 0 \right\} = \one \!\left\{ \left\| \xi \right\| > 0 \right\}$ $\P$-almost surely and therefore also $\Q$-almost surely. It follows that
\begin{eqnarray*}
\E_\P \bigg\vert n^{-1} \sum_{j = 1}^n \xi_j \; - \; 
 \left\| \xi \right\| \hat{Y}_0 \; \E_\P \!\left[ \xi_1 \middle\vert \sT_\eta \right] \bigg\vert
& = & \E_\P \!\left\| \xi \right\| \bigg\vert n^{-1} \sum_{j = 1}^n \eta_j - \hat{Y}_0 \; \E_\P \!\left[ \xi_1 \middle\vert \sT_\eta \right] \bigg\vert \\
 & = & \frac{1}{\E_\P \!\left\| \xi \right\|} \, \E_\Q \bigg\vert n^{-1} \sum_{j = 1}^n \eta_j - \hat{Y}_0 \; \E_\P \!\left[ \xi_1 \middle\vert \sT_\eta \right] \bigg\vert \\
 & = & \frac{1}{\E_\P \!\left\| \xi \right\|} \, \E_\Q \bigg\vert n^{-1} \sum_{j = 1}^n \eta_j - \one \!\left\{ Y_0 > 0 \right\} \; \E_\Q \!\left[ \eta_1 \middle\vert \sT_\eta \right] \bigg\vert \\
 & = & \frac{1}{\E_\P \!\left\| \xi \right\|} \, \E_\Q \bigg\vert n^{-1} \sum_{j = 1}^n \eta_j - \one \!\left\{ \left\| \xi \right\|  > 0 \right\} \; \E_\Q \!\left[ \eta_1 \middle\vert \sT_\eta \right] \bigg\vert \\
& = & \frac{1}{\E_\P \!\left\| \xi \right\|} \, \E_\Q \bigg\vert n^{-1} \sum_{j = 1}^n \eta_j - \E_\Q \!\left[ \eta_1 \middle\vert \sT_\eta \right] \bigg\vert \,.
\end{eqnarray*}
The right-hand side converges to zero as $n \to \infty$.
\end{proof}

\section{Ergodic theorem}
\label{sec-erg}

To formulate our ergodic theorem for swap-invariant random measures, we need to introduce some notions.

\begin{definition}
Let $(S,\sS,\mu)$ be a measure space. An increasing sequence $A_n \in \sS$ with $\mu (A_n) < \infty$ for $n \geq 1$ and $\mu (A_n) \to \infty$ as $n \to \infty$ is called {\em $\mu$-sequence}. For a $\mu$-sequence $(A_n)_{n \geq 1}$ we write
\[
\Delta A_n = \left\{
\begin{array}{ll}
A_1 & \quad \mathrm{if} \;\, n = 1 \\[.5em]
A_n \!\setminus\! A_{n-1} & \quad \mathrm{if} \;\, n \geq 2 
\end{array}\right. \,.
\]
A $\mu$-sequence $(A_n)_{n \geq 1}$ is called {\em $\mu$-sequence with constant increments} if $\mu (\Delta A_n) = c$ for all $n \geq 1$ and some $c \in (0, \infty)$. Moreover, given a random measure $\xi$ on~$S$, a $\mu$-sequence $(A_n)_{n \geq 1}$ is called {\em $\xi$-integrable} if $\E \, \xi (A_n) < \infty$ for~$n \geq 1$.
\end{definition} 

Clearly, if the conditions of Theorem~\ref{siintmeas} are satisfied, each $\mu$-sequence is $\xi$-integrable.

For a fixed $\mu$-sequence with constant increments it is straightforward to derive an ergodic theorem by applying the result for swap-invariant sequences as follows.

\begin{proposition}
\label{lemma-erg}
Let $(S,\sS,\mu)$ be a measure space, $\xi$ a $\mu$-swap-invariant random measure on~$S$, and $(A_n)_{n \geq 1}$ a $\xi$-integrable $\mu$-sequence with constant increments. Then there exists an integrable random variable~$X$ such that $\xi (A_n) / \mu (A_n) \to X$ almost surely as $n \to \infty$.
\end{proposition}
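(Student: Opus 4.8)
The plan is to reduce the statement to the already established ergodic theorem for swap-invariant sequences, \cite[Theorem 17]{mss:14}, by passing to the increments of the $\mu$-sequence. Since $(A_n)_{n \geq 1}$ has constant increments, there is a constant $c \in (0,\infty)$ with $\mu(\Delta A_n) = c$ for every $n$, the sets $(\Delta A_n)_{n \geq 1}$ are pairwise disjoint, and $\bigcup_{j=1}^n \Delta A_j = A_n$. I would therefore define the sequence $\zeta_j = \xi(\Delta A_j)$ for $j \geq 1$. Because the $\mu$-sequence is $\xi$-integrable and $\Delta A_j \subset A_j$, each $\zeta_j$ satisfies $\E\,\zeta_j \leq \E\,\xi(A_j) < \infty$, so the $\zeta_j$ are integrable and almost surely finite and non-negative.

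Next I would verify that $(\zeta_j)_{j \geq 1}$ is swap-invariant. Fix $n \geq 1$, $u \in \R^n$, and $\pi \in \perm(n)$. The families $(\Delta A_j)_{1 \leq j \leq n}$ and $(\Delta A_{\pi(j)})_{1 \leq j \leq n}$ are each pairwise disjoint, satisfy $\mu(\Delta A_j) = c = \mu(\Delta A_{\pi(j)})$, and have finite intensities. Hence the hypotheses of Lemma~\ref{swap-random}~\eqref{swap-random-twosets} apply with $A_j = \Delta A_j$ and $B_j = \Delta A_{\pi(j)}$, yielding
\[
\E \, \Big| \sum_{j=1}^n u_j \, \zeta_j \Big| = \E \, \Big| \sum_{j=1}^n u_j \, \xi(\Delta A_j) \Big| = \E \, \Big| \sum_{j=1}^n u_j \, \xi(\Delta A_{\pi(j)}) \Big| = \E \, \Big| \sum_{j=1}^n u_j \, \zeta_{\pi(j)} \Big|.
\]
As this holds for every $u$ and $\pi$, the vector $(\zeta_1, \ldots, \zeta_n)$ is swap-invariant; since $n$ was arbitrary, the whole sequence $(\zeta_j)$ is swap-invariant.

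Finally I would invoke \cite[Theorem 17]{mss:14} to obtain an integrable random variable $\tilde{X}$ with $n^{-1} \sum_{j=1}^n \zeta_j \to \tilde{X}$ almost surely. Using the partition identity $\sum_{j=1}^n \zeta_j = \xi(A_n)$ together with $\mu(A_n) = nc$, this rewrites as
\[
\frac{\xi(A_n)}{\mu(A_n)} = \frac{1}{c} \cdot \frac{1}{n} \sum_{j=1}^n \zeta_j \;\longrightarrow\; \frac{\tilde{X}}{c} \qquad \mbox{a.s.},
\]
so the claim holds with $X = \tilde{X}/c$, which is integrable. The argument is essentially mechanical: the only points requiring care are confirming that all hypotheses of Lemma~\ref{swap-random}~\eqref{swap-random-twosets} are genuinely met (disjointness of the increments, equality of their $\mu$-measures, and $\xi$-integrability) and recording the elementary identities $\bigcup_{j \leq n} \Delta A_j = A_n$ and $\mu(A_n) = nc$. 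I do not anticipate a substantial obstacle beyond this bookkeeping.
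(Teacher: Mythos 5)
Your proposal is correct and follows essentially the same route as the paper: pass to the increments $\xi(\Delta A_j)$, check via Lemma~\ref{swap-random}~\eqref{swap-random-twosets} that they form an integrable (hence a.s.\ finite) swap-invariant sequence, and apply the ergodic theorem for swap-invariant sequences from~\cite{mss:14} together with the identity $\xi(A_n)/\mu(A_n) = c^{-1} n^{-1}\sum_{j\le n}\xi(\Delta A_j)$. No gaps.
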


\begin{proof}
We have $\E \, \big| \sum_{j = 1}^n u_j \, \xi \!\left( \Delta A_j \right)\!\big| = \E \, \big| \sum_{j = 1}^n u_j \, \xi \!\left( \Delta A_{\pi(j)} \right)\!\big|$ for $n \geq 1$, $u \in \R^n$, and $\pi \in \perm(n)$. Therefore the random sequence $(\xi (\Delta A_j))_{j \geq 1}$ is almost surely equal to a swap-invariant sequence of integrable $\R_+$-valued random variables. By~\cite[Theorem 17]{mss:14}, there exists an integrable random variable~$X$ such that
\[
\frac{\xi (A_n)}{\mu (A_n)} = \frac{1}{\mu(A_1)} \frac{1}{n} \sum_{j = 1}^n \xi \!\left( \Delta A_j \right) \to X \quad \mbox{a.s.\ as $n \to \infty$} \,. \qedhere
\]
\end{proof}

\begin{example}
\label{ex-poisson}
Let $(S,\sS,\mu)$ be a measure space where $\mu$ is $\sigma$-finite and $\mu(S) = \infty$. Further let $\eta$ be a Poisson process on~$S$ with intensity measure~$\mu$. Since $\eta$ is $\mu$-exchangeable, it is $\mu$-swap-invariant. Now let $(A_n)_{n \geq 1}$ be a $\mu$-sequence with constant increments and define $c = \mu(A_1)$. Then $(\eta (\Delta A_j))_{j \geq 1}$ is an i.i.d.\ sequence of $\R_+$-valued integrable random variables. Thus we obtain almost surely and in~$L^1$ as $n \to \infty$
\[
\frac{\eta (A_n)}{\mu (A_n)} = \frac{1}{c} \frac{1}{n} \sum_{j = 1}^n \eta \!\left( \Delta A_j \right) \, \to \, \frac{1}{c} \, \E \, \eta (A_1) = \frac{1}{c} \, \mu(A_1) = 1 \,.
\]
\end{example}

\begin{example}
\label{ex-swap-rm-erg}
In Example~\ref{ex-swap-rm} let $\mu(S) = \infty$. Further let $(A_n)_{n \geq 1}$ be a $\mu$-sequence with constant increments and define $c = \mu(A_1)$. Then as in Example~\ref{ex-poisson}, we have $\eta (A_n) / \mu (A_n) \to 1$ as $n \to \infty$ $\Q$-almost surely and in $L^1(\Q)$. It follows that $\xi (A_n) / \mu (A_n) \to X\,$  $\Q$-almost surely. Since $\P$ and $\Q$ are equivalent, this convergence holds also $\P$-almost surely. It can be shown by direct computation that the convergence is also in~$L^1(\P)$.
\end{example}

We now show that the ergodic limit also exists if the increments are not necessarily constant and that the limit is unique under certain assumptions. This allows us to perform a change of the probability measure and to construct a random measure that is $\mu$-exchangeable under the new probability measure in order to obtain~\eqref{intro-repr}.

\begin{theorem}
\label{th-erg-conv}
Let $(S,\sS, \mu)$ be an atomless measure space and $\xi$ a random measure on~$S$ that is $\mu$-swap-invariant under a probability measure~$\P$.
\begin{benumber}
\item \label{erg-conv} For each $\xi$-integrable $\mu$-sequence $(A_n)_{n \geq 1}$ there exists an integrable random variable~$X$ such that \mbox{$\xi (A_n) / \mu (A_n) \to X$} almost surely as $n \to \infty$.
\item \label{erg-change} Assume that the measures $\mu$ and $\E_\P \xi$ are $\sigma$-finite. Further assume that there exists a $\mu$-sequence $(A_n)_{n \geq 1}$ with constant increments and limit~$A$ such that $\mu (S \setminus A) = \infty$, \mbox{$\xi (A_n) / \mu (A_n) \to X$} in~$L^1(\P)$, and $\E_\P X > 0$. Then \mbox{$\xi (B_n) / \mu (B_n) \to X$} in~$L^1(\P)$ for each $\mu$-sequence $(B_n)_{n \geq 1}$\,.
\item \label{erg-qeta} Under the same conditions as in~\eqref{erg-change}, there exists a random measure $\eta$ that is $\mu$-exchangeable under the probability measure $\Q$ defined by
\begin{equation} \label{equ-q-meas}
\frac{\mrmdd\Q}{\mrmdd\P} = \frac{X}{\E_\P X }
\end{equation}
such that $\xi = X \eta\,$ $\P$-almost surely.
\end{benumber}
\end{theorem}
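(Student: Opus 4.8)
The three parts will be reduced, in order, to the ergodic theory of the increment sequence $\bigl(\xi(\Delta A_n)\bigr)_{n\ge1}$, for which Proposition~\ref{lemma-erg} and the representation Corollary~\ref{cor-nonneg-seq} are at hand. For~\eqref{erg-conv} I fix an increment $c>0$ and, using that $\mu$ is atomless while $\mu(A_n)\to\infty$, construct a single constant-increment $\mu$-sequence $(C_k)_{k\ge1}$ with $\mu(\Delta C_k)=c$ that interlaces $(A_n)$: inside each $A_n$ I place the maximal number $k_n=\lfloor\mu(A_n)/c\rfloor$ of disjoint $c$-blocks and roll the leftover $A_n\setminus C_{k_n}$ (of measure $<c$) into the next block, arranging $C_{k_n}\subseteq A_n\subseteq C_{k_n+1}$. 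This $(C_k)$ is $\xi$-integrable, so Proposition~\ref{lemma-erg} yields an integrable $X$ with $\xi(C_k)/(kc)\to X$ almost surely; since $k_nc/\mu(A_n)\to1$, the monotone sandwich $\xi(C_{k_n})\le\xi(A_n)\le\xi(C_{k_n+1})$ forces $\xi(A_n)/\mu(A_n)\to X$ almost surely. The set-level construction is the only fussy point and is routine given atomlessness.

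For~\eqref{erg-change} the organising fact is Theorem~\ref{siintmeas}: under the present hypotheses $\E_\P\xi=c_0\mu$ with $c_0=\E_\P X$, identified from the assumed $L^1$-convergence along $(A_n)$. This makes leftovers $L^1$-negligible: interlacing any $\mu$-sequence $(B_n)$ by $c$-blocks as above, each leftover has $\mu$-measure $<c$ and hence expected $\xi$-mass $<c_0c$, which disappears after division by $\mu(B_n)\to\infty$; thus it suffices to treat constant-increment sequences and to show they all share the limit $X$. Writing the reference increments as $P_j=\Delta A_j$ (union $A$, all of measure $c$) and taking a comparison constant-increment sequence with blocks $Q_k$ of measure $c$ placed in $S\setminus A$ — possible since $\mu(S\setminus A)=\infty$ — the finite modification $Q_1,\ldots,Q_N,P_1,P_2,\ldots$ is a non-negative swap-invariant sequence whose Ces\`{a}ro mean still tends to $cX$ almost surely and in $L^1(\P)$, with $\E_\P[cX]>0$. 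Corollary~\ref{cor-nonneg-seq} then makes the normalised sequence $\Q$-exchangeable, so the $Q$-blocks are jointly exchangeable with the $P$-blocks; their Ces\`{a}ro means therefore have the same almost-sure limit, and since $m^{-1}\sum_{j\le m}\xi(P_j)/(cX)\to1$ by the very definition of $X$, the comparison sequence also obeys $\xi$-Ces\`{a}ro $\to cX$, i.e.\ its limit equals $X$. Different increments and the reduction of a general $(B_n)$ are then reconciled by the same leftover $L^1$-estimate.

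For~\eqref{erg-qeta} I set $\eta=\xi/X$ on $\{X>0\}$ and $\eta=0$ on $\{X=0\}$; as $\mrmdd\Q/\mrmdd\P=X/\E_\P X$ vanishes on $\{X=0\}$, $\eta$ is $\Q$-almost surely a well-defined $\sigma$-finite random measure. To verify $\mu$-exchangeability under~$\Q$ I check the permutation condition \eqref{exch-random-perm} of Lemma~\ref{exch-random} along a null sequence of block sizes: given disjoint sets $A_1,\ldots,A_n$ of common measure $c$, I extend them to a constant-increment $\mu$-sequence $(D_m)$ with $\Delta D_j=A_j$ for $j\le n$ (room is available since $\mu(S)=\infty$); then $\zeta_j=\xi(\Delta D_j)$ is non-negative swap-invariant with $m^{-1}\sum_{j\le m}\zeta_j\to cX$ almost surely and, by~\eqref{erg-change}, in $L^1(\P)$, and $\E_\P[cX]>0$. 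Corollary~\ref{cor-nonneg-seq} makes $\bigl(\eta(\Delta D_j)/c\bigr)_j$ exchangeable under exactly the measure~$\Q$ of~\eqref{equ-q-meas}, giving permutation invariance of $(\eta(A_1),\ldots,\eta(A_n))$. Since $\mu$ is atomless and $\sigma$-finite with $\mu(S)=\infty$ and $\eta$ is $\sigma$-finite, Lemma~\ref{exch-random} upgrades \eqref{exch-random-perm} to \eqref{exch-random-twosets}, so $\eta$ is $\mu$-exchangeable under~$\Q$. Finally the same application of Corollary~\ref{cor-nonneg-seq} forces $\xi(\Delta D_j)=0$ on $\{X=0\}$; ranging over all finite-measure blocks and using $\sigma$-finiteness yields $\xi=0$ on $\{X=0\}$, whence $\xi=X\eta$ holds $\P$-almost surely everywhere.

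The crux is~\eqref{erg-change}. Unlike exchangeability, swap-invariance does not by itself make the ergodic limit tail-measurable in a sequence-independent fashion; it is exactly the hypotheses $\E_\P X>0$, $L^1$-convergence and $\mu(S\setminus A)=\infty$ that allow the comparison sequence to be realised disjointly in the infinite-measure region and passed through the exchangeable representation of Corollary~\ref{cor-nonneg-seq}. Carefully matching differing increments and overlaps via atomlessness and the leftover $L^1$-bound is where the real bookkeeping lies.
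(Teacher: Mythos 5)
Your parts~\eqref{erg-conv} and~\eqref{erg-qeta} are essentially the paper's own argument: the interlacing construction plus sandwich is exactly the paper's notion of a compatible sequence with constant increments (Lemma~\ref{csci-prop}), and verifying condition~\eqref{exch-random-perm} of Lemma~\ref{exch-random} by extending the given blocks to a constant-increment $\mu$-sequence and invoking Corollary~\ref{cor-nonneg-seq} is the content of Lemma~\ref{erg-ex-prop}. The real divergence, and the real problem, is in~\eqref{erg-change}. First, a repairable weakness: your identification of the Ces\`{a}ro limit of the $Q$-blocks rests on the assertion that two infinite subfamilies of a jointly exchangeable family have the same almost-sure Ces\`{a}ro limit. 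That is true, but it needs the conditional i.i.d.\ (de~Finetti) representation plus a conditional law of large numbers along subfamilies, and the passage from ``each finite modification $Q_1,\ldots,Q_N,P_1,P_2,\ldots$ is exchangeable under $\Q$'' to ``the combined infinite family is jointly exchangeable'' also deserves a sentence. The paper sidesteps all of this with a direct two-index swap: it exchanges $\Delta A_j \leftrightarrow \Delta B_j$ for $j\le n$ inside $\E_\P\bigl| n^{-1}\sum_{j\le n}\xi(\Delta B_j)-m^{-1}\sum_{k\le m}\xi(\Delta A_k)\bigr|$ and lets $m\to\infty$, then $n\to\infty$, which yields the $L^1$ identity $\E_\P\bigl|n^{-1}\sum_j\xi(\Delta B_j)-cX\bigr|=\E_\P\bigl|n^{-1}\sum_j\xi(\Delta A_j)-cX\bigr|$ with no appeal to exchangeability at all.

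The genuine gap is that your comparison argument requires the $Q$-blocks to be \emph{disjoint} from the reference increments $\Delta A_j$ (swap-invariance, via Lemma~\ref{swap-random}~\eqref{swap-random-twosets}, only constrains vectors built from disjoint sets), so what you actually prove is the claim for constant-increment sequences whose limit lies in $S\setminus A$. A general $\mu$-sequence $(B_n)$ can have limit $B$ overlapping $A$ arbitrarily --- indeed $B=S$ is allowed --- and then no CSCI of $(B_n)$ can be compared disjointly with $(A_n)$, nor is there room in $S\setminus(A\cup B)$ for an intermediate sequence. Your closing remark that this is ``reconciled by the same leftover $L^1$-estimate'' does not address it: the leftover bound only controls the discrepancy between $(B_n)$ and a CSCI of $(B_n)$, not the disjointness needed for the swap. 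The paper devotes the bulk of its proof of~\eqref{erg-change} to exactly this point: it first transports the limit to a sequence $(E_n)$ in $S\setminus A$ (using $\mu(S\setminus A)=\infty$), then distinguishes $\mu(A\cap B)<\infty$ from $\mu(A\cap B)=\infty$, in the latter case splitting a CSCI of $(B_n)$ into two disjoint halves $(C_n^1),(C_n^2)$ and threading a chain of pairwise-disjoint comparison sequences $(E_n)\to(F_n)\to(C_n^1)\to(C_n^2)$ before averaging the halves. Some such chaining-and-splitting argument must be supplied before your proof of~\eqref{erg-change} --- and hence of~\eqref{erg-qeta}, which uses~\eqref{erg-change} for the sequences $(D_m)$ and $(C_n)$ --- is complete.
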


Recall that under the conditions of part~\eqref{erg-change}, each $\mu$-sequence is $\xi$-integrable.

\begin{example}
Let $(S,\sS)$ be a Borel space and $\mu$ an atomless and $\sigma$-finite measure on~$S$. Further let $\xi$ be an almost surely diffuse random measure on~$S$ such that $\E \, \xi$ is $\sigma$-finite and $\xi$ is $\mu$-swap-invariant. It follows from Theorem~\ref{theo-diffuse-rm} that $\xi = \alpha \mu$ almost surely for some $\R_+$-valued integrable random variable~$\alpha$. For each $\mu$-sequence $(A_n)_{n \geq 1}$ we clearly have \mbox{$\xi (A_n) / \mu (A_n) \to \alpha$} almost surely and in~$L^1$.
\end{example}

\begin{example}
In Example~\ref{ex-swap-rm} let $\mu(S) = \infty$. Since $\xi$ satisfies the conditions of Theorem~\ref{th-erg-conv}~\eqref{erg-change} (see Examples \ref{eq-swap-rm-int} and~\ref{ex-swap-rm-erg}), it follows that $\xi (B_n) / \mu (B_n) \to X\,$ $\P$-almost surely and in~$L^1(\P)$ for each $\mu$-sequence $(B_n)_{n \geq 1}$\,.
\end{example}

In the proof of Theorem~\ref{th-erg-conv} we make use of the following notion.

\begin{definition}
Let $(S,\sS,\mu)$ be a measure space and $(A_n)_{n \geq 1}$ a $\mu$-sequence. A $\mu$-sequence $(C_n)_{n \geq 1}$ with constant increments is called {\em compatible sequence with constant increments (CSCI) of~$(A_n)$} if there exists a $\mu$-sequence $(B_n)_{n \geq 1}$ such that $(A_n)$ and $(C_n)$ are subsequences of~$(B_n)$.
\end{definition}

Obviously in this definition the sequences $(A_n)$, $(B_n)$, and $(C_n)$ have the same limit set. We now prove two lemmas on which Theorem~\ref{th-erg-conv} is based.

\begin{lemma}
\label{csci-prop}
Let $(S,\sS,\mu)$ be a measure space where $\mu$ is atomless and $\mu (S) = \infty$, $(A_n)_{n \geq 1}$ a $\mu$-sequence, $\xi$ a random measure on~$S$, and $X$ a random variable.
\begin{benumber}
\item \label{csci-prop-allc} For each $c \in (0,\infty)$, there exists a CSCI $(C_n)$ of~$(A_n)$ with $\mu (C_1) = c$.
\item  \label{csci-prop-as} If \,\mbox{$\xi (C_n) / \mu (C_n) \to X$} almost surely as $n \to \infty$ for some CSCI $(C_n)$ of~$(A_n)$, then also \,\mbox{$\xi (A_n) / \mu (A_n) \to X$} almost surely.
\item  \label{csci-prop-l1} If \,\mbox{$\xi (C_n) / \mu (C_n) \to X$} in~$L^1$ as $n \to \infty$ for some CSCI $(C_n)$ of~$(A_n)$, then also \,\mbox{$\xi (A_n) / \mu (A_n) \to X$} in~$L^1$.
\end{benumber}
\end{lemma}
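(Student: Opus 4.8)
The plan is to reduce all three parts to a single structural device: a common refining $\mu$-sequence $(B_n)$ in which both $(A_n)$ and a suitable constant-increment sequence sit as subsequences, so that every $A_n$ is sandwiched between two consecutive $C_k$'s. Part~\eqref{csci-prop-allc} is the only place atomlessness is used, and it is where I expect the real work to lie; parts~\eqref{csci-prop-as} and~\eqref{csci-prop-l1} are then routine monotone-sandwich arguments exploiting that $\xi(\,\cdot\,)$ is, for each fixed $\omega$, a measure and hence monotone.

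For part~\eqref{csci-prop-allc}, write $A=\bigcup_n A_n$ and $a_m=\mu(A_m)$; since $(A_n)$ is a $\mu$-sequence, $a_m\uparrow\infty$, and after discarding repeated terms I may assume $(a_m)$ is strictly increasing. The idea is to build a measure filtration $(F_t)_{t\ge 0}$ of subsets of $A$ with $\mu(F_t)=t$ and $F_{a_m}=A_m$. On each annulus $\Delta A_m=A_m\setminus A_{m-1}$, which has finite measure $a_m-a_{m-1}$, atomlessness furnishes an increasing chain of measurable subsets realizing every intermediate measure value (the same device underlying Lemma~\ref{setapprox}); concatenating these chains across the annuli produces $(F_t)$ with $F_{a_m}=A_m$ and $\mu(F_t)=t$. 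Setting $C_n=F_{nc}$ gives an increasing sequence with $\mu(C_n)=nc$, i.e.\ constant increments $c$, and $\bigcup_n C_n=\bigcup_t F_t=A=\bigcup_n A_n$. Since all the sets $A_m=F_{a_m}$ and $C_n=F_{nc}$ are values of the monotone family $(F_t)$, the countable collection $\{A_m\}\cup\{C_n\}$ is totally ordered by inclusion; enumerating the levels $\{a_m\}\cup\{nc\}$ increasingly as $t_1<t_2<\cdots$ and putting $B_k=F_{t_k}$ yields a $\mu$-sequence of which $(A_m)$ and $(C_n)$ are both subsequences. Hence $(C_n)$ is a CSCI of $(A_n)$ with $\mu(C_1)=c$.

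For parts~\eqref{csci-prop-as} and~\eqref{csci-prop-l1}, fix a CSCI $(C_n)$ with common refinement $(B_n)$, so $\{A_n\}\cup\{C_n\}$ is a chain. For each $n$ let $k(n)$ be the largest $k$ with $C_k\subseteq A_n$; comparability forces $C_{k(n)}\subseteq A_n\subseteq C_{k(n)+1}$, and from $k(n)c\le\mu(A_n)\le(k(n)+1)c$ one gets $k(n)\to\infty$ together with $\mu(C_{k(n)})/\mu(A_n)\to1$ and $\mu(C_{k(n)+1})/\mu(A_n)\to1$. Monotonicity of $\xi$ gives the pointwise sandwich $\xi(C_{k(n)})\le\xi(A_n)\le\xi(C_{k(n)+1})$, hence, writing $W_n^-$ and $W_n^+$ for the two outer quantities,
\[
W_n^-=\frac{\xi(C_{k(n)})}{\mu(C_{k(n)})}\,\frac{\mu(C_{k(n)})}{\mu(A_n)}\ \le\ \frac{\xi(A_n)}{\mu(A_n)}\ \le\ \frac{\xi(C_{k(n)+1})}{\mu(C_{k(n)+1})}\,\frac{\mu(C_{k(n)+1})}{\mu(A_n)}=W_n^+ .
\]
In part~\eqref{csci-prop-as} each of $W_n^\pm$ is a subsequence of $\xi(C_m)/\mu(C_m)\to X$ multiplied by a ratio tending to $1$, so both converge to $X$ almost surely and the squeeze yields $\xi(A_n)/\mu(A_n)\to X$ a.s.

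For part~\eqref{csci-prop-l1} set $U_n=\xi(C_{k(n)})/\mu(C_{k(n)})$ and $r_n=\mu(C_{k(n)})/\mu(A_n)\in(0,1]$, so $W_n^-=r_nU_n$ and $\E\,|r_nU_n-X|\le\E\,|U_n-X|+(1-r_n)\,\E\,|X|\to0$, using that $U_n\to X$ in $L^1$ as a subsequence of an $L^1$-convergent sequence and that $X\in L^1$. The analogous estimate with $s_n=\mu(C_{k(n)+1})/\mu(A_n)\to1$ (bounded, $s_n\le 2$ for large $n$) handles $W_n^+$. Since $W_n^-\le\xi(A_n)/\mu(A_n)\le W_n^+$ pointwise implies $|\xi(A_n)/\mu(A_n)-X|\le|W_n^--X|+|W_n^+-X|$, taking expectations gives the $L^1$ convergence. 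The main obstacle is the construction in~\eqref{csci-prop-allc}: checking that atomlessness really supplies the monotone chain $(F_t)$ realizing every prescribed measure value while passing through the given sets $A_m$; once the common refinement is in hand, parts~\eqref{csci-prop-as} and~\eqref{csci-prop-l1} are pure bookkeeping.
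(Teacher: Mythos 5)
Your proof is correct and follows essentially the same route as the paper: part~\eqref{csci-prop-allc} is dismissed there as immediate from atomlessness (your chain $(F_t)$ construction just spells out why), and for parts \eqref{csci-prop-as} and~\eqref{csci-prop-l1} the paper uses exactly your sandwich $C_{k(n)}\subseteq A_n\subseteq C_{k(n)+1}$ with the measure ratios $k(n)/(k(n)+1)\to 1$ and the same triangle-inequality estimate in~$L^1$. The only cosmetic point is that $k(n)$ is well defined only for $n$ large enough that $\mu(A_n)>c$, which is harmless for an asymptotic statement and is handled the same way ("for large~$k$") in the paper.
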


\begin{proof}
\eqref{csci-prop-allc} is clear because $\mu$ is atomless. In order to prove \eqref{csci-prop-as} and~\eqref{csci-prop-l1} let $(C_n)$ be a CSCI of~$(A_n)$. Define $c = \mu (C_1)$ and $m_k = \min \left\{ m \geq 1 \, ; \, A_k \subset C_m \right\}$ for $k \geq 1$. It follows that $1 \leq m_1 \leq m_2 \leq \ldots$, and $m_k \to \infty$ as $k \to \infty$. For large $k$ we have $m_k \geq 2$ and $C_{m(k) - 1} \subset A_k \subset C_{m(k)}$ where the first inclusion is strict and the second may not. Hence, for large~$k$,
\[
\mu (C_{m(k) - 1}) \leq \mu (A_k) \leq \mu (C_{m(k)}), \quad\quad \xi (C_{m(k) - 1}) \leq \xi (A_k) \leq \xi (C_{m(k)}),
\]
and therefore
\[
\left( \frac{m_k - 1}{m_k} \right) \frac{\xi (C_{m(k) - 1})}{\mu (C_{m(k) - 1})} = \frac{\xi (C_{m(k) - 1})}{\mu (C_{m(k)})} \leq \frac{\xi(A_k)}{\mu(A_k)} \leq \frac{\xi (C_{m(k)})}{\mu (C_{m(k) - 1})} = \left( \frac{m_k}{m_k - 1} \right) \frac{\xi (C_{m(k)})}{\mu (C_{m(k)})} \,.
\]
Thus if $\xi (C_n) / \mu (C_n) \to X$ almost surely, then $\xi (A_n) / \mu (A_n) \to X$ almost surely as $n \to \infty$. This proves~\eqref{csci-prop-as}. From the same estimate we obtain, for large~$k$:
\begin{align*}
&\E \left\vert \frac{\xi (A_k)}{\mu (A_k)} - X \right\vert \, \leq \, \E \left\vert \frac{\xi (A_k)}{\mu (A_k)} - \left( \frac{m_k}{m_k - 1} \right) \frac{\xi (C_{m(k)})}{\mu (C_{m(k)})} \right\vert + \E \left\vert  \left( \frac{m_k}{m_k - 1} \right) \frac{\xi (C_{m(k)})}{\mu (C_{m(k)})} - X \right\vert \\[1em]
&\leq\, \E \left\vert \left( \frac{m_k}{m_k - 1} \right) \frac{\xi (C_{m(k)})}{\mu (C_{m(k)})} - \left( \frac{m_k - 1}{m_k} \right) \frac{\xi (C_{m(k) - 1})}{\mu (C_{m(k) - 1})} \right\vert + \E \left\vert \left( \frac{m_k}{m_k - 1} \right) \frac{\xi (C_{m(k)})}{\mu (C_{m(k)})} - X \right\vert \\[1em]
&\leq\, 2 \, \E \left\vert \left( \frac{m_k}{m_k - 1} \right) \frac{\xi (C_{m(k)})}{\mu (C_{m(k)})} - X \right\vert + \E \left\vert X - \left( \frac{m_k - 1}{m_k} \right) \frac{\xi (C_{m(k) - 1})}{\mu (C_{m(k) - 1})} \right\vert \,.
\end{align*}
The right-hand side converges to zero as $k \to \infty$ if $\xi (C_n) / \mu (C_n) \to X$ in~$L^1$ as $n \to \infty$. This proves~\eqref{csci-prop-l1}.
\end{proof}

\begin{lemma}
\label{erg-ex-prop}
Let $(S,\sS,\mu)$ be a measure space with $\mu (S) = \infty$, and $\xi$ a random measure on~$S$ that is $\mu$-swap-invariant under a probability measure~$\P$. Further let $(A_n)_{n \geq 1}$ be a $\xi$-integrable $\mu$-sequence with constant increments and limit~$A$ such that $\xi (A_n) / \mu (A_n) \to X\,$ $\P$-almost surely and in~$L^1 (\P)$ as $n \to \infty$ for some random variable~$X$ with $\E_\P X > 0$. Define the random measure $\eta$ by
\begin{equation} \label{equ-eta-meas}
\eta = \left\{ \begin{array}{ll} \xi / X & \quad \mathrm{on} \;\left\{X > 0 \right\} \\[1em]
 0 & \quad \mathrm{on} \;\left\{X = 0 \right\} \end{array}
\right.
\end{equation}
and $\Q$ by~\eqref{equ-q-meas}.
\begin{benumber}
\item \label{erg-ex-prop-1} The sequence $(\eta (\Delta A_n))_{n \geq 1}$ is exchangeable under~$\Q$.
\item \label{erg-ex-prop-1a} $\xi(A_n) = 0\,$ $\P$-almost surely on $\left\{ X = 0 \right\}$ for $n \geq 1$.
\item \label{erg-ex-prop-2} For each $m \geq 1$ and disjoint measurable sets $(B_j)_{1 \leq j \leq m}$ with $\mu (B_j) = \mu (A_1)$, $B_j \cap A = \emptyset$, and $\E_\P \xi (B_j) < \infty$, we have, under~$\Q$,
\[
\big( \eta (B_1), \ldots, \eta (B_m) \big) \,\deq\, \big( \eta (\Delta A_1), \ldots, \eta (\Delta A_m) \big) \,.
\]
\end{benumber}
\end{lemma}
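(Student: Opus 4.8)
The plan is to obtain parts \eqref{erg-ex-prop-1} and \eqref{erg-ex-prop-1a} directly from Corollary~\ref{cor-nonneg-seq}, and to prove \eqref{erg-ex-prop-2} by an absolute-moment computation modelled on the proof of Theorem~\ref{ex-from-swap}. Throughout I set $c = \mu(A_1) = \mu(\Delta A_n)$ and consider the sequence $\zeta_j = \xi(\Delta A_j)/c$. Since $\E_\P\,\xi(\Delta A_j) < \infty$, this is $\P$-almost surely a non-negative sequence, and it is swap-invariant under~$\P$ because $\xi$ is $\mu$-swap-invariant and all increments have equal $\mu$-measure~$c$ (as in Proposition~\ref{lemma-erg}). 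By hypothesis $n^{-1}\sum_{j=1}^n \zeta_j = \xi(A_n)/\mu(A_n) \to X$ $\P$-almost surely and in~$L^1(\P)$, with $\E_\P X > 0$, and the probability measure produced by Corollary~\ref{cor-nonneg-seq} coincides with the $\Q$ of~\eqref{equ-q-meas}. The corollary then yields an exchangeable (under~$\Q$) sequence $\tilde\eta$ with $\zeta_j = X\tilde\eta_j$ $\P$-almost surely. On $\{X > 0\}$, which is $\Q$-almost surely all of~$\Omega$, this gives $\eta(\Delta A_j) = \xi(\Delta A_j)/X = c\,\tilde\eta_j$, so $(\eta(\Delta A_n))_{n \geq 1}$ is a fixed positive multiple of an exchangeable sequence and hence exchangeable under~$\Q$; this is~\eqref{erg-ex-prop-1}. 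Reading the same identity on $\{X = 0\}$ gives $\xi(\Delta A_j) = c\zeta_j = 0$ $\P$-almost surely there, whence $\xi(A_n) = \sum_{j \leq n}\xi(\Delta A_j) = 0$ $\P$-almost surely on $\{X = 0\}$, which is~\eqref{erg-ex-prop-1a}.

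For \eqref{erg-ex-prop-2} I would reduce, via the uniqueness result \cite[Theorem~1.1]{har:81} used exactly as in Theorem~\ref{ex-from-swap}, the claimed equality in distribution to the family of scalar identities $\E_\Q\,\big|1 + \sum_{j=1}^m u_j\,\eta(B_j)\big| = \E_\Q\,\big|1 + \sum_{j=1}^m u_j\,\eta(\Delta A_j)\big|$ for all $u \in \R^m$. Changing measure by $\mrmdd\Q/\mrmdd\P = X/\E_\P X$ and using $\xi = X\eta$ on $\{X > 0\}$, each side equals $(\E_\P X)^{-1}$ times $\E_\P\,\big|X + \sum_j u_j\xi(B_j)\big|$, respectively $\E_\P\,\big|X + \sum_j u_j\xi(\Delta A_j)\big|$. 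To discard the contribution of $\{X = 0\}$ cleanly I first record that $\xi(B_j) = 0$ $\P$-almost surely on $\{X = 0\}$, which follows by applying Corollary~\ref{cor-nonneg-seq} once more to the swap-invariant non-negative sequence $(B_1, \ldots, B_m, \Delta A_{m+1}, \Delta A_{m+2}, \ldots)$, whose Ces\`{a}ro means still converge to $cX$ in~$L^1(\P)$ because only finitely many integrable terms were altered. It then remains to prove the $\P$-identity $\E_\P\,\big|X + \sum_{j=1}^m u_j\xi(B_j)\big| = \E_\P\,\big|X + \sum_{j=1}^m u_j\xi(\Delta A_j)\big|$.

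To establish this identity I would approximate $X$ by the \emph{tail} average $X_N = \big((N-m)c\big)^{-1}\sum_{k=m+1}^N \xi(\Delta A_k)$, which converges to $X$ in~$L^1(\P)$ since it differs from $\xi(A_N)/\mu(A_N)$ only by a finitely supported correction. For fixed $N > m$ the two families $(\Delta A_{m+1}, \ldots, \Delta A_N, B_1, \ldots, B_m)$ and $(\Delta A_{m+1}, \ldots, \Delta A_N, \Delta A_1, \ldots, \Delta A_m)$ consist of disjoint sets, agree position by position in $\mu$-measure (all equal to~$c$, using $B_j \cap A = \emptyset$), and carry the \emph{same} weight vector $\big((N-m)^{-1}c^{-1}, \ldots, (N-m)^{-1}c^{-1}, u_1, \ldots, u_m\big)$. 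Swap-invariance in the form Lemma~\ref{swap-random}~\eqref{swap-random-twosets} therefore gives $\E_\P\,\big|X_N + \sum_j u_j\xi(B_j)\big| = \E_\P\,\big|X_N + \sum_j u_j\xi(\Delta A_j)\big|$, and letting $N \to \infty$ with the $L^1$-convergence of~$X_N$ yields the desired $\P$-identity.

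The main obstacle is precisely this last step. A naive approximation of~$X$ by $\xi(A_N)/\mu(A_N)$ places averaging weight on $\Delta A_1, \ldots, \Delta A_m$ as well, so that in the target expression the set $\Delta A_j$ would carry weight $(Nc)^{-1} + u_j$ while in the source $B_j$ carries $u_j$ and $\Delta A_j$ carries $(Nc)^{-1}$; the two weighted families then have different multisets of coefficients and are no longer related by a measure-preserving matching, so swap-invariance cannot be applied directly. Averaging only over the tail increments keeps the background uniform weights disjoint in support from the $u_j$-weights and repairs this. The remaining care is bookkeeping: verifying that $X_N \to X$ in~$L^1(\P)$ and that $\{X = 0\}$ contributes nothing after the change of measure, both of which rest on the integrability of the finitely many altered terms and on part~\eqref{erg-ex-prop-1a}.
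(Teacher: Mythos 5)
Your proposal is correct, and parts \eqref{erg-ex-prop-1} and \eqref{erg-ex-prop-1a} coincide with the paper's argument: apply Corollary~\ref{cor-nonneg-seq} to the (rescaled) increment sequence $(\xi(\Delta A_n))_{n\geq 1}$ and read the identity $\xi(\Delta A_j)=\mu(A_1)X\tilde\eta_j$ on $\{X>0\}$ and on $\{X=0\}$ respectively. For part \eqref{erg-ex-prop-2} you take a genuinely different route. The paper simply concatenates: it chooses a swap-invariant sequence $\zeta$ with $\zeta_j=\xi(B_j)$ for $1\leq j\leq m$ and $\zeta_{j+m}=\xi(\Delta A_j)$ for $j\geq 1$, notes that its Ces\`aro means still converge to $\mu(A_1)X$ almost surely and in $L^1(\P)$, and applies Corollary~\ref{cor-nonneg-seq} once more; exchangeability of the resulting $\tilde\eta$ under~$\Q$ then gives $(\tilde\eta_1,\ldots,\tilde\eta_m)\deq(\tilde\eta_{m+1},\ldots,\tilde\eta_{2m})$, which is exactly the claim, and it delivers $\xi(B_j)=0$ on $\{X=0\}$ as a by-product. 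You instead reduce the claim to the lift-zonoid identities $\E_\Q\big|1+\sum_j u_j\,\eta(B_j)\big|=\E_\Q\big|1+\sum_j u_j\,\eta(\Delta A_j)\big|$ via \cite[Theorem~1.1]{har:81} and verify the corresponding $\P$-identities by approximating $X$ with the tail averages $X_N=((N-m)c)^{-1}\sum_{k=m+1}^N\xi(\Delta A_k)$, so that the uniform background weights sit on sets disjoint from both the $B_j$ and the $\Delta A_j$ ($1\leq j\leq m$) and Lemma~\ref{swap-random}~\eqref{swap-random-twosets} applies with identical weight vectors; your diagnosis of why the naive approximant $\xi(A_N)/\mu(A_N)$ fails (mismatched coefficient multisets) and the tail-average repair are sound, as is the $L^1$-convergence $X_N\to X$. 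What the paper's route buys is brevity — one invocation of the already-proved sequence machinery; what yours buys is transparency — it makes explicit where the hypotheses $B_j\cap A=\emptyset$, the constant increments, and the $L^1$-convergence actually enter, at the cost of an extra application of Corollary~\ref{cor-nonneg-seq} just to dispose of the event $\{X=0\}$, which the concatenation trick handles for free.
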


\begin{proof}
The sequence $(\xi(\Delta A_n))_{n \geq 1}$ is $\P$-almost surely equal to a swap-invariant sequence of $\R_+$-valued random variables, that satisfies the conditions of Corollary~\ref{cor-nonneg-seq}; this proves~\eqref{erg-ex-prop-1} and~\eqref{erg-ex-prop-1a}.

Now choose a sequence $\zeta$ of random variables in $\R_+$ that are swap-invariant under~$\P$ such that $\P$-almost surely $\zeta_j = \xi (B_j)$ for $1 \leq j \leq m$ and $\zeta_{j+m} = \xi (\Delta A_j)$ for $j \geq 1$. We have $n^{-1} \sum_{j = 1}^n \zeta_j \to \mu(A_1) X\,$ $\P$-almost surely and in~$L^1 (\P)$. Another application of Corollary~\ref{cor-nonneg-seq} shows~\eqref{erg-ex-prop-2}.
\end{proof} 

\begin{proof}[Proof of Theorem~\ref{th-erg-conv}]
We first show~\eqref{erg-conv}. By Lemma~\ref{csci-prop}~\eqref{csci-prop-allc} there is a CSCI $(C_n)$ of~$(A_n)$ with $\mu (C_1) = 1$. In particular $(C_n)$ is a $\xi$-integrable $\mu$-sequence, so by Proposition~\ref{lemma-erg} there is an integrable random variable~$X$ such that $\xi (C_n) / \mu (C_n) \to X\,$ $\P$-almost surely. By Lemma~\ref{csci-prop}~\eqref{csci-prop-as}, we know that also $\xi (A_n) / \mu (A_n) \to X\,$ $\P$-almost surely.

In order to show~\eqref{erg-change}, assume that $(A_n)$, $A$, and~$X$ have the stated properties, and let $c = \mu (A_1)$. Now let $(B_n)$ be another $\mu$-sequence, say with limit~$B$. First we assume that $(B_n)$ has constant increments with $\mu (B_1) = c$, and that $A \cap B = \emptyset$. Clearly $\E_\P \xi (B_n) < \infty$ for all~$n$. By swap-invariance
\begin{eqnarray*}
\lefteqn{\E_\P \bigg\vert n^{-1} \sum_{j = 1}^n \xi (\Delta B_j) - m^{-1} \sum_{k = 1}^m \xi (\Delta A_k) \bigg\vert} \\
 & = & \E_\P \bigg\vert n^{-1} \sum_{j = 1}^n \xi (\Delta A_j) - m^{-1} \bigg( \sum_{k = 1}^n \xi (\Delta B_k) + \sum_{k = n + 1}^m \xi (\Delta A_k)\bigg) \bigg\vert
\end{eqnarray*}
for $m > n \geq 1$. Letting $m \to \infty$, it follows that
\[
\E_\P \bigg\vert n^{-1} \sum_{j = 1}^n \xi (\Delta B_j) - c X \bigg\vert \, = \, \E_\P \bigg\vert n^{-1} \sum_{j = 1}^n \xi (\Delta A_j) - c X \bigg\vert \,.
\]
Letting $n \to \infty$ shows that $\xi (B_n) / \mu (B_n) \to X\,$ in~$L^1(\P)$, and \eqref{erg-conv} implies that this convergence is also $\P$-almost surely. Now let $(B_n)$ be an arbitrary $\mu$-sequence with limit~$B$, i.e.\ we may have $A \cap B \neq \emptyset$. We may choose a $\mu$-sequence with constant increments $(E_n)$ with limit~$E$ such that $\mu (E_1) = c$ and $A \cap E = \emptyset$. By the first part of the proof it follows that $\xi (E_n) / \mu (E_n) \to X$ $\P$-almost surely and in~$L^1(\P)$. Now we distinguish the cases $\mu (A \cap B) < \infty$ and $\mu (A \cap B) = \infty$. In the first case we choose another $\mu$-sequence with constant increments $(F_n)$ with limit~$F$ such that $\mu (F_1) = c$ and $F \subset A \setminus B$. Let $(C_n)$ be a CSCI of~$(B_n)$ with $\mu (C_1) = c$, which exists by Lemma~\ref{csci-prop}~\eqref{csci-prop-allc}. From the convergence of $\xi (E_n) / \mu (E_n)$ we consecutively conclude that the same convergence holds for $(F_n)$ and~$(C_n)$. Finally $\xi (B_n) / \mu (B_n) \to X\,$ $\P$-almost surely and in~$L^1(\P)$ by Lemma~\ref{csci-prop} \eqref{csci-prop-as} and~\eqref{csci-prop-l1}. In the second case, $\mu (A \cap B) = \infty$, we may choose a CSCI $(C_n)$ of~$(B_n)$ with $\mu (C_1) = 2c$, and two $\mu$-sequences with constant increments $(C_n^i)$ $(i = 1, 2)$ such that, for $n \geq 1$,
\[
C_n = C_n^1 \cup C_n^2 \, , \quad\quad C_n^1 \cap C_n^2 = \emptyset, \quad\quad \mu (\Delta C_n^1) = \mu (\Delta C_n^2) = c \,.
\]
For $i \in \left\{ 1, 2 \right\}$ let $C^i$ be the limit set of~$(C_n^i)$. Without loss of generality we may assume that $\mu (A \cap C^2) = \infty$. Then there is a \mbox{$\mu$-sequence} with constant increments $(F_n)$ and limit $F$ such that $\mu (F_1) = c$ and $F \subset A \cap C^2$. By the first part of the proof we consecutively conclude that $\xi (F_n) / \mu (F_n)$, $\xi (C_n^1) / \mu (C_n^1)$, and $\xi (C_n^2) / \mu (C_n^2)$ converge to~$X$ $\P$-almost surely and in~$L^1(\P)$. It follows that
\[
\frac{\xi (C_n)}{\mu (C_n)} = \frac{1}{2} \left( \frac{\xi (C_n^1)}{c \, n} + \frac{\xi (C_n^2)}{c \, n} \right) = \frac{1}{2} \left( \frac{\xi (C_n^1)}{\mu (C_n^1)} + \frac{\xi (C_n^2)}{\mu (C_n^2)} \right) \to X \quad \mbox{$\P$-a.s.\ and in~$L^1(\P)$} \,.
\]
Finally we find that $\xi (B_n) / \mu (B_n) \to X\,$ $\P$-almost surely and in~$L^1(\P)$ by Lemma~\ref{csci-prop} \eqref{csci-prop-as} and~\eqref{csci-prop-l1}. This shows statement~\eqref{erg-change}.

In order to prove \eqref{erg-qeta} first note that for any $D \in \sS$ with $\mu(D) < \infty$ we may choose a $\mu$-sequence with constant increments $(E_n)$ such that $E_1 = D$. Applying Lemma~\ref{erg-ex-prop}~\eqref{erg-ex-prop-1a} to $(E_n)$ gives $\xi(D) = 0\,$ $\P$-almost surely on $\left\{ X = 0 \right\}$. Now define $\eta$ as in~\eqref{equ-eta-meas}. It remains to show that $\eta$ is $\mu$-exchangeable under~$\Q$. Let $d \in (0,\infty)$, $m \geq 1$, and $(B_j)_{1 \leq j \leq m}$ disjoint measurable sets with $\mu (B_j) = d$. Define $B = \bigcup_{j = 1}^m B_j$\,. Since $\mu (S \setminus B) = \infty$, we may choose a $\mu$-sequence $(C_n)$ with constant increments and limit in~\mbox{$S \setminus B$} such that $\mu (C_1) = d$. Statement~\eqref{erg-change} implies that \mbox{$\xi (C_n) / \mu (C_n) \to X\,$} $\P$-almost surely and in~$L^1(\P)$. Applying Lemma~\ref{erg-ex-prop} to the sequence $(C_n)$ and sets $(B_j)_{1 \leq j \leq m}$\,, we obtain that, under~$\Q$,
\[
\big( \eta (B_1), \ldots, \eta (B_m) \big) \,\deq\, \big( \eta (\Delta C_1), \ldots, \eta (\Delta C_m) \big) \,.
\]
Now let $\pi \in \perm(m)$. Deriving the same relation for the permuted sets, we get, under~$\Q$,
\[
\big( \eta (B_1), \ldots, \eta (B_m) \big) \,\deq\, \left( \eta \!\left( B_{\pi(1)} \right), \ldots, \eta \!\left( B_{\pi(m)} \right) \right) .
\]
By Lemma~\ref{exch-random}~\eqref{exch-random-perm} this shows that $\eta$ is $\mu$-exchangeable under~$\Q$.
\end{proof}

Note that if $X$ in Theorem~\ref{th-erg-conv}~\eqref{erg-change} is almost surely constant, then $\xi$ is $\mu$-exchangeable under~$\P$, similarly to \cite[Corollary 24]{mss:14} for random sequences. For easier comparison with existing results we give a variant of Theorem~\ref{th-erg-conv} for $\mu$-exchangeable random measures.

\begin{theorem}
\label{th-erg-conv-exch}
Let $(S,\sS, \mu)$ be an atomless measure space and $\xi$ a $\mu$-exchangeable random measure on~$S$.
\begin{benumber}
\item \label{erg-conv-exch} For each $\xi$-integrable $\mu$-sequence $(A_n)_{n \geq 1}$ there exists a random variable~$X$ such that \mbox{$\xi (A_n) / \mu (A_n) \to X$} almost surely and in $L^1$ as $n \to \infty$.
\item \label{erg-change-exch} Assume that $\mu$ is $\sigma$-finite and that there is $C \in \sS$ with $\mu(C) \in (0,\infty)$ and \mbox{$\E \, \xi (C) < \infty$}. Then the limit in~\eqref{erg-conv-exch} is unique for all $\mu$-sequences.
\end{benumber}
\end{theorem}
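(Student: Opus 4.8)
The plan is to handle the two parts in turn: part~\eqref{erg-conv-exch} by copying the CSCI reduction of Theorem~\ref{th-erg-conv}~\eqref{erg-conv} while gaining $L^1$-convergence from the exchangeable ergodic theorem, and part~\eqref{erg-change-exch} by a single application of Theorem~\ref{th-erg-conv}~\eqref{erg-change} to one well-chosen reference sequence.

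For~\eqref{erg-conv-exch} I would fix a $\xi$-integrable $\mu$-sequence $(A_n)$; its mere existence forces $\mu(S)=\infty$. By Lemma~\ref{csci-prop}~\eqref{csci-prop-allc} I choose a CSCI $(C_n)$ of $(A_n)$ with $\mu(C_1)=1$. Since $(A_n)$ and $(C_n)$ are subsequences of a common $\mu$-sequence, each $C_n$ lies in some $A_m$, whence $\E\,\xi(C_n)\le\E\,\xi(A_m)<\infty$ and $(C_n)$ is again $\xi$-integrable. The increments $\Delta C_n$ are disjoint of equal $\mu$-measure $1$, so Lemma~\ref{exch-random}~\eqref{exch-random-twosets} shows that $(\xi(\Delta C_n))_{n\ge1}$ is an exchangeable sequence, and it is integrable because $\E\,\xi(\Delta C_n)=\E\,\xi(\Delta C_1)<\infty$. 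Now \cite[Theorem 10.6]{kallenberg:fou} furnishes an integrable $X$ with $n^{-1}\sum_{j=1}^n\xi(\Delta C_j)\to X$ almost surely \emph{and} in $L^1$, i.e.\ $\xi(C_n)/\mu(C_n)\to X$ in both senses. Transferring to $(A_n)$ through Lemma~\ref{csci-prop}~\eqref{csci-prop-as} and~\eqref{csci-prop-l1} gives the asserted convergence. This is exactly where exchangeability buys more than swap-invariance: the exchangeable ergodic theorem delivers the $L^1$-statement that Proposition~\ref{lemma-erg} alone cannot.

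For~\eqref{erg-change-exch} I would first invoke Corollary~\ref{cor-int-exch} with the set $C$ to get $\E\,\xi=c\mu$ for some $c\in\R_+$; then $\E\,\xi$ is $\sigma$-finite, every $\mu$-sequence is $\xi$-integrable, and part~\eqref{erg-conv-exch} applies to all of them. If $c=0$ then $\xi(A)=0$ a.s.\ whenever $\mu(A)<\infty$, so every limit equals $0$ and there is nothing to prove; hence assume $c>0$. The crux is to manufacture a single reference sequence with infinite complement. Partitioning $S$ into unit-measure sets $(U_k)_{k\ge1}$ (possible since $\mu$ is atomless, $\sigma$-finite and $\mu(S)=\infty$) and setting $P_n=\bigcup_{k\le n}U_{2k-1}$, I obtain a $\mu$-sequence with constant increments whose limit $P$ satisfies $\mu(S\setminus P)=\sum_k\mu(U_{2k})=\infty$. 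By~\eqref{erg-conv-exch} there is an integrable $X$ with $\xi(P_n)/\mu(P_n)\to X$ a.s.\ and in $L^1$, and $\E_\P X=\lim_n\E_\P\big[\xi(P_n)/\mu(P_n)\big]=c>0$.

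Finally, every $\mu$-exchangeable random measure is $\mu$-swap-invariant, so $\xi$ meets all hypotheses of Theorem~\ref{th-erg-conv}~\eqref{erg-change} with $(P_n)$ as the distinguished sequence: $\mu$ and $\E_\P\xi$ are $\sigma$-finite, $(P_n)$ has constant increments with limit $P$ and $\mu(S\setminus P)=\infty$, $\xi(P_n)/\mu(P_n)\to X$ in $L^1(\P)$, and $\E_\P X>0$. That theorem then gives $\xi(B_n)/\mu(B_n)\to X$ in $L^1(\P)$ for \emph{every} $\mu$-sequence $(B_n)$; since this $L^1$-limit must agree with the a.s.\ limit from~\eqref{erg-conv-exch}, all $\mu$-sequences share the single a.s.\ limit $X$, which is the claimed uniqueness. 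The only subtle point—comparing two arbitrary, possibly overlapping $\mu$-sequences—never has to be faced directly here; it is already resolved inside the proof of Theorem~\ref{th-erg-conv}~\eqref{erg-change}, and I simply route every sequence through the one reference $(P_n)$ whose complement was arranged to have infinite measure. The main things to check are therefore the existence of such a $(P_n)$ with $\E_\P X>0$ and the $\sigma$-finiteness of $\E\,\xi$ supplied by Corollary~\ref{cor-int-exch}.
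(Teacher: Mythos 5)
Your proof is correct, and part~\eqref{erg-conv-exch} is exactly the paper's argument: the CSCI reduction via Lemma~\ref{csci-prop} plus the ergodic theorem for integrable exchangeable sequences. For part~\eqref{erg-change-exch}, however, you take a genuinely different route. The paper reduces to two disjoint unit-increment sequences and argues distributionally: writing $\xi_j = \xi(\Delta A_j)$, $\xi^\ast_j = \xi(\Delta B_j)$, and $g(x) = \limsup_n n^{-1}\sum_{j \leq n} x_j$, exchangeability gives $(\xi_1,\ldots,\xi_m,g(\xi)) \deq (\xi^\ast_1,\ldots,\xi^\ast_m,g(\xi))$ for every $m$ (swapping finitely many increments does not change $g$), hence $(\xi,g(\xi)) \deq (\xi^\ast,g(\xi))$, and \cite[Corollary 6.11]{kallenberg:fou} upgrades this distributional identity to $g(\xi)=g(\xi^\ast)$ almost surely; the overlapping case is then deferred to the argument of Theorem~\ref{th-erg-conv}. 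You instead treat $\xi$ as merely swap-invariant and invoke Theorem~\ref{th-erg-conv}~\eqref{erg-change} as a black box, after manufacturing a reference sequence $(P_n)$ with constant increments, complement of infinite measure, and $\E_\P X = c > 0$, with the case $c=0$ dispatched separately via Corollary~\ref{cor-int-exch}. Both arguments are valid and non-circular (Theorem~\ref{th-erg-conv} is established independently). What your route buys is economy: the delicate comparison of overlapping $\mu$-sequences is carried out once, inside Theorem~\ref{th-erg-conv}, and you never have to repeat it; the price is the case split on $c$ forced by the hypothesis $\E_\P X>0$ of Theorem~\ref{th-erg-conv}~\eqref{erg-change}. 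The paper's route avoids that positivity hypothesis altogether and isolates the purely distributional mechanism that makes uniqueness simpler for exchangeable measures, but it still leans on the same overlapping-sequence bookkeeping for the general case.
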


\begin{proof}
Statement~\eqref{erg-conv-exch} is a consequence of Lemma~\ref{csci-prop} and Birkhoff's ergodic theorem, see e.g.~\cite[Theorem 10.6]{kallenberg:fou}. In order to show~\eqref{erg-change-exch} we may assume that $\mu(S) = \infty$. Note that Corollary~\ref{cor-int-exch} applies, so that $\E \, \xi$ is $\sigma$-finite and each $\mu$-sequence is $\xi$-integrable. Clearly there exists a $\mu$-sequence $(A_n)_{n \geq 1}$ with unit increments and limit~$A$ such that $\mu(S \setminus A) = \infty$. By~\eqref{erg-conv-exch} there exists a random variable~$X$ such that \mbox{$\xi (A_n) / \mu (A_n) \to X$} almost surely and in $L^1$. Now let $(B_n)_{n \geq 1}$ be another $\mu$-sequence, say with limit~$B$. We may assume that $(B_n)$ has unit increments and that $A \cap B = \emptyset$. The general case is then proven as in Theorem~\ref{th-erg-conv}. Define random sequences $\xi_j = \xi (\Delta A_j)$ and $\xi^\ast_j = \xi (\Delta B_j)$ for $j \geq 1$, and $g(x) = \limsup_{n \to \infty} n^{-1} \sum_{j = 1}^n x_j\,$ for $x \in \overline{\R}^{\, \infty}_+$. By the $\mu$-exchangeability of~$\xi$
\[
\left( \xi_1\,, \ldots, \xi_m\,, \xi_{m+1}\,, \ldots \right) \,\deq\, \left( \xi^\ast_1\,, \ldots, \xi^\ast_m\,, \xi_{m+1}\,, \xi_{m+2}\,, \ldots \right),
\]
for $m \geq 1$, and consequently
\begin{eqnarray*}
\big( \xi_1\,, \ldots, \xi_m\,, g(\xi) \big) & \deq & \big( \xi^\ast_1\,, \ldots, \xi^\ast_m\,, g( \xi^\ast_1\,, \ldots, \xi^\ast_m\,, \xi_{m+1}\,, \xi_{m+2}\,, \ldots) \big)\\
 & = & \big( \xi^\ast_1\,, \ldots, \xi^\ast_m\,, g(\xi) \big) \quad \mbox{a.s.}
\end{eqnarray*}
It follows that $(\xi, g(\xi)) \deq (\xi^\ast, g(\xi))$. Now \cite[Corollary 6.11]{kallenberg:fou} implies that $g(\xi) = g(\xi^\ast)$ almost surely.
\end{proof}

\begin{example}
Let $\xi$ be a $\lambda$-exchangeable random measure on~$\R^d$ with \mbox{$\E \, \xi ( \left[0,1\right]^d) < \infty$} where $\lambda$ denotes the Lebesgue measure. It follows that $\xi$ is stationary (cf.~\cite[p.~189]{kallenberg:fou}). Hence the ergodic theorem \cite[Corollary 10.19]{kallenberg:fou}, which is based on~\cite{ngz:79}, implies that $\xi(A_n) / \lambda(A_n) \to X$  almost surely and in $L^1$ as $n \to \infty$ for a certain subclass of $\lambda$-sequences, namely all sequences of increasing bounded convex Borel sets $(A_n)_{n \geq 1}$ such that the inner radius $r(A_n) \to \infty$. Theorem~\ref{th-erg-conv-exch} generalizes existence and uniqueness to all $\lambda$-sequences.
\end{example}

\section*{Acknowledgement}

The author is very grateful to Ilya Molchanov for guidance through the topic and numerous fruitful discussions. This work was supported by Swiss National Science Foundation Grant 200021-153597.

\bibliographystyle{plain}
\bibliography{swap-invariance}

\begin{thebibliography}{1}

\bibitem{bogachev:mea}
Vladimir~I. Bogachev.
\newblock {\em Measure theory}, volume~2.
\newblock Springer, Berlin, 2007.

\bibitem{har:81}
Clyde~D. Hardin, Jr.
\newblock Isometries on subspaces of {$L^p$}.
\newblock {\em Indiana Univ. Math. J.}, 30(3):449--465, 1981.

\bibitem{kallenberg:fou}
Olav Kallenberg.
\newblock {\em Foundations of modern probability}.
\newblock Springer, New York, second edition, 2002.

\bibitem{kallenberg:sym}
Olav Kallenberg.
\newblock {\em Probabilistic symmetries and invariance principles}.
\newblock Springer, 2005.

\bibitem{mss:14}
Ilya Molchanov, Michael Schmutz, and Kaspar Stucki.
\newblock Invariance properties of random vectors and stochastic processes
  based on the zonoid concept.
\newblock {\em Bernoulli}, 20(3):1210--1233, 2014.

\bibitem{ngz:79}
Xuan~Xanh Nguyen and Hans Zessin.
\newblock Ergodic theorems for spatial processes.
\newblock {\em Z. Wahrsch. verw. Geb.}, 48:133--158, 1979.

\bibitem{walters:erg}
Peter Walters.
\newblock {\em An introduction to ergodic theory}.
\newblock Springer, New York, 1982.

\end{thebibliography}

\end{document}